\theoremstyle{plain}
\newtheorem{theorem}[equation]{Theorem}
\newtheorem{lemma}[equation]{Lemma}
\theoremstyle{definition}
\newtheorem{definition}[equation]{Definition}
\theoremstyle{remark}
\newtheorem{remark}[equation]{Remark}
\numberwithin{equation}{section}
\newcommand{\R}{\mathcal{R}}
\renewcommand{\P}{\mathcal{P}}
\def\mean#1{\mathchoice%
          {\mathop{\kern 0.2em\vrule width 0.6em height 0.69678ex depth -0.58065ex
                  \kern -0.8em \intop}\nolimits_{\kern -0.4em#1}}%
          {\mathop{\kern 0.1em\vrule width 0.5em height 0.69678ex depth -0.60387ex
                  \kern -0.6em \intop}\nolimits_{#1}}%
          {\mathop{\kern 0.1em\vrule width 0.5em height 0.69678ex
              depth -0.60387ex
                  \kern -0.6em \intop}\nolimits_{#1}}%
          {\mathop{\kern 0.1em\vrule width 0.5em height 0.69678ex depth -0.60387ex
                  \kern -0.6em \intop}\nolimits_{#1}}}
\renewcommand{\emptyset}{\mbox{\textup{\O}}}
\def\div{\mathop{\operatorname{div}}\nolimits}
\begin{document}
\title[Boundedness of parabolic layer potentials]{Boundedness of single layer potentials associated to  divergence form parabolic equations with complex coefficients}

\address{Alejandro J. Castro\\Department of Mathematics, Uppsala University\\
S-751 06 Uppsala, Sweden}
\email{alejandro.castro@math.uu.se}
\address{Kaj Nystr\"{o}m\\Department of Mathematics, Uppsala University\\
S-751 06 Uppsala, Sweden}
\email{kaj.nystrom@math.uu.se}
\address{Olow Sande\\Department of Mathematics, Uppsala University\\
S-751 06 Uppsala, Sweden}
\email{olow.sande@math.uu.se}

\author{A. J.  Castro, K. Nystr{\"o}m, O. Sande}

\maketitle
\begin{abstract}
\noindent\medskip
We consider parabolic operators of the form $$\partial_t+\mathcal{L},\ \mathcal{L}:=-\mbox{div}\, A(X,t)\nabla,$$ in
$\mathbb R_+^{n+2}:=\{(X,t)=(x,x_{n+1},t)\in \mathbb R^{n}\times \mathbb R\times \mathbb R:\ x_{n+1}>0\}$, $n\geq 1$. We assume that $A$ is a $(n+1)\times (n+1)$-dimensional matrix which is bounded, measurable, uniformly elliptic and complex, and we assume, in addition, that the entries of A are independent of the spatial coordinate $x_{n+1}$ as well as of the time coordinate $t$. We prove that the boundedness of associated single layer potentials, with data in $L^2$, can be reduced to two crucial estimates (Theorem \ref{th0}), one being a square function estimate involving the single layer potential. By establishing a local parabolic Tb-theorem for square functions we are then able to verify the two  crucial estimates in the case of real, symmetric operators (Theorem \ref{th2}). As part of this argument we establish a scale-invariant reverse H{\"o}lder inequality for the parabolic Poisson kernel (Theorem \ref{parabolicm}).  Our results are important when addressing the solvability of the classical Dirichlet, Neumann and Regularity
problems for the operator $\partial_t+\mathcal{L}$ in $\mathbb R_+^{n+2}$, with $L^2$-data on $\mathbb R^{n+1}=\partial\mathbb R_+^{n+2}$, and by way of layer potentials.\\

\noindent
2010  {\em Mathematics Subject Classification: 35K20, 31B10}
\noindent

\medskip

\noindent
{\it Keywords and phrases: second order parabolic operator, complex coefficients, boundary value problems, layer potentials.}
\end{abstract}

    \section{Introduction and statement of main results}

    In this paper  we establish certain estimates related to the solvability of the Dirichlet, Neumann and Regularity
problems with data in $L^2$, in the following these problems are referred to as $(D2)$, $(N2)$ and $(R2)$,  by way of layer potentials and for second order parabolic equations of the form
    \begin{eqnarray}\label{eq1}
    \mathcal{H}u:=(\partial_t+\mathcal{L})u = 0,
    \end{eqnarray}
     where
    \begin{eqnarray*}
    \mathcal{L}:=-\mbox{div }A(X,t)\nabla =-\sum_{i,j=1}^{n+1}\partial_{x_i}(A_{i,j}(X,t)\partial_{x_j})
    \end{eqnarray*}
    is defined in $\mathbb R^{n+2}=\{(X,t)=(x_1,..,x_{n+1},t)\in \mathbb R^{n+1}\times\mathbb R\}$, $n\geq 1$. $A=A(X,t)=\{A_{i,j}(X,t)\}_{i,j=1}^{n+1}$ is assumed to be a $(n+1)\times (n+1)$-dimensional matrix with complex coefficients
    satisfying the uniform ellipticity condition
     \begin{eqnarray}\label{eq3}
     (i)&&\Lambda^{-1}|\xi|^2\leq \mbox{Re }\bigl (\sum_{i,j=1}^{n+1} A_{i,j}(X,t)\xi_i\bar\xi_j\bigr ),\notag\\
     (ii)&& |A\xi\cdot\zeta|\leq \Lambda|\xi||\zeta|,
    \end{eqnarray}
    for some $\Lambda$, $1\leq\Lambda<\infty$, and for all $\xi,\zeta\in \mathbb C^{n+1}$, $(X,t)\in\mathbb R^{n+2}$. Here $u\cdot v=u_1v_1+...+u_{n+1}v_{n+1}$,
    $\bar u$ denotes the complex conjugate of $u$ and $u\cdot \bar v$ is the (standard) inner product on $\mathbb C^{n+1}$. In addition, we consistently assume that
     \begin{eqnarray}\label{eq4}
A(x_1,..,x_{n+1},t)=A(x_1,..,x_{n}),\ \mbox{i.e., $A$ is independent of $x_{n+1}$ and $t$}.
    \end{eqnarray}
The solvability of $(D2)$, $(N2)$ and $(R2)$ for the operator $\mathcal{H}$ in $\mathbb R^{n+2}_+=\{(x,x_{n+1},t)\in \mathbb R^{n}\times\mathbb R\times\mathbb R:\ x_{n+1}>0\}$,  with data prescribed
on  $\mathbb R^{n+1}=\partial \mathbb R^{n+2}_+=\{(x,x_{n+1},t)\in \mathbb R^{n}\times\mathbb R\times\mathbb R:\ x_{n+1}=0\}$ and by way of layer potentials, can roughly be decomposed into two steps: boundedness of layer potentials and invertibility of layer potentials. In this paper we first prove, in the case of equations of the form \eqref{eq1}, satisfying \eqref{eq3}-\eqref{eq4} and the De Giorgi-Moser-Nash estimates stated in \eqref{eq14+}-\eqref{eq14++} below, that a set of key  boundedness estimates for associated single layer potentials can be reduced to two crucial estimates (Theorem \ref{th0}), one being a square function estimate involving the single layer potential. By establishing a local parabolic Tb-theorem for square functions, and by establishing a version of the main result in \cite{FS} for equations of the form \eqref{eq1}, assuming in addition that $A$ is real and symmetric,  we are then subsequently able to verify the two  crucial estimates in the case of real, symmetric operators \eqref{eq1} satisfying \eqref{eq3}-\eqref{eq4} (Theorem \ref{th2}). As part of this argument we establish, and this is of independent interest, a scale-invariant reverse H{\"o}lder inequality for the parabolic Poisson kernel (Theorem \ref{parabolicm}). The invertibility of layer potentials, and hence the solvability of the Dirichlet, Neumann and Regularity
problems $L^2$-data, is addressed in \cite{N1}.

Jointly, this paper and \cite{N1} yield solvability for $(D2)$, $(N2)$ and $(R2)$, by way of layer potentials,  when the coefficient matrix is  either
     \begin{eqnarray*}
       (i)&&\mbox{a small complex perturbation of a constant (complex) matrix, or}\notag\\
       (ii)&&\mbox{a real and symmetric matrix, or}\notag\\
       (iii)&&\mbox{a small complex perturbation of a real and symmetric matrix}.
       \end{eqnarray*}
In all cases the unique solutions can be represented
 in terms of layer potentials.  We claim that the results established in this paper and in \cite{N1}, and the tools developed, pave the way for important developments in the area of parabolic PDEs. In particular, it is interesting to generalize the present paper and \cite{N1} to the context of $L^p$ and relevant endpoint spaces, and to  challenge the assumption in \eqref{eq4}.

 The main results of this paper and \cite{N1} can jointly be seen as a parabolic analogue of the elliptic results established in \cite{AAAHK} and we recall that in \cite{AAAHK} the authors establish results concerning the solvability of the Dirichlet, Neumann and Regularity
problems with data in $L^2$, i.e.,  $(D2)$, $(N2)$ and $(R2)$, by way of  layer potentials and for elliptic operators of the form $-\mbox{div}\, A(X)\nabla,$ in
$\mathbb R_+^{n+1}:=\{X=(x,x_{n+1})\in \mathbb R^{n}\times \mathbb R:\ x_{n+1}>0\}$, $n\geq 2$, assuming  that $A$ is a $(n+1)\times (n+1)$-dimensional matrix which is bounded, measurable, uniformly elliptic and complex, and assuming, in addition, that the entries of $A$ are independent of the spatial coordinate $x_{n+1}$. Moreover, if $A$ is real and symmetric,  $(D2)$, $(N2)$ and $(R2)$ were solved in \cite{JK}, \cite{KP}, \cite{KP1}, but the major achievement in \cite{AAAHK} is that the authors prove that the solutions can be represented by way of  layer potentials. In \cite{HMM} a version of
 \cite{AAAHK}, but in the context of $L^p$ and relevant endpoint spaces, was developed and in \cite{HMaMi} the structural assumption that $A$ is independent of the spatial coordinate $x_{n+1}$ is challenged. The core of the impressive arguments and estimates in \cite{AAAHK} is based on the fine and elaborated techniques developed in the context of the proof of the Kato conjecture, see \cite{AHLMcT} and \cite{AHLeMcT}, \cite{HLMc}.

 \subsection{Notation}\label{nota}  Based on \eqref{eq4} we let $\lambda=x_{n+1}$, and when using the symbol
    $\lambda$  we will write the point $(X,t)=(x_1,..,x_{n},x_{n+1},t)$ as $ (x,t, \lambda)=(x_1,..,x_{n},t,\lambda)$. Using this notation,  $$\mathbb R^{n+2}_+=\{(x,t,\lambda)\in \mathbb R^{n}\times\mathbb R\times\mathbb R:\ \lambda>0\},$$ and  $$\mathbb R^{n+1}=\partial\mathbb R^{n+2}_+=\{(x,t,\lambda)\in \mathbb R^{n}\times\mathbb R\times\mathbb R:\ \lambda=0\}.$$
    We write $\nabla :=(\nabla_{||},\partial_\lambda)$ where $\nabla_{||}:=(\partial_{x_1},...,\partial_{x_n})$. We let $L^2(\mathbb R^{n+1},\mathbb C)$ denote the  Hilbert space of functions $f:\mathbb R^{n+1}\to \mathbb C$ which are square integrable and we let $||f||_2$ denote the norm of $f$. We also introduce   \begin{eqnarray}\label{keyestint-ex+-}|||\cdot|||:=\biggl (\int_0^\infty\int_{\mathbb R^{n+1}}|\cdot|^2\, \frac{dxdtd\lambda}\lambda\biggr )^{1/2}.
      \end{eqnarray}
      Given $(x,t)\in\mathbb R^{n}\times\mathbb R$ we let $\|(x,t)\|$ be the unique positive
solution $\rho$ to the equation
\begin{eqnarray*}
	\frac{t^2}{\rho^4}+\sum\limits^{n}_{i=1}\frac{x^2_i}{\rho^2}=1.
\end{eqnarray*}
Then $\|(\gamma x,\gamma^2t)\|=\gamma\|(x,t)\|$, $\gamma>0$, and we call $\|(x,t)\|$
the parabolic norm of $(x,t)$. We define the parabolic first order differential operator $\mathbb D$
through the relation
\begin{eqnarray*}
	\widehat{(\mathbb D f)}(\xi,\tau):=\|(\xi,\tau)\|\hat{f}(\xi,\tau),
\end{eqnarray*}
where $\widehat{(\mathbb D f)}$ and $\hat f$ denote the Fourier transform of $\mathbb D f$ and $f$, respectively. We define the
fractional (in time) differentiation operators $D_{1/2}^t$ through the relation
\begin{eqnarray*}
	\widehat {(D_{1/2}^t f)}(\xi,\tau):=|\tau|^{1/2}\hat{f}(\xi,\tau).
\end{eqnarray*}
We let $H_t$ denote a Hilbert transform in the $t$-variable defined through the multiplier $i\mbox{sgn}({\tau})$. We make the construction so that $$\partial_t=D_{1/2}^tH_tD_{1/2}^t.$$
 By applying
Plancherel's theorem  we have
\begin{eqnarray*}
\|\mathbb D f\|_{2}\approx\|\nabla_{||}  f\|_2+\|H_tD^t_{1/2}f\|_2,\end{eqnarray*}
with constants depending only on $n$.

\subsection{Non-tangential maximal functions} Given $(x_0,t_0)\in \mathbb R^{n+1}$, and $\beta>0$, we define the cone
    $$\Gamma^\beta(x_0,t_0):=\{(x,t,\lambda)\in\mathbb R^{n+2}_+:\ ||(x-x_0,t-t_0)||<\beta\lambda\}.$$
    Consider a function $U$ defined on $\mathbb R^{n+2}_+$. The non-tangential maximal operator $N_\ast^\beta$ is defined
     \begin{eqnarray*}
           N_{\ast}^\beta(U)(x_0,t_0):=\sup_{(x,t,\lambda)\in \Gamma^\beta(x_0,t_0)}\ |U(x,t,\lambda)|.
    \end{eqnarray*}
    Given $(x,t)\in\mathbb R^{n+1}$, $\lambda>0$, we let
    \begin{eqnarray*}
    		Q_\lambda(x,t):=\{(y,s):\ |x_i-y_i|<\lambda,\ |t-s|<\lambda^2\}
    \end{eqnarray*}
    denote the parabolic cube on $\mathbb R^{n+1}$, with center $(x,t)$ and side length $\lambda$. We let
    $$W_\lambda(x,t):=\{(y,s,\sigma):\ (y,s)\in Q_\lambda(x,t),\lambda/2<\sigma<3\lambda/2\}$$
    be an associated Whitney type set. Using this notation we also introduce
           \begin{eqnarray*}
           \tilde N_\ast^\beta(U)(x_0,t_0):=\sup_{(x,t,\lambda)\in \Gamma^\beta(x_0,t_0)}\biggl (
           \mean{W_\lambda(x,t)}|U(y,s,\sigma)|^2\, dydsd\sigma\biggr )^{1/2}.
    \end{eqnarray*}
  We let
\begin{eqnarray*}
           \Gamma(x_0,t_0):=\Gamma^1(x_0,t_0),\ N_{\ast}(U):=N_{\ast}^1(U),\ \tilde N_{\ast}(U):=\tilde N_{\ast}^1(U).
    \end{eqnarray*}
    Furthermore, in many estimates it is necessary to increase the $\beta$ in $\Gamma^\beta$  as the estimate progresses. We will use the convention, when the exact $\beta$ is not important, that $N_{\ast\ast}(U)$, $\tilde N_{\ast\ast}(U)$, equal $N_{\ast}^\beta(U)$, $\tilde N_{\ast}^\beta(U)$, for some $\beta>1$. In fact, the $L^p$-norms of $N_{\ast}$ and $N_{\ast}^\beta$ are equivalent,
for any $\beta>0$ (see for example \cite[Lemma 1, p. 166]{FeSt}).

\subsection{Single layer potentials} Consider $\mathcal{H}=\partial_t+\mathcal{L}=\partial_t-\div A\nabla$. Assume that $\mathcal{H}$,  $\mathcal{H}^\ast$,   satisfy \eqref{eq3}-\eqref{eq4}. Then $\mathcal{L}=-\div A\nabla$ defines, recall that $A$ is independent of $t$, a maximal accretive operator on $L^2(\mathbb R^{n+1},\mathbb C)$ and $-\mathcal{L}$ generates a contraction semigroup on $L^2(\mathbb R^{n+1},\mathbb C)$, $e^{-t\mathcal{L}}$, for $t>0$, see p.28 in \cite{AT}.  Let $K_t(X,Y)$ denote the distributional or Schwartz kernel of $e^{-t\mathcal{L}}$. In the statement of our main results, and hence throughout the paper, we will assume, in addition to \eqref{eq3}-\eqref{eq4}, that $\mathcal{H}$,  $\mathcal{H}^\ast$,  both satisfy  De Giorgi-Moser-Nash estimates stated in \eqref{eq14+}-\eqref{eq14++} below. This assumption implies, in particular,  that $K_t(X,Y)$ is, for each $t>0$, H{\"o}lder continuous in $X$ and $Y$ and that $K_t(X,Y)$ satisfies the Gaussian (pointwise) estimates stated in Definition 2 on p.29 in \cite{AT}. Under these assumptions we introduce
\begin{eqnarray*}
\Gamma(x,t,\lambda,y,s,\sigma):=\Gamma^{\mathcal{H}}(X,t,Y,s):=K_{t-s}(X,Y)=K_{t-s}(x,\lambda,y,\sigma)
\end{eqnarray*}
whenever $t-s>0$ and we put $\Gamma(x,t,\lambda,y,s,\sigma)=0$ whenever $t-s<0$. Then $\Gamma(x,t,\lambda,y,s,\sigma)$, for $(x,t,\lambda)$, $(y,s,\sigma)\in \mathbb R^{n+2}$ is a  fundamental solution, heat kernel, associated to the operator $\mathcal{H}$.   We let
$$\Gamma^\ast(y,s,\sigma,x,t,\lambda):=\overline{\Gamma(x,t,\lambda,y,s,\sigma)}$$ denote the fundamental solution associated to  $\mathcal{H}^\ast:=-\partial_t+\mathcal{L}^\ast$, where $\mathcal{L}^\ast$ is the hermitian adjoint of $\mathcal{L}$, i.e., $\mathcal{L}^\ast=-\div A^\ast\nabla$.  Based on \eqref{eq4} we in the following  let
                    \begin{eqnarray*}
                    \Gamma_\lambda(x,t,y,s)&:=&\Gamma(x,t,\lambda,y,s,0),\notag\\
                    \Gamma_\lambda^\ast(y,s,x,t)&:=&\Gamma^\ast(y,s,0,x,t,\lambda),
    \end{eqnarray*}
    and we introduce associated single  layer potentials
    \begin{eqnarray*}
    \mathcal{S}_\lambda^{\mathcal{H}} f(x,t)&:=&\int_{\mathbb R^{n+1}}\Gamma_\lambda(x,t,y,s)f(y,s)\, dyds,\notag\\
     \mathcal{S}_\lambda^{\mathcal{H}^\ast} f(x,t)&:=&\int_{\mathbb R^{n+1}}\Gamma_\lambda^\ast(y,s,x,t)f(y,s)\, dyds.
    \end{eqnarray*}

    \subsection{Statement of main results}  The following are our main results.

\begin{theorem}\label{th0} Consider $\mathcal{H}=\partial_t-\div A\nabla$. Assume that $\mathcal{H}$,  $\mathcal{H}^\ast$,   satisfy \eqref{eq3}-\eqref{eq4} as well as the De Giorgi-Moser-Nash estimates stated in \eqref{eq14+}-\eqref{eq14++} below.  Assume that there exists a constant $C $ such that
     \begin{eqnarray}\label{keyestint-}
     (i)&&\sup_{\lambda> 0}||\partial_\lambda \mathcal{S}_{\lambda}^{\mathcal{H}}f||_2+\sup_{\lambda> 0}||\partial_\lambda \mathcal{S}_{\lambda}^{\mathcal{H}^\ast}f||_2\leq C ||f||_2,\notag\\
     (ii)&&|||\lambda\partial_\lambda^2 \mathcal{S}_{\lambda}^{\mathcal{H}}f|||+|||\lambda\partial_\lambda^2 \mathcal{S}_{\lambda}^{\mathcal{H}^\ast}f|||\leq  C ||f||_2,
     \end{eqnarray}
     whenever $f\in L^2(\mathbb R^{n+1},\mathbb C)$. Then there exists a constant $c$, depending at most
     on $n$, $\Lambda$,  the De Giorgi-Moser-Nash constants and $C $, such that
     \begin{eqnarray}\label{keyestint+a}
(i)&& ||N_\ast(\partial_\lambda \mathcal{S}_\lambda^{\mathcal{H}} f)||_2+||N_\ast(\partial_\lambda \mathcal{S}_\lambda^{\mathcal{H}^\ast} f)||_2\leq c||f||_2,\notag\\
  (ii)&&\sup_{\lambda>0}||\mathbb D\mathcal{S}_{\lambda}^{\mathcal{H}}f||_{2}+\sup_{\lambda>0}||\mathbb D\mathcal{S}_{\lambda}^{\mathcal{H}^\ast} f||_{2}\leq c||f||_2,\notag\\
(iii)&&||\tilde N_\ast(\nabla_{||}\mathcal{S}_\lambda^{\mathcal{H}} f)||_2+||\tilde N_\ast(\nabla_{||}\mathcal{S}_\lambda^{\mathcal{H}^\ast} f)||_2\leq c||f||_2,\notag\\
(iv)&&||\tilde N_\ast(H_tD_{1/2}^t\mathcal{S}_\lambda^{\mathcal{H}} f)||_2+||\tilde N_\ast(H_tD_{1/2}^t\mathcal{S}_\lambda^{\mathcal{H}^\ast} f)||_2\leq  c||f||_2,
\end{eqnarray}
whenever  $f\in L^2(\mathbb R^{n+1},\mathbb C)$. \end{theorem}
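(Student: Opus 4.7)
Since \eqref{keyestint-} is symmetric in $\mathcal H$ and $\mathcal H^\ast$, it suffices to prove each statement in \eqref{keyestint+a} for $\mathcal{S}^{\mathcal H}_\lambda$. The plan is to bootstrap from the two hypothesized bounds using three ingredients: the De Giorgi--Moser--Nash estimates (which convert pointwise values of $\mathcal H$-solutions to $L^2$-averages over parabolic Whitney regions $W_\lambda(x,t)$); the telescoping identity
\[
\partial_\lambda\mathcal{S}^{\mathcal H}_\lambda f(y,s) = \partial_\mu\mathcal{S}^{\mathcal H}_\mu f(y,s) - \int_\lambda^\mu \partial_\sigma^2\mathcal{S}^{\mathcal H}_\sigma f(y,s)\,d\sigma;
\]
and the PDE $\mathcal{H}\mathcal{S}^{\mathcal H}_\lambda f=0$, which, because $A$ is independent of $(x_{n+1},t)$, takes the form $A_{n+1,n+1}\partial_\lambda^2 \mathcal{S}^{\mathcal H}_\lambda f = \partial_t\mathcal{S}^{\mathcal H}_\lambda f - \mathrm{div}_{||}(A_{||}\nabla_{||}\mathcal{S}^{\mathcal H}_\lambda f) + (\mbox{mixed tangential/}\partial_\lambda\mbox{ terms})$ and thus lets us trade $\partial_\lambda^2$ for combinations of tangential derivatives of $\mathcal{S}^{\mathcal H}_\lambda f$. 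The four conclusions are addressed in order (i), (ii), (iii), (iv); conclusion (ii) is the main obstacle.

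\textbf{Conclusion (i).} Fix $(x_0,t_0)$ and $(x,t,\lambda)\in\Gamma(x_0,t_0)$. Apply Moser's estimate to the $\mathcal H$-solution $\partial_\lambda\mathcal{S}^{\mathcal H}_\lambda f$ to dominate $|\partial_\lambda\mathcal{S}^{\mathcal H}_\lambda f(x,t)|^2$ by the $L^2$-average of $|\partial_\sigma\mathcal{S}^{\mathcal H}_\sigma f(y,s)|^2$ over $W_\lambda(x,t)$. Apply the telescoping identity with $\mu\approx 2\max(\lambda,\|(x-x_0,t-t_0)\|)$, estimate the integral by Cauchy--Schwarz in the weighted form $d\sigma'/\sigma'$ against $|\sigma'\partial_{\sigma'}^2\mathcal{S}^{\mathcal H}_{\sigma'}f|$, and then take $L^2(dx_0\,dt_0)$. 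After a further averaging of $\mu$ over a Whitney interval (which absorbs the resulting logarithmic factor), the hypothesis \eqref{keyestint-}(i) controls the endpoint term and \eqref{keyestint-}(ii) controls the tent-type square-function term.

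\textbf{Conclusion (ii), the main obstacle.} We need $\sup_\lambda\|\mathbb{D}\mathcal{S}_\lambda^{\mathcal H}f\|_2\lesssim\|f\|_2$. Since $\mathbb{D}\mathcal{S}^{\mathcal H}_\lambda f\to 0$ as $\lambda\to\infty$ (consequence of heat-kernel decay and \eqref{keyestint-}), it suffices to control $\frac{d}{d\lambda}\|\mathbb{D}\mathcal{S}^{\mathcal H}_\lambda f\|_2^2$ integrably in $\lambda$. Commuting $\mathbb{D}$ past $\partial_\lambda$ (legitimate since $\mathbb D$ acts only in $(x,t)$),
\[
\frac{d}{d\lambda}\|\mathbb{D}\mathcal{S}^{\mathcal H}_\lambda f\|_2^2 = 2\,\mathrm{Re}\int_{\mathbb R^{n+1}} \mathbb{D}\mathcal{S}^{\mathcal H}_\lambda f\cdot\overline{\mathbb{D}\partial_\lambda\mathcal{S}^{\mathcal H}_\lambda f}\,dx\,dt.
\]
Using Plancherel (the multiplier $\|(\xi,\tau)\|$ is real and positive) to pass one factor of $\mathbb{D}$ onto the other side, the integrand becomes a pairing of $\overline{\partial_\lambda\mathcal{S}^{\mathcal H}_\lambda f}$ against $\mathbb{D}^2\mathcal{S}^{\mathcal H}_\lambda f$. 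The PDE identity then expresses $\mathbb{D}^2\mathcal{S}^{\mathcal H}_\lambda f$, modulo bounded tangential operators, in terms of $\partial_\lambda^2\mathcal{S}^{\mathcal H}_\lambda f$, $\nabla_{||}\partial_\lambda\mathcal{S}^{\mathcal H}_\lambda f$, and $\partial_t\mathcal{S}^{\mathcal H}_\lambda f=D_{1/2}^tH_tD_{1/2}^t\mathcal{S}^{\mathcal H}_\lambda f$. After integration by parts in $(x,t)$ and use of ellipticity to absorb tangential-squared terms on the left, one obtains, schematically,
\[
\Bigl|\frac{d}{d\lambda}\|\mathbb{D}\mathcal{S}_\lambda^{\mathcal H}f\|_2^2\Bigr|\lesssim \lambda^{-1}\|\partial_\lambda\mathcal{S}_\lambda^{\mathcal H}f\|_2\cdot\|\lambda\partial_\lambda^2\mathcal{S}_\lambda^{\mathcal H}f\|_2 + (\mbox{absorbable terms}),
\]
which upon integration in $\lambda$ and Cauchy--Schwarz against $d\lambda/\lambda$ is controlled by \eqref{keyestint-}(i) and \eqref{keyestint-}(ii). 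The most delicate feature is the non-local fractional derivative $D_{1/2}^t$; its handling relies crucially on the $t$-independence of $A$, which allows $D_{1/2}^t$ to commute with $\mathcal L$.

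\textbf{Conclusions (iii) and (iv).} With the uniform bounds from (ii) in hand, rerun the Whitney-averaging/telescoping argument of Conclusion (i) with $\nabla_{||}\mathcal{S}_\lambda^{\mathcal H}f$, respectively $H_tD_{1/2}^t\mathcal{S}_\lambda^{\mathcal H}f$, in place of $\partial_\lambda\mathcal{S}_\lambda^{\mathcal H}f$. The modified non-tangential operator $\tilde N_\ast$ enters naturally, since these quantities are controlled only in $L^2$-average on Whitney regions and not pointwise. The new square-function pieces involve $\partial_\sigma\nabla_{||}\mathcal{S}_\sigma^{\mathcal H}f = \nabla_{||}\partial_\sigma\mathcal{S}_\sigma^{\mathcal H}f$ and $\partial_\sigma H_tD_{1/2}^t\mathcal{S}_\sigma^{\mathcal H}f$; each is estimated by a Caccioppoli-type inequality for $\mathcal H$-solutions, which converts a tangential (respectively fractional-in-time) derivative at height $\sigma$ into a local $L^2$-average of $\partial_{\sigma'}$-derivatives over $\sigma'\approx\sigma$, thereby reducing the estimate to \eqref{keyestint-}(ii). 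The uniform bound from (ii) supplies the endpoint term and closes the argument.
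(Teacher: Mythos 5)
Your outline for conclusions (iii) and (iv) is broadly in the spirit of the paper's Lemma \ref{lemsl1++}, but the arguments you sketch for (i) and, above all, for (ii) contain genuine gaps.

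For conclusion (ii), the step ``the PDE identity then expresses $\mathbb D^2\mathcal{S}^{\mathcal H}_\lambda f$, modulo bounded tangential operators, in terms of $\partial_\lambda^2$, $\nabla_{||}\partial_\lambda$ and $\partial_t$'' is where the entire difficulty of the theorem is hidden, and as stated it is false. The operator $\mathbb D^2$ is the constant-coefficient operator with symbol $\|(\xi,\tau)\|^2\approx|\xi|^2+|\tau|$, while the equation only gives you access to the variable-coefficient form $\partial_t-\div_{||}A_{||}\nabla_{||}$. Passing between $\|\mathbb D u\|_2^2$ and the sesquilinear form of $\mathcal{H}_{||}$ in a way that permits absorption is precisely the (parabolic) Kato square root problem for $\partial_t+\mathcal{L}_{||}$; there is no elementary integration-by-parts/ellipticity absorption that accomplishes it. The paper instead proves $\sup_\lambda\|\mathbb D\mathcal{S}_\lambda f\|_2\lesssim\Phi(f)+\|f\|_2$ by a duality argument using the parabolic Hodge decomposition (Lemma \ref{parahodge}), the resolvents $\mathcal{E}_\lambda=(I+\lambda^2\mathcal{H}_{||})^{-1}$, the square function bound $|||\lambda\mathcal{E}_\lambda\mathcal{H}_{||}f|||\lesssim\|\mathbb Df\|_2$ imported from \cite{N}, and the Carleson measure estimate for $\mathcal{U}_\lambda A_{n+1}^{||}$ combined with the non-tangential maximal function from conclusion (i); none of this machinery is replaceable by the absorption you propose. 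Moreover, even granting your schematic differential inequality, integrating it and applying Cauchy--Schwarz against $d\lambda/\lambda$ produces the factor $\bigl(\int_0^\infty\|\partial_\lambda\mathcal{S}_\lambda f\|_2^2\,\tfrac{d\lambda}{\lambda}\bigr)^{1/2}$, which is \emph{not} controlled by $\sup_\lambda\|\partial_\lambda\mathcal{S}_\lambda f\|_2$ (the measure $d\lambda/\lambda$ has infinite mass, and $\partial_\lambda\mathcal{S}_\lambda f$ does not vanish as $\lambda\to0$).

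The same divergence defeats your argument for conclusion (i). After Moser and telescoping with $\mu$ in a Whitney interval about $\lambda$, the endpoint term is still a supremum over $\lambda$ of Whitney averages of $|\partial_\mu\mathcal{S}_\mu f|^2$, i.e.\ essentially $\tilde N_\ast(\partial_\lambda\mathcal{S}_\lambda f)$ again --- the argument is circular; and if instead you telescope to a fixed scale, Cauchy--Schwarz against $d\sigma/\sigma$ forces the divergent quantity $\int_0^1\|\partial_\sigma\mathcal{S}_\sigma f\|_2^2\,\tfrac{d\sigma}{\sigma}$ (or an unbounded logarithm). A uniform-in-$\lambda$ $L^2$ bound does not pass inside the nontangential supremum by soft means. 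The paper's proof of Lemma \ref{lemsl1++} $(i)$ treats $\partial_\lambda\mathcal{S}_\lambda$ as a family of parabolic Calder\'on--Zygmund operators (via Lemma \ref{le2+}) and runs a Cotlar-type inequality for the associated maximal truncated singular integral, using the weak $(1,1)$ theory; that, not telescoping, is what converts $\sup_\lambda\|\partial_\lambda\mathcal{S}_\lambda\|_{2\to2}<\infty$ into $\|N_\ast(\partial_\lambda\mathcal{S}_\lambda f)\|_2\lesssim\|f\|_2$.
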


 \begin{theorem}\label{th2} Consider $\mathcal{H}=\partial_t-\div A\nabla$. Assume that $\mathcal{H}$ satisfies \eqref{eq3}-\eqref{eq4}. Assume in addition that
 $A$ is real and symmetric. Then there exists a constant $C $, depending at most
     on $n$, $\Lambda$, such that  \eqref{keyestint-} holds with this $C $. In particular, the estimates in \eqref{keyestint+a} all hold, with constants depending only
     on $n$, $\Lambda$, $C $, in the case when
     $A$ is real, symmetric and satisfies \eqref{eq3}-\eqref{eq4}.
     \end{theorem}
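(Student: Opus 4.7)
The plan is to reduce the four estimates in \eqref{keyestint-} to a single square function estimate for $\mathcal{S}_{\lambda}^{\mathcal{H}}$, and then to verify that estimate via a local parabolic Tb-theorem whose pseudo-accretive test functions are built from solutions of $\mathcal{H}u=0$ normalized using the parabolic Poisson kernel.

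First, observe that when $A$ is real and symmetric one has $\mathcal{L}^{\ast}=\mathcal{L}$, so $\mathcal{H}^{\ast}=-\partial_{t}-\dv A\nabla$ is simply the time-reversal of $\mathcal{H}$. The change of variables $(x,t,\lambda)\mapsto(x,-t,\lambda)$ preserves $\mathbb{R}^{n+2}_{+}$, the $L^{2}$-norm, $\partial_{\lambda}$, and the parabolic square function norm $|||\cdot|||$, and it interchanges $\Gamma_{\lambda}$ and $\Gamma_{\lambda}^{\ast}$. Consequently the two estimates for $\mathcal{S}_{\lambda}^{\mathcal{H}^{\ast}}$ in \eqref{keyestint-} follow from the corresponding estimates for $\mathcal{S}_{\lambda}^{\mathcal{H}}$, and we may focus on the operator $\mathcal{H}$ alone. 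Moreover, the pointwise identity
\begin{equation*}
\partial_{\lambda}\mathcal{S}_{\lambda}^{\mathcal{H}}f\;-\;\partial_{\mu}\mathcal{S}_{\mu}^{\mathcal{H}}f\;=\;\int_{\mu}^{\lambda}\partial_{\sigma}^{2}\mathcal{S}_{\sigma}^{\mathcal{H}}f\,d\sigma
\end{equation*}
together with Cauchy--Schwarz shows that, given \eqref{keyestint-}(ii), a \emph{single} bound $\|\partial_{\lambda_{0}}\mathcal{S}_{\lambda_{0}}^{\mathcal{H}}f\|_{2}\lesssim\|f\|_{2}$ for some $\lambda_{0}>0$ (obtained by integrating (ii) on $\lambda\in(\lambda_{0},2\lambda_{0})$ and applying Cauchy--Schwarz again) upgrades to the uniform bound in \eqref{keyestint-}(i). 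In this way the entire theorem is reduced to the square function estimate $|||\lambda\partial_{\lambda}^{2}\mathcal{S}_{\lambda}^{\mathcal{H}}f|||\lesssim\|f\|_{2}$.

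To prove this square function bound I would set $\theta_{\lambda}f:=\lambda\partial_{\lambda}^{2}\mathcal{S}_{\lambda}^{\mathcal{H}}f$. The Gaussian bounds and the De Giorgi--Moser--Nash H\"older regularity of $K_{t}(X,Y)$ give parabolic off-diagonal decay for the kernel of $\theta_{\lambda}$, uniformly in $\lambda$, together with the normalization $\theta_{\lambda}1=0$ in a suitable sense. Standard orthogonality arguments in the parabolic scale (quasi-orthogonality in $\lambda$ via the Cotlar--Stein lemma together with dyadic averaging at scale $\lambda$ in the spatial/temporal variables) then reduce $|||\theta_{\lambda}f|||\lesssim\|f\|_{2}$ to the Carleson measure estimate
\begin{equation*}
\sup_{Q}\frac{1}{|Q|}\iint_{Q}\int_{0}^{\ell(Q)}|\theta_{\lambda}1(x,t)|^{2}\,\frac{dx\,dt\,d\lambda}{\lambda}\;\lesssim\;1,
\end{equation*}
the supremum being taken over parabolic cubes $Q=Q_{\ell(Q)}(x_{0},t_{0})$ in $\mathbb{R}^{n+1}$. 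The verification of this Carleson condition is carried out by the local parabolic Tb-theorem for square functions alluded to in the introduction: for each parabolic cube $Q$ one constructs a test function $b_{Q}$ with $\|b_{Q}\|_{\infty}\lesssim 1$ and $\mathrm{Re}\,\mean{Q}b_{Q}\geq c>0$, for which $\theta_{\lambda}b_{Q}$ is square-Carleson over the Whitney region above $Q$.

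The natural construction of $b_{Q}$ is to let $b_{Q}(y,s)=-\partial_{\lambda}\mathcal{S}_{\lambda}^{\mathcal{H}}(\chi_{\tilde{Q}})(y,s)\big|_{\lambda=\ell(Q)^{+}}$ (or an appropriate averaged/mollified variant), so that $b_{Q}$ agrees, up to harmless additive terms, with the conormal derivative of a caloric function $u$ that is $\approx 1$ on a dilate $\tilde{Q}$ of $Q$. The pseudo-accretivity $\mathrm{Re}\,\mean{Q}b_{Q}\geq c>0$ is then precisely an \emph{averaged} statement about the parabolic Poisson kernel of $\mathcal{H}$ and is where Theorem \ref{parabolicm} enters: the scale-invariant reverse H\"older inequality for the parabolic Poisson kernel, proved by extending the Fabes--Safonov argument from \cite{FS} to operators $\partial_{t}-\dv A\nabla$ with $A$ real, symmetric, bounded, measurable and time-independent, supplies the $A_{\infty}$-type control of caloric measure with respect to surface measure on $\mathbb{R}^{n+1}$ needed to lower-bound the real part of the averages of $b_{Q}$ uniformly in $Q$. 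The remaining Carleson bound for $\theta_{\lambda}b_{Q}$ is then handled by standard parabolic tent-space and stopping-time arguments, using the anisotropic parabolic norm $\|(x,t)\|$ and the half-order time derivative $D^{t}_{1/2}$ to absorb the mismatch between spatial and temporal scaling.

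I expect the main technical obstacle to be the proof of Theorem \ref{parabolicm}, namely the scale-invariant reverse H\"older inequality for the parabolic Poisson kernel in the real symmetric time-independent setting. The elliptic analogue rests on symmetry of the coefficient matrix under the adjoint, but the parabolic operator is never self-adjoint in spacetime even when $A$ is symmetric, so the usual duality shortcut is unavailable; one has to push the Fabes--Safonov machinery through while controlling the interaction between caloric measure on the lateral boundary and Green's function estimates in the parabolic Whitney regions $W_{\lambda}(x,t)$, and to show that the resulting reverse H\"older inequality is genuinely scale-invariant with respect to parabolic cubes. Once this is established, the parabolic local Tb-theorem for square functions applies cleanly, and everything else in the theorem assembles through the reduction described above and an invocation of Theorem \ref{th0}.
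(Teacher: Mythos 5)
Your overall architecture (local parabolic Tb-theorem for the square function, with Theorem \ref{parabolicm} supplying the key quantitative input about the Poisson kernel) matches the paper, but there is a genuine gap in your reduction of \eqref{keyestint-}~$(i)$ to \eqref{keyestint-}~$(ii)$. The fundamental theorem of calculus plus Cauchy--Schwarz gives
$\|\partial_\lambda\mathcal{S}_\lambda f-\partial_\mu\mathcal{S}_\mu f\|_2\leq \bigl(\int_\mu^\lambda\|\sigma\partial_\sigma^2\mathcal{S}_\sigma f\|_2^2\,\tfrac{d\sigma}{\sigma}\bigr)^{1/2}\bigl(\log(\lambda/\mu)\bigr)^{1/2}$,
so even granting an anchor bound at one height $\lambda_0$, the transfer to arbitrary $\lambda$ degenerates logarithmically as $\lambda/\lambda_0\to\infty$ or $\to 0$; moreover the anchor bound itself cannot be extracted from $(ii)$, which only controls \emph{second} $\lambda$-derivatives (and the uniform off-diagonal bounds of Lemma \ref{le5} miss exactly the case $m=0$, $l=-1$ needed for $\partial_\lambda\mathcal{S}_\lambda$). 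Remark \ref{fut2} makes this point explicitly: in the elliptic setting the implication $(ii)\Rightarrow(i)$ is a theorem of \cite{AA}, and its parabolic analogue is posed as an open problem. The paper instead proves $(i)$ for real symmetric $A$ by invoking the square-function/non-tangential-maximal-function comparison of \cite{B2} (whose extension to $\mathcal{H}$ is itself enabled by Theorem \ref{parabolicm}), which yields $\|\partial_\lambda\mathcal{S}_\lambda f\|_2\leq c\,|||\lambda\nabla\partial_\lambda\mathcal{S}_\lambda f|||$ after killing the boundary term at infinity, and then passes to $|||\lambda\partial_\lambda^2\mathcal{S}_\lambda f|||+\|f\|_2$ via the integration-by-parts identity \eqref{eq4.43}. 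You need some substitute for this step.

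On the Tb step your choice of test functions also diverges from the paper's in a way that leaves work undone. You take $b_Q$ to be a conormal derivative $-\partial_\lambda\mathcal{S}_\lambda(\chi_{\tilde Q})|_{\lambda=\ell(Q)}$; verifying $\|b_Q\|_2^2\lesssim|Q|$ for this choice is uncomfortably close to the very bound \eqref{keyestint-}~$(i)$ you are trying to prove, and the accretivity $\mathrm{Re}\,\mean{Q}b_Q\geq c$ is not an immediate consequence of Theorem \ref{parabolicm}. The paper instead sets $b_Q:=|Q|1_Q K_-(A_Q^-,\cdot,\cdot)$, the Poisson kernel of the adjoint operator at the reflected corkscrew point $A_Q^-=(x_Q,-l(Q),t_Q)$ relative to the lower half-space. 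Then the three hypotheses of Theorem \ref{ltb} become, respectively: the reverse H{\"o}lder inequality of Theorem \ref{parabolicm}; the reproducing property of the Poisson kernel, which collapses $\theta_\lambda b_Q(x,t)$ to the single point evaluation $\lambda|Q|\,\partial_\lambda^2\Gamma(x,t,\lambda,x_Q,t_Q,-l(Q))$ and hence to a Carleson bound via Lemma \ref{le2+}; and the elementary lower bound $\omega_-^{A_Q^-}(Q)\geq c^{-1}$ for parabolic measure. If you keep your test functions you must supply independent proofs of all three conditions; switching to the paper's choice makes each one essentially one line given Theorem \ref{parabolicm}. Finally, a small point: $\theta_\lambda 1\neq 0$ (otherwise the Carleson estimate \eqref{testf+} would be vacuous); the cancellation used in the reduction to the Carleson condition is the H{\"o}lder regularity \eqref{CZ2} of $\psi_\lambda$, with the $T1=0$ normalization appearing only for the error operator $\mathcal{R}^{(2)}_\lambda$ inside the proof of Theorem \ref{ltb}.
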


     \begin{theorem}\label{parabolicm} Assume that $\mathcal{H}=\partial_t-\div A\nabla$ satisfies \eqref{eq3}-\eqref{eq4}.
Suppose in addition that
 $A$ is real and symmetric. Then the parabolic measure associated to $\mathcal{H}$, in $\mathbb R^{n+2}_+$, is absolutely continuous with respect to the
 measure $dxdt$ on $\mathbb R^{n+1}=\partial \mathbb R^{n+2}_+$. Moreover, let $Q\subset\mathbb R^{n+1}$ be a parabolic cube and let $K(A_Q,y,s)$ be the to
  $\mathcal{H}$ associated Poisson kernel at $A_Q:=(x_Q,l(Q),t_Q)$ where $(x_Q,t_Q)$ is the center of the cube $Q$ and $l(Q)$ defines its size. Then there exists $c\geq 1$, depending only on $n$ and $\Lambda$, such that
  $$\int_{Q}|K(A_Q,y,s)|^{2}\, dyds\leq c|Q|^{-1}.$$
\end{theorem}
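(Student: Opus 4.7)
The proof breaks naturally into two parts. For absolute continuity of $\omega$ with respect to $dxdt$ on $\mathbb{R}^{n+1}$, I would adapt the main result of \cite{FS} from the non-divergence setting to divergence form operators of the form \eqref{eq1} under assumption \eqref{eq4}; this adaptation is one of the technical ingredients announced in the introduction. Once absolute continuity is available, the Poisson kernel $K(A_Q,\cdot)=d\omega^{A_Q}/dyds$ is well defined a.e., and the non-degeneracy $\omega^{A_Q}(Q)\geq c_0>0$, with $c_0=c_0(n,\Lambda)$, follows from standard parabolic estimates.

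For the quantitative $L^2$ bound I would mimic, in the parabolic setting, the elliptic Rellich--Ne\v{c}as identity of Jerison--Kenig, using in an essential way that $A$ is real, symmetric, and independent of both $\lambda$ and $t$. Let $G=G(A_Q,\cdot)$ be the Green function of $\mathcal{H}$ in $\mathbb{R}^{n+2}_+$ with pole at $A_Q$, and set $\mathcal{R}_\epsilon:=(4Q)\times(\epsilon l(Q),l(Q))$. Multiplying $\mathcal{H}G=0$ by $\partial_\lambda\bar G$, integrating over $\mathcal{R}_\epsilon$ and integrating by parts, the real symmetry and $\lambda$-independence of $A$ convert the spatial bulk via
\begin{equation*}
2\,\mathrm{Re}\bigl[\div(A\nabla G)\,\partial_\lambda\bar G\bigr] = -\partial_\lambda(A\nabla G\cdot\overline{\nabla G}) + 2\,\mathrm{Re}\,\div\bigl[(A\nabla G)\partial_\lambda\bar G\bigr]
\end{equation*}
into a sum of boundary integrals. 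Ellipticity and real symmetry let me absorb tangential cross terms so that the bottom-slice contribution dominates $c\int_{4Q\times\{\epsilon l(Q)\}}|\partial_\lambda G|^2\,dyds$, while the top and side contributions are later estimated by Gaussian/Caccioppoli bounds on $G$.

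The main, genuinely parabolic obstacle is the time-derivative term $2\,\mathrm{Re}\int_{\mathcal{R}_\epsilon}\partial_t G\cdot\partial_\lambda\bar G\,dxdtd\lambda$, which has no elliptic counterpart. I would exploit that, since $A$ is $t$-independent, $\partial_t G$ itself solves $\mathcal{H}v=0$. Writing $\partial_t=D^t_{1/2}H_tD^t_{1/2}$ as in the notation subsection, transferring one half-derivative onto $\partial_\lambda\bar G$ by the skew-adjointness of $D^t_{1/2}$, and invoking $L^2$-boundedness of $H_t$, this term is dominated by $C\|D^t_{1/2}G\|_{L^2(\mathcal{R}_\epsilon)}\|D^t_{1/2}\partial_\lambda G\|_{L^2(\mathcal{R}_\epsilon)}$. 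Each factor is then controlled by parabolic Caccioppoli-type energy inequalities applied to $G$ and to $\partial_t G$, and a final Young's inequality absorbs a small multiple of the desired slice integral on the left. I expect this half-derivative bookkeeping, together with the energy estimates for $\partial_t G$, to be the most delicate step.

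Letting $\epsilon\to 0$, and using that $K(A_Q,y,s)$ agrees, modulo the ellipticity constant $A_{n+1,n+1}(y)$, with the conormal trace of $-A\nabla G$ at $\{\lambda=0\}$, the resulting slice inequality yields
\begin{equation*}
\int_Q |K(A_Q,y,s)|^2\,dyds \leq C\int_{\partial \mathcal{R}_0\cap\{\lambda>0\}}\bigl(|\nabla G|^2+|D^t_{1/2}G|^2\bigr) \leq c\,|Q|^{-1},
\end{equation*}
where the last inequality follows from Gaussian (Aronson) bounds on the fundamental solution provided by the De Giorgi--Moser--Nash hypothesis \eqref{eq14+}-\eqref{eq14++} together with standard Caccioppoli estimates on Whitney regions around $A_Q$.
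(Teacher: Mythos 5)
Your proposal diverges from the paper's argument in an essential way, and the step you yourself flag as ``most delicate'' is a genuine gap. The paper does not use a Rellich--Ne\v{c}as identity at all. Instead it follows the Fabes--Salsa scheme: first, by time-translation invariance (from \eqref{eq4}) and the Harnack inequality (here is where realness of $A$ enters), one replaces $\int_Q K(A_Q,y,s)^2\,dyds$ by $c\int K(X_Q,0,y,-s)\,K(X_Q,16l(Q)^2,y,s)\,dyds$, i.e.\ a product of two Poisson kernels with poles at \emph{different} times. This product is then expressed, via the adjoint equation for $\tilde v(Y,s)=G(X_Q,16l(Q)^2,Y,s)$ tested against $\Psi=\phi\,\partial_{y_{n+1}}G(X_Q,0,Y,-s)$, as a sum of bulk terms. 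The decisive structural point is that, since $A$ is independent of $y_{n+1}$, the function $\partial_{y_{n+1}}G(X_Q,0,Y,-s)$ solves the same equation as $G(X_Q,0,Y,-s)$, so the only surviving bulk terms carry derivatives of the cutoff $\phi$; these live away from both poles and are controlled by Caccioppoli and Gaussian bounds, giving $c|Q|^{-1}$ directly. No boundary Rellich-type identity, and no half-order time derivatives, appear anywhere.

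The gap in your route is the treatment of $2\,\mathrm{Re}\int_{\mathcal{R}_\epsilon}\partial_tG\,\partial_\lambda\bar G$. The proposed bound by $\|D^t_{1/2}G\|_{L^2(\mathcal{R}_\epsilon)}\|D^t_{1/2}\partial_\lambda G\|_{L^2(\mathcal{R}_\epsilon)}$ with both factors controlled by ``parabolic Caccioppoli-type energy inequalities'' does not close. First, $D^t_{1/2}$ and $H_t$ are nonlocal in $t$, so they do not commute with the restriction to $\mathcal{R}_\epsilon$ and cannot simply be moved across the truncated integral by skew-adjointness. Second, and more seriously, $\partial_\lambda G$ has a nontrivial trace on $\{\lambda=0\}$ (its trace \emph{is} essentially $K(A_Q,\cdot)$), so interior energy estimates for $\partial_tG$ or $D^t_{1/2}\partial_\lambda G$ degenerate like $\lambda^{-2}$ near the boundary and the quantity $\|D^t_{1/2}\partial_\lambda G\|_{L^2(\mathcal{R}_\epsilon)}$ is not bounded uniformly as $\epsilon\to0$; controlling it amounts to a square-function/area-integral estimate of the same depth as the theorem itself. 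This is precisely the obstruction that makes parabolic Rellich identities hard (cf.\ \cite{B1}, \cite{HL}), and it is the reason the paper (following \cite{FS}) avoids the Rellich identity and instead exploits Harnack together with the commutation of $\partial_{y_{n+1}}$ with $\mathcal{H}$. Your first part (absolute continuity via an adaptation of \cite{FS}, plus non-degeneracy of $\omega^{A_Q}(Q)$) is consistent with the paper.
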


  \begin{remark}\label{fut2}  Note that \eqref{keyestint-} $(i)$ is a uniform (in $\lambda$) $L^2$-estimate involving the first order partial derivative, in the $\lambda$-coordinate, of single layer potentials, while  \eqref{keyestint-} $(ii)$  is a square function estimate involving the second order partial derivatives, in the $\lambda$-coordinate, of single layer potentials. A relevant question is naturally in what generality the estimates in \eqref{keyestint-} can be expected to hold. In \cite{N1} it is proved, under additional assumptions, that these estimates are stable under small complex perturbations of the coefficient matrix. However, in the elliptic case and after \cite{AAAHK} appeared, it was proved in \cite{R}, see \cite{GH} for an alternative proof, that if $-\mbox{div}\, A(X)\nabla$ satisfies the basic assumptions imposed in
     \cite{AAAHK}, then the elliptic version of \eqref{keyestint-} $(ii)$ always holds. In fact, the approach in \cite{R}, which is based on functional calculus, even dispenses of the De Giorgi-Moser-Nash estimates underlying \cite{AAAHK}. Furthermore, in the elliptic case \eqref{keyestint-} $(ii)$ can be seen to imply \eqref{keyestint-} $(i)$ by the results of \cite{AA}. Hence, in the elliptic case, and under the assumptions of \cite{AAAHK},  the elliptic version of \eqref{keyestint-} always holds. Based on this it is fair to pose the question whether or not a similar line of development can be anticipated in the parabolic case. Based on \cite{N}, this paper and \cite{N1}, we anticipated that a parabolic version of \cite{GH} can be developed, To develop a parabolic version of \cite{AA} is a very interesting and potentially challenging project.
 \end{remark}

 Theorem \ref {parabolicm} is used in the proof of Theorem \ref{th2} and to our knowledge Theorem \ref{th0}, Theorem \ref{th2} and Theorem \ref {parabolicm} are all new. To put these results in the context of the current literature devoted to parabolic layer potentials and parabolic singular integrals, in $C^1$- regular or Lipschitz regular cylinders, it is fair to first mention \cite{FR}, \cite{FR1}, \cite{FR2} where a theory of singular integral operators with mixed homogeneity was developed and Theorem \ref{th0} $(i)-(iv)$ were proved in the context of the heat operator and in the context of time-independent $C^1$-cylinders. These results were then extended in \cite{B}, \cite{B1}, still in the context of the heat operator, to the setting of time-independent Lipschitz domains. The more challenging setting of  time-dependent Lipschitz type domains was considered in \cite{LM}, \cite{HL}, \cite{H}, see also \cite{HL1}. In particular, in these papers the correct notion of time-dependent Lipschitz type domains, from the perspective of parabolic singular integral operators and parabolic  layer potentials,  was found. One  major contribution of these papers, see \cite{HL}, \cite{H} and \cite{HL1} in particular, is the proof of Theorem \ref{th0} in the  context of the heat operator in time-dependent Lipschitz type domains. Beyond these results the literature only contains modest contributions to the study of
     parabolic layer potentials associated to second order parabolic operators (in divergence form) with  variable, bounded, measurable, uniformly elliptic (and complex) coefficients. Based on this we believe that our results will pave the way for important developments in the area of parabolic PDEs.

      While Theorem \ref{th0} and Theorem \ref{th2} coincide, in the stationary case, with the set up  and the corresponding results established in   \cite{AAAHK} for  elliptic equations, we claim that our results, Theorem \ref{th0} in particular, are not, for at least two reasons, straightforward generalizations of the corresponding results in \cite{AAAHK}. First, our result rely on \cite{N} where certain square function estimates are established for second order parabolic operators of the form $\mathcal{H}$, and where, in particular,  a parabolic version of the technology in \cite{AHLMcT} is developed. Second, in general the presence of the (first order) time-derivative forces one to consider fractional time-derivatives leading, as in \cite{LM}, \cite{HL}, \cite{H}, see also \cite{HL1}, to rather elaborate additional estimates. Theorem \ref{parabolicm} gives a parabolic version of an elliptic result due to Jerison and Kenig \cite{JK} and a version of the main result in \cite{FS} for equations of the form \eqref{eq1}, assuming in addition that $A$ is real and symmetric.

     \subsection{Proofs and organization of the paper} In general we will only supply the proof of our statements for $\mathcal{S}_\lambda:=\mathcal{S}_\lambda^{\mathcal{H}}$. The corresponding results  for $\mathcal{S}_\lambda^\ast:=\mathcal{S}_\lambda^{\mathcal{H}^\ast}$ then follow readily by analogy. In Section \ref{sec2}, which is of preliminary nature, we introduce notation, weak solutions, state the De Giorgi-Moser-Nash estimates referred to in Theorem \ref{th0}, we prove energy estimates, and we state/prove a few fact from Littlewood-Paley theory. In Section \ref{sec3} we prove a set of important preliminary  estimates related to the boundedness of single layer potentials: off-diagonal estimates and  uniform  (in $\lambda$) $L^2$-estimates. Section \ref{sec4}  is devoted to the proof of two important lemmas: Lemma \ref{lemsl1++} and Lemma \ref{lemsl1}. To briefly describe these results  we introduce $\Phi(f)$ where
     \begin{eqnarray}\label{keyestint-ex+}
\Phi(f):=\sup_{\lambda> 0}||\partial_\lambda \mathcal{S}_\lambda f||_2+|||\lambda \partial_\lambda^2\mathcal{S}_{\lambda}f|||.
     \end{eqnarray}
     Lemma \ref{lemsl1++} concerns estimates of non-tangential maximal functions and in this lemma we establish bounds of $||N_\ast(\partial_\lambda \mathcal{S}_\lambda f)||_2$, $||\tilde N_\ast(\nabla_{||}\mathcal{S}_\lambda f)||_2$ and $||\tilde N_\ast(H_tD_{1/2}^t\mathcal{S}_\lambda f)||_2$ in terms of a constant times
     $$\Phi(f)+||f||_2+\sup_{\lambda>0}||\mathbb D \mathcal{S}_\lambda f||_2.$$
     In Lemma \ref{lemsl1} we establish square function estimates of the form,
      \begin{eqnarray*}
              (i)&&|||\lambda^{m+2l+4}\nabla\partial_\lambda\partial_t^{l+1}\partial_\lambda^{m+1}\mathcal{S}_{\lambda}f|||\leq c(\Phi(f)+||f||_2),\notag\\
     (ii)&&|||\lambda^{m+2l+4}\partial_t\partial_t^{l+1}\partial_\lambda^{m+1}\mathcal{S}_{\lambda}f|||\leq c(\Phi(f)+||f||_2),
     \end{eqnarray*}
whenever $f\in L^2(\mathbb R^{n+1},\mathbb C)$, and for $m\geq -1$, $l\geq -1$. Using Lemma \ref{lemsl1++}, the proof of Theorem \ref{th0} boils down to proving the estimate
     \begin{eqnarray}\label{keyestint-ex+eed}
\sup_{\lambda>0}||\mathbb D \mathcal{S}_\lambda f||_2\leq c(\Phi(f)+||f||_2).
     \end{eqnarray}
     The estimate in \eqref{keyestint-ex+eed}, which is rather demanding, uses Lemma \ref{lemsl1} and make extensive use of recent results concerning resolvents, square functions and Carleson measures, established in \cite{N}. In Section \ref{sec5} we collect the material from \cite{N} needed in the proof of \eqref{keyestint-ex+eed}. In \cite{N} a parabolic version of the main and hard estimate in \cite{AHLMcT} is established. In the final subsection of Section \ref{sec5}, Section \ref{kato}, we also seize the opportunity to clarify some statements made in \cite{N} concerning the Kato square root problem for parabolic operators. The conclusion is that in \cite{N} the Kato square root problem for parabolic operators is solved for  for the first time in the literature.   In
     Section \ref{sec6} we prove \eqref{keyestint-ex+eed} as a consequence of Lemma \ref{lemsl1c}, Lemma \ref{lemsl1+}, and Lemma \ref{lemsl1+k} stated below. For clarity, the final proof of Theorem \ref{th0}, based on the estimates established in the previous sections, is summarized in Section \ref{sec7}. In Section \ref{sec8} we prove Theorem \ref{th2} by first establishing a local parabolic Tb-theorem for square functions, see Theorem \ref{ltb}, and then by establishing Theorem \ref{parabolicm}. We believe that our proof of Theorem \ref{parabolicm} adds to the clarity of the corresponding argument in \cite{FS}.

    \section{Preliminaries}\label{sec2}

    Let
$x=(x_1,..,x_{n})$, $X=(x,x_{n+1})$, $(x,t)=(x_1,..,x_{n},t)$, $(X,t)=(x_1,..,x_{n}, x_{n+1},t)$. Given $(X,t)=(x,x_{n+1}, t)$, $r>0$, we let $Q_r(x,t)$ and
 $\tilde Q_r(X,t)$ denote, respectively, the  parabolic cubes in $\mathbb R^{n+1}$ and $\mathbb R^{n+2}$,  centered at $(x,t)$ and $(X,t)$, and of size $r$. By $Q$, $\tilde Q$ we denote any such parabolic cubes and we let $l(Q)$, $l(\tilde Q)$, $(x_Q,t_Q)$, $(X_{\tilde Q},t_{\tilde Q})$ denote their sizes and centers, respectively. Given $\gamma>0$, we let $\gamma Q$, $\gamma \tilde Q$ be the cubes which have the same centers as $Q$ and $\tilde Q$, respectively, but with sizes defined by $\gamma l(Q)$ and $\gamma l(\tilde Q)$.  Given a set $E\subset \mathbb R^{n+1}$ we let $|E|$ denote its Lebesgue measure and by
$1_E$ we denote the indicator function for $E$. Finally, by $||\cdot||_{L^2(E)}$ we mean $||\cdot 1_E||_2$. Furthermore, as mentioned and based on \eqref{eq4}, we will frequently also use a different convention concerning the labeling of the coordinates: we let $\lambda=x_{n+1}$ and when using the symbol
    $\lambda$, the point $(X,t)=(x,x_{n+1},t)$ will be written as $ (x,t, \lambda)=(x_1,..,x_{n},t,\lambda)$.  We write $\nabla =(\nabla_{||},\partial_\lambda)$ where $\nabla_{||}=(\partial_{x_1},...,\partial_{x_n})$. The notation $L^2(\mathbb R^{n+1},\mathbb C)$, $||\cdot||_2$, $\|(\cdot,\cdot)\|$, $\mathbb D$, $D_{1/2}^t$, $H_t$, was introduced in subsection \ref{nota} above. In the following we will, in addition
   to $\mathbb D$ and $D_{1/2}^t$, at instances also use  the parabolic half-order time derivative
\begin{eqnarray*}
	\widehat{\mathbb D_{n+1}f}(\xi,\tau):=\frac{\tau}{\|(\xi,\tau)\|}\hat f(\xi,\tau).
\end{eqnarray*}
We let
$\mathbb H:=\mathbb H(\mathbb R^{n+1},\mathbb C)$ be the closure of $C_0^\infty(\mathbb R^{n+1},\mathbb C)$ with respect to
\begin{eqnarray}\label{hsapace}
	\|f\|_{\mathbb H}:=\|\mathbb Df\|_2.
\end{eqnarray}
By applying
Plancherel's theorem  we have
\begin{eqnarray}\label{uau}
(i)&&\|f\|_{\mathbb H}\approx\|\nabla_{||}  f\|_2+\|H_tD_{1/2}^tf\|_2,\notag\\
(ii)&&\|\mathbb D_{n+1}f\|_2\leq c\|D^t_{1/2}f\|_2,\end{eqnarray}
with constants depending only on $n$. Furthermore, we let $\tilde{\mathbb H}:=\tilde{\mathbb H}(\mathbb R^{n+2},\mathbb C)$ be the closure of $C_0^\infty(\mathbb R^{n+2},\mathbb C)$
with respect to
\begin{eqnarray*}
\|F\|_{\tilde{\mathbb H}}:=\biggl (\int_{-\infty}^\infty\int_{\mathbb R^{n+1}}\biggl(|\partial_\lambda F|^2+|\mathbb DF|^2\biggr )\, dxdtd\lambda\biggr )^{1/2}.
\end{eqnarray*}
Similarly, we let $\tilde{\mathbb H}_+:=\tilde{\mathbb H}_+(\mathbb R^{n+2}_+,\mathbb C)$ be the closure of $C_0^\infty(\mathbb R^{n+2}_+,\mathbb C)$ with respect to the expression in the last display but with integration over the interval $(-\infty,\infty)$ replaced by integration over the interval $(0,\infty)$.

\subsection{Weak solutions} Let $\Omega\subset\{X=(x,x_{n+1})\in\mathbb R^n\times\mathbb R_+\}$ be a domain and let, given $-\infty<t_1< t_2<\infty$,
$\Omega_{t_1,t_2}=\Omega\times (t_1,t_2)$. We let $W^{1,2}(\Omega,\mathbb C)$ be the  Sobolev space of complex valued functions $v$, defined on $\Omega$, such that $v$ and $\nabla v$ are in $L^{2}(\Omega,\mathbb C)$. $L^2(t_1,t_2,W^{1,2}(\Omega,\mathbb C))$ is the space of  functions $u:\Omega_{t_1,t_2}\to \mathbb C$ such that
$$||u||_{L^2(t_1,t_2,W^{1,2}(\Omega,\mathbb C))}:=\biggl (\int_{t_1}^{t_2}||u(\cdot,t)||_{W^{1,2}(\Omega,\mathbb C)}^2\, dt\biggr )^{1/2}<\infty.$$
We say that $u\in L^2(t_1,t_2,W^{1,2}(\Omega,\mathbb C))$  is a weak solution to the equation
\begin{eqnarray}\label{ggag}
\mathcal{H}u=(\partial_t+\mathcal{L})u=0,\end{eqnarray}
in $\Omega_{t_1,t_2}$, if \begin{equation}\label{weak}
\int_{\mathbb R^{n+2}_+} \biggl( A\nabla u\cdot\nabla\bar \phi-u \partial_t\bar\phi\biggr )\, dXdt=0,
\end{equation}
whenever $\phi \in C_0^{\infty} (\Omega_{t_1,t_2},\mathbb C)$. Similarly, we say that $u$ is a weak solution to \eqref{ggag} in $\mathbb R^{n+2}_+$ if
$u\phi\in L^2(-\infty,\infty,W^{1,2}(\mathbb R^n\times\mathbb R_+,\mathbb C))$ whenever $\phi \in C_0^{\infty} (\mathbb R^{n+2}_+,\mathbb C)$ and if
\eqref{weak} holds whenever $\phi \in C_0^{\infty} (\mathbb R^{n+2}_+,\mathbb C)$. Assuming  that $\mathcal{H}$  satisfies \eqref{eq3}-\eqref{eq4} as well as the De Giorgi-Moser-Nash estimates stated in \eqref{eq14+}-\eqref{eq14++} below, it follows that any weak solution is smooth as a function of $t$ and in this case
     \begin{equation*}
\int_{\mathbb R^{n+2}_+} \biggl( A\nabla u\cdot\nabla\bar \phi+\partial_tu \bar\phi\biggr )\, dXdt=0,
\end{equation*}
holds whenever $\phi \in C_0^{\infty} (\Omega_{t_1,t_2},\mathbb C)$. Furthermore, if $u$ is globally defined in $\mathbb R^{n+2}_+$, and if
$D_{1/2}^tu\overline{H_tD_{1/2}^t\phi}$ is integrable in $\mathbb R^{n+2}_+$, whenever $\phi\in C_0^\infty(\mathbb R^{n+2}_+,\mathbb C)$, then
\begin{eqnarray}\label{eq4-ed}
B_+(u,\phi)=0  \mbox{ whenever } \phi\in C_0^\infty(\mathbb R^{n+2}_+,\mathbb C),
    \end{eqnarray}
where the sesquilinear form $B_+(\cdot,\cdot)$ is defined on $ \tilde{\mathbb H}_+\times  \tilde{\mathbb H}_+$ as
       \begin{eqnarray*}
  B_+(u,\phi):= \int_0^\infty\int_{\mathbb R^{n+1}}
      \biggl(A\nabla u\cdot\nabla\bar \phi-D_{1/2}^tu\overline{H_tD_{1/2}^t\phi}\biggr)\, dxdtd\lambda.
      \end{eqnarray*}
      In particular, whenever $u$ is a weak solution to \eqref{ggag} in $\mathbb R^{n+2}_+$ such that $u\in \tilde{\mathbb H}_+$, then
      \eqref{eq4-ed} holds. From now on, whenever we write that $\mathcal{H}u=0$ in a bounded domain $\Omega_{t_1,t_2}$, then we mean that
      \eqref{weak} holds whenever $\phi \in C_0^{\infty} (\Omega_{t_1,t_2},\mathbb C)$, and when we write that $\mathcal{H}u=0$ in $\mathbb R^{n+2}_+$, then we mean that \eqref{weak} holds whenever $\phi \in C_0^{\infty} (\mathbb R^{n+2}_+,\mathbb C)$.

      \subsection{De Giorgi-Moser-Nash estimates}  We say  that solutions to
     $\mathcal{H}u=0$ satisfy De Giorgi--Moser-Nash estimates if there exist, for each $1\leq p<\infty$ fixed, constants $c$ and $\alpha\in (0,1)$ such that the following is true. Let $\tilde Q\subset\mathbb R^{n+2}$ be a parabolic cube and assume that
$\mathcal{H}u=0$ in $2\tilde Q$. Then
                       \begin{eqnarray}\label{eq14+}
                \sup_{ \tilde Q}|u|\leq c\biggl (\mean{2\tilde Q}|u|^p\biggr )^{1/p},
    \end{eqnarray}
         and
        \begin{eqnarray}\label{eq14++}
                &&|u(X,t)-u(\tilde X,\tilde t)|\leq c\biggl (\frac {||(X-\tilde X,t-\tilde t)||}r\biggr )^{\alpha}
                \biggl (\mean{2\tilde Q}|u|^p\biggr )^{1/p},
    \end{eqnarray}
    whenever $(X,t)$, $(\tilde X,\tilde t)\in \tilde Q$, $r:=l(\tilde Q)$. The constant $c$ and $\alpha$ will be referred to as the
    De Giorgi-Moser-Nash constants. It is well known that if \eqref{eq14+}-\eqref{eq14++} hold for one $p$, $1\leq p<\infty$, then these estimates hold for all $p$ in this range.

\subsection{Energy estimates}

     \begin{lemma}\label{le1--} Assume that $\mathcal{H}$ satisfies \eqref{eq3}-\eqref{eq4}. Let $\tilde Q\subset\mathbb R^{n+2}$ be a parabolic cube and let $\beta>1$ be a fixed constant. Assume that
$\mathcal{H}u=0$ in $\beta\tilde Q$.  Let $\phi\in C_0^\infty(\beta\tilde Q)$ be   a cut-off function for $\tilde Q$ such that $0\leq \phi\leq 1$, $\phi=1$ on $\tilde Q$. Then there exists a constant
     $c=c(n,\Lambda,\beta)$, $1\leq c<\infty$, such that
     \begin{eqnarray*}
     \int|\nabla u(X,t)|^2(\phi(X,t))^2\, dXdt\leq  c\int|u(X,t)|^2(|\nabla \phi(X,t)|^2+\phi(X,t)|\partial_t\phi(X,t)|)\, dXdt.
\end{eqnarray*}
\end{lemma}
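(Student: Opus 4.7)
The plan is the standard parabolic Caccioppoli/energy estimate for complex-coefficient divergence-form equations: test the weak formulation of $\mathcal{H}u=0$ against $\psi := \phi^2 u$, split the derivatives that fall on $\phi$ from those that fall on $u$, take the real part, and absorb the resulting gradient cross term by Cauchy--Schwarz with a small parameter.

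Concretely, since $\bar\psi = \phi^2 \bar u$ with $\nabla\bar\psi = 2\phi\bar u\,\nabla\phi + \phi^2\nabla\bar u$ and $\partial_t\bar\psi = 2\phi\bar u\,\partial_t\phi + \phi^2\partial_t\bar u$, substituting into the weak formulation (after the justification discussed below) yields
\begin{equation*}
\int \phi^2\,A\nabla u\cdot\nabla\bar u\,dXdt + 2\int \phi\,\bar u\,A\nabla u\cdot\nabla\phi\,dXdt = \int\bigl(2\phi\,\partial_t\phi\,|u|^2 + \phi^2\,u\,\partial_t\bar u\bigr)\,dXdt.
\end{equation*}
Taking real parts and using the pointwise identity $2\operatorname{Re}(u\,\partial_t\bar u) = \partial_t|u|^2$, the last term equals $\tfrac12\int\phi^2\partial_t|u|^2\,dXdt = -\int\phi\,\partial_t\phi\,|u|^2\,dXdt$ after integration by parts in $t$, so the right-hand side collapses to $\int\phi\,\partial_t\phi\,|u|^2\,dXdt$.

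The conclusion then follows from the ellipticity hypotheses \eqref{eq3}: the lower bound (i) controls $\Lambda^{-1}\int\phi^2|\nabla u|^2\,dXdt$ from below by the real part of the first left-hand term, while the upper bound (ii) estimates the cross term by
\[\Bigl|2\int\phi\,\bar u\,A\nabla u\cdot\nabla\phi\,dXdt\Bigr| \leq 2\Lambda\int \phi|\nabla u||\nabla\phi||u|\,dXdt \leq \varepsilon\int\phi^2|\nabla u|^2\,dXdt + \varepsilon^{-1}\Lambda^2\int|\nabla\phi|^2|u|^2\,dXdt,\]
valid for any $\varepsilon>0$; choosing $\varepsilon = (2\Lambda)^{-1}$ allows the first piece to be absorbed into the left-hand side, leaving precisely the claimed inequality with a constant $c=c(n,\Lambda,\beta)$.

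The only non-routine point, and the one I expect to be the main obstacle, is the justification of $\psi = \phi^2 u$ as an admissible test function: under the assumptions of the lemma (only \eqref{eq3}--\eqref{eq4}, without the De Giorgi--Moser--Nash hypothesis), $u$ need not be smooth in $t$, so the manipulation $2\operatorname{Re}(u\,\partial_t\bar u) = \partial_t|u|^2$ is not a priori legitimate. The standard remedy is to first test against $\phi^2 u_h$, where $u_h$ denotes the Steklov (or convolution) time-average of $u$ at scale $h$; this is an admissible test function in the parabolic energy space, the identity above holds classically for $u_h$, and one passes to the limit $h\to 0^+$ using that $u_h\to u$ in $L^2(t_1,t_2,W^{1,2})$ locally. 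After this limiting procedure the computation above goes through verbatim and produces the stated Caccioppoli estimate.
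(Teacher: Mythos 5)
Your proof is correct and follows exactly the paper's route: the paper tests the weak formulation against $\phi^2 u$, obtaining $\int (A\nabla u\cdot\nabla(\bar u\phi^2)-u\,\partial_t(\bar u\phi^2))\,dXdt=0$, and absorbs the cross term, just as you do. Your additional care with the Steklov time-averaging needed to legitimize $2\operatorname{Re}(u\,\partial_t\bar u)=\partial_t|u|^2$ is a detail the paper's two-line sketch omits, and it is handled correctly.
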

\begin{proof} The lemma is a standard energy estimate. Indeed,
 \begin{equation*}
\int \biggl( A\nabla u\cdot\nabla(\bar u\phi^2)-u \partial_t(\bar u\phi^2)\biggr )\, dXdt=0,
\end{equation*}
by the definition of weak solutions. Hence,
 \begin{equation*}
\int |\nabla u|^2\phi^2\, dxdt\leq c\int|u|^2(|\nabla \phi|^2+\phi|\partial_t\phi|)\, dXdt.
\end{equation*}
\end{proof}

\begin{lemma}\label{le1} Assume that $\mathcal{H}$ satisfies \eqref{eq3}-\eqref{eq4}.  Let $Q\subset\mathbb R^{n+1}$ be a parabolic cube, $\lambda_0\in \mathbb R$, and let $\beta_1>1$, $\beta_2\in (0,1]$ be fixed constants. Let $I=(\lambda_0-\beta_2l(Q),\lambda_0+\beta_2l(Q))$, $\gamma I=
(\lambda_0-\gamma \beta_2l(Q),\lambda_0+\gamma \beta_2l(Q))$ for $\gamma\in (0,1)$. Assume that $\mathcal{H}u=0$ in $\beta_1^2Q\times I$. Then there exists a constant
     $c=c(n,\Lambda,\beta_1,\beta_2)$, $1\leq c<\infty$, such that
\begin{eqnarray*}
(i)&&\mean{Q}|\nabla u(x,t,\lambda_0)|^2\, dxdt\leq c\mean{\beta_1Q\times\frac 1 4I}|\nabla
u(X,t)|^2\, dXdt,\notag\\
(ii)&&\mean{Q}|\nabla u(x,t,\lambda_0)|^2\, dxdt\leq \frac c{l(Q)^2}\mean{\beta_1^2Q\times\frac 1 2I}|u(X,t)|^2\,
dXdt.
\end{eqnarray*}
\end{lemma}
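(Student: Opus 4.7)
The strategy is to reduce both parts to the interior Caccioppoli estimate of Lemma \ref{le1--}, exploiting crucially that since $A$ is independent of $\lambda$, the function $v := \partial_\lambda u$ is itself a weak solution of $\mathcal{H} v = 0$ in $\beta_1^2 Q \times I$. For part (i), I would choose a smooth cutoff $\phi(x,t,\lambda)$ compactly supported strictly inside $\beta_1 Q \times \frac{1}{4}I$, identically $1$ on a neighborhood of $Q \times \{\lambda_0\}$, with $|\partial_\lambda \phi| \le c/l(Q)$. Since $\phi$ has compact support in $\lambda$, the fundamental theorem of calculus gives, for each $(x,t) \in Q$,
\[
|\nabla u(x,t,\lambda_0)|^2 \,=\, |\nabla u(x,t,\lambda_0)|^2\, \phi(x,t,\lambda_0)^2 \,\le\, \int_{\mathbb R} \bigl|\partial_\lambda\bigl(|\nabla u|^2 \phi^2\bigr)\bigr|\, d\lambda.
\]
Expanding $\partial_\lambda(|\nabla u|^2 \phi^2) = 2\mathrm{Re}(\nabla u \cdot \overline{\nabla \partial_\lambda u})\phi^2 + 2|\nabla u|^2 \phi\, \partial_\lambda \phi$, integrating over $Q$, and applying AM--GM with weight $l(Q)$ to the cross term, yields
\[
\int_Q |\nabla u(\cdot,\lambda_0)|^2\, dxdt \,\le\, l(Q)\!\! \int_{\mathrm{supp}\,\phi}\!\! |\nabla \partial_\lambda u|^2\, dXdt \,+\, \frac{c}{l(Q)}\!\! \int_{\mathrm{supp}\,\phi}\!\! |\nabla u|^2\, dXdt.
\]

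To control the first term on the right, I would apply Lemma \ref{le1--} to the weak solution $v = \partial_\lambda u$ with a cutoff equal to $1$ on $\mathrm{supp}\,\phi$ and compactly supported in $\beta_1 Q \times \frac{1}{4}I$; this bounds $\int_{\mathrm{supp}\,\phi} |\nabla \partial_\lambda u|^2$ by $(c/l(Q)^2)\int_{\beta_1 Q \times \frac{1}{4}I} |\partial_\lambda u|^2 \le (c/l(Q)^2) \int_{\beta_1 Q \times \frac{1}{4}I} |\nabla u|^2$. Substituting and using that $|\beta_1 Q \times \frac{1}{4}I|$ differs from $|Q|$ by a factor comparable to $l(Q)$ yields (i). Part (ii) then follows by combining (i) with a further application of Lemma \ref{le1--} to $u$ itself on the nest $\beta_1 Q \times \frac{1}{4}I \subset \beta_1^2 Q \times \frac{1}{2} I$, replacing $|\nabla u|^2$ on the right of (i) by $l(Q)^{-2}|u|^2$ on the slightly larger set.

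The only real technical subtlety is justifying the pointwise FTC step in $\lambda$, since a priori $\nabla u$ is merely $L^2$. This is resolved by noting that, because $A$ is independent of both $t$ and $\lambda$, mollification in these two variables produces $C^\infty$-in-$(t,\lambda)$ approximants that remain weak solutions with the same ellipticity constants; the argument is carried out on the mollifications and the final estimates pass to the limit. Apart from this, the proof is routine and uses only Lemma \ref{le1--} as a black box.
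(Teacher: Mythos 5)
Your proposal is correct and follows essentially the same route as the paper: the paper also reduces everything to the fundamental theorem of calculus in the $\lambda$-variable (there phrased as splitting $\nabla u(\cdot,\lambda_0)$ into its $\lambda$-average over a subinterval plus the deviation) combined with the Caccioppoli inequality of Lemma \ref{le1--} applied to $\partial_\lambda u$, which is a weak solution because $A$ is independent of $\lambda$; part (ii) is likewise obtained from (i) by one further application of Lemma \ref{le1--}. Your cutoff-based packaging of the FTC step and the mollification remark are only cosmetic variations on the paper's argument.
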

\begin{proof} It suffices to prove the lemma with $\beta_1=2$, $\beta_2=1$. Furthermore, we only prove  $(i)$ as  $(ii)$ follows from  $(i)$ and  Lemma \ref{le1--}. For $\lambda_0\in \mathbb R$ fixed, and with $\gamma I$ as above, we let
\begin{eqnarray*}
J_1&:=& \biggl (\mean{Q}\biggl |\nabla u(x,t,\lambda_0)-\mean{\frac 1 {16}I}\nabla u(x,t,\lambda)\, d\lambda\biggr |^2dxdt\biggr )^{1/2},\notag\\
J_2&:=&\biggl (\mean{Q}\biggl |\mean{\frac 1 {16}I}\nabla u(x,t,\lambda)\, d\lambda\biggr |^2dxdt\biggr )^{1/2}.
\end{eqnarray*}
Then
\begin{eqnarray*}
\biggl (\mean{Q}|\nabla u(x,t,\lambda_0)|^2\, dxdt\biggr )^{1/2}\leq J_1+J_2.
\end{eqnarray*}
Using the H{\"o}lder inequality
\begin{eqnarray*}
J_2\leq c\biggl (\mean{2Q\times \frac 1 8I}|\nabla
u(X,t)|^2\, dXdt\biggr )^{1/2}.
\end{eqnarray*}
Using the fundamental theorem of calculus and the H{\"o}lder inequality,
\begin{eqnarray*}
J_1\leq cl(Q)\biggl (\mean{Q\times \frac 1 {16}I}|\nabla \partial_\lambda u(X,t)|^2\, dXdt\biggr )^{1/2}.
\end{eqnarray*}
Using that $\partial_\lambda u$ is a solution to the same equation as $u$ it follows from Lemma \ref{le1--}  that
\begin{eqnarray*}
J_1\leq c\biggl (\mean{\frac 3 2Q\times \frac 1 8I}|\partial_\lambda
u(X,t)|^2\, dXdt\biggr )^{1/2}.
\end{eqnarray*}
Hence the estimate in $(i)$ follows. \end{proof}

\begin{lemma}\label{le1a} Assume that $\mathcal{H}$ satisfies \eqref{eq3}-\eqref{eq4}. Let $\tilde Q\subset\mathbb R^{n+2}$ be a parabolic cube and let $\beta>1$ be a fixed constant. Assume that
$\mathcal{H}u=0$ in $\beta\tilde Q$.  Then there exists a constant
     $c=c(n,\Lambda, \beta)$, $1\leq c<\infty$, such that
\begin{eqnarray*}
\mean{\tilde Q}|\partial_t u(X,t)|^2\, dXdt\leq \frac {c}{l(\tilde Q)^4}\mean{\beta\tilde Q}|u(X,t)|^2\, dXdt.
\end{eqnarray*}
\end{lemma}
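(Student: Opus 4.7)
The plan is a two-step Caccioppoli iteration. First I will establish the auxiliary inequality
\begin{eqnarray*}
\int|\partial_tu|^2\phi^2\, dXdt\leq c\int|\nabla u|^2\bigl(|\nabla\phi|^2+\phi|\partial_t\phi|+\phi^2\bigr)\, dXdt,
\end{eqnarray*}
for any cutoff $\phi\in C_0^\infty(\beta\tilde Q)$ with $0\leq\phi\leq 1$, and then I will apply Lemma \ref{le1--} on an intermediate cube $\beta'\tilde Q$ with $1<\beta'<\beta$ to dominate the right-hand side by $cl(\tilde Q)^{-4}\int_{\beta\tilde Q}|u|^2\, dXdt$; dividing by $|\tilde Q|$ then yields the stated estimate.

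To treat $\partial_tu$ as an $L^2_{\mathrm{loc}}$ function, I observe (crucially using that $A$ is independent of $t$) that the difference quotients $u_h(X,t):=h^{-1}\bigl(u(X,t+h)-u(X,t)\bigr)$ are themselves weak solutions of $\mathcal{H}u_h=0$ on a slightly smaller cube. Applying Lemma \ref{le1--} to $u_h$ and passing $h\to 0$ furnishes $\partial_tu\in L^2_{\mathrm{loc}}$ in the interior of $\beta\tilde Q$ together with the identity $\partial_tu=\div(A\nabla u)$ in the almost everywhere sense.

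For the auxiliary inequality, I test the weak formulation against $\bar\psi=\partial_tu\,\phi^2$ to obtain
\begin{eqnarray*}
\int|\partial_tu|^2\phi^2\, dXdt=-\int A\nabla u\cdot\partial_t\nabla\bar u\,\phi^2\, dXdt-2\int A\nabla u\cdot\nabla\phi\,\overline{\partial_tu}\,\phi\, dXdt,
\end{eqnarray*}
using $\nabla\overline{\partial_tu}=\partial_t\nabla\bar u$ because $A$ does not depend on $t$. The cross integral is absorbed by Cauchy--Schwarz with a small parameter. Taking real parts in the bulk and using the pointwise identity
\begin{eqnarray*}
\mbox{Re}\bigl(A\nabla u\cdot\partial_t\nabla\bar u\bigr)=\partial_t\bigl(\mbox{Re}(A\nabla u\cdot\nabla\bar u)\bigr)-\mbox{Re}\bigl(A\partial_t\nabla u\cdot\nabla\bar u\bigr),
\end{eqnarray*}
(which follows from $\partial_tA=0$), the $\partial_t$-term integrates by parts in $t$ to yield a contribution bounded by $\Lambda\int|\nabla u|^2\phi|\partial_t\phi|$, while the residual term is controlled by $\Lambda\int|\nabla u||\partial_t\nabla u|\phi^2$. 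Cauchy--Schwarz combined with Lemma \ref{le1--} applied to the solution $\partial_tu$ (with a slightly larger cutoff $\tilde\phi$) converts the latter into an $\epsilon l(\tilde Q)^{-2}\int_{\mathrm{supp}\,\tilde\phi}|\partial_tu|^2$ contribution, which a standard hole-filling iteration on a nested chain of cubes between $\tilde Q$ and $\beta\tilde Q$ absorbs into the left-hand side.

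The main obstacle is the residual $\mbox{Re}(A\partial_t\nabla u\cdot\nabla\bar u)$: when $A$ is real symmetric (or, more generally, Hermitian) this equals $\mbox{Re}(A\nabla u\cdot\partial_t\nabla\bar u)$, so the identity above collapses the bulk into an exact time derivative and no iteration is needed. For general complex $A$ the two real parts differ, and one must pay with the hole-filling iteration to absorb the small $\int|\partial_tu|^2$ remainder on the right-hand side.
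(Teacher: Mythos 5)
Your argument is correct, but it takes a genuinely different (and somewhat heavier) route than the paper. Both proofs hinge on the same key fact -- since $A$ is independent of $t$, $\partial_t u$ solves the same equation -- and your difference-quotient justification of this, which also legitimizes $\overline{\partial_t u}\,\phi^2$ as a test function, is actually more careful than the paper, which simply asserts it. The difference is in which equation gets tested. You test $\mathcal{H}u=0$ against $\overline{\partial_t u}\,\phi^2$, which produces the awkward bulk term $\int A\nabla u\cdot\partial_t\nabla\bar u\,\phi^2$ and forces you to redistribute the time derivative, integrate by parts in $t$, and then absorb a small multiple of $\int|\partial_t u|^2$ living on a strictly larger set via a hole-filling iteration over nested cubes. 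The paper instead tests $\mathcal{H}(\partial_t u)=0$ against $\bar u\,\phi^4$: the leading term is then $\int (A\nabla\partial_t u\cdot\nabla\bar u)\phi^4$, which Cauchy--Schwarz splits directly into $\epsilon\, l(\tilde Q)^2\int|\nabla\partial_t u|^2\phi^6$ plus $c(\epsilon)l(\tilde Q)^{-2}\int|\nabla u|^2\phi^2$, and the deliberate mismatch of cutoff powers ($\phi^4$ on the left, $\phi^6$ in the gradient term) lets Lemma \ref{le1--} applied to $\partial_t u$ with cutoff $\phi^3$ return $c\,l(\tilde Q)^{-2}\int|\partial_t u|^2\phi^4$ \emph{with the same cutoff}, so the absorption is immediate and no iteration is needed. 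Your approach is valid and your remark that Hermitian $A$ collapses the bulk into an exact time derivative is a nice observation, but you pay for the symmetric placement of derivatives with the extra iteration machinery. One small slip: in your displayed auxiliary inequality the term $|\nabla u|^2\phi^2$ should carry a factor $l(\tilde Q)^{-2}$ (as your own subsequent bookkeeping shows); as written it would only yield $c\,l(\tilde Q)^{-2}$ rather than $c\,l(\tilde Q)^{-4}$ after the final application of Lemma \ref{le1--}.
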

\begin{proof} Let $\phi\in C_0^\infty(\beta\tilde Q)$ be   a cut-off function for $\tilde Q$ such that $0\leq \phi\leq 1$, $\phi=1$ on $\tilde Q$, $|\nabla\phi|\leq c/l(\tilde Q)$, $|\partial_t\phi|\leq c/l(\tilde Q)^2$. Let
\begin{eqnarray*}
J_1:=\int|\partial_t u|^2\phi^4\, dXdt,
\end{eqnarray*}
and
\begin{eqnarray*}
J_2:=\int|\nabla u|^2\phi^2\, dXdt,\ J_3:=\int|\nabla \partial_t u|^2\phi^6\, dXdt.
\end{eqnarray*}
As $\partial_t u$ is a solution to the same equation as $u$,
 \begin{equation*}
\int \biggl( A\nabla \partial_tu\cdot\nabla(\bar u\phi^4)-\partial_t u \partial_t(\bar u\phi^4)\biggr )\, dXdt=0.
\end{equation*}
Hence,
\begin{eqnarray*}
J_1=\int \biggl( (A\nabla\partial_t u\cdot\nabla \bar u) \phi^4+4(A\nabla \partial_tu\cdot\nabla\phi)\bar u\phi^3-4(\partial_t u \partial_t\phi)\bar u\phi^3\biggr )\, dXdt,
\end{eqnarray*}
and
\begin{eqnarray*}
J_1\leq l(\tilde Q)^2\epsilon J_3+\frac {c(\epsilon)}{l(\tilde Q)^2}J_2+ \frac {c(\epsilon)}{l(\tilde Q)^4}\mean{\beta\tilde Q}|u(X,t)|^2\, dXdt
\end{eqnarray*}
where $\epsilon$ is a degree of freedom. Again using that $\partial_t u$ is a solution to the same equation as $u$, and essentially  Lemma \ref{le1--}, we see that
\begin{eqnarray*}
J_3\leq  c\int|\partial_t u|^2\phi^4(|\nabla \phi|^2+|\partial_t\phi|)\, dXdt\leq \frac {c}{l(\tilde Q)^2}J_1.
\end{eqnarray*}
Combining the above estimates, and again using Lemma \ref{le1--},  the lemma follows.
\end{proof}

\subsection{Littlewood-Paley theory}\label{littleth}
We define a parabolic approximation of the identity, which will be fixed throughout the paper, as follows. Let
$\P\in C_0^\infty(Q_1(0))$, $\P\geq 0$ be real-valued,  $\int \P\, dxdt=1$, where $Q_1(0)$ is the unit parabolic cube in $\mathbb R^{n+1}$ centered at $0$. At instances we will also assume that $\int x_i\P(x,t)\, dxdt=0$ for all $i\in \{1,..,n\}$. We set $\P_\lambda(x,t)=\lambda^{-n-2}\P(\lambda^{-1}x,\lambda^{-2}t)$ whenever $\lambda>0$. We let
$\P_\lambda$ denote the convolution operator
$$\P_\lambda f(x,t)=\int_{\mathbb R^{n+1}}\P_\lambda(x-y,t-s)f(y,s)\, dyds.$$
Similarly, by $\mathcal{Q}_\lambda$ we denote a generic approximation to the zero operator, not necessarily the same at each instance, but chosen from a finite set of such
operators depending only on our original choice of $\P_\lambda$. In particular, $\mathcal{Q}_\lambda(x,t)=\lambda^{-n-2}\mathcal{Q}(\lambda^{-1}x,\lambda^{-2}t)$ where
$\mathcal{Q}\in C_0^\infty(Q_1(0))$, $\int \mathcal{Q}\, dxdt=0$. In addition we will, following \cite{HL}, assume that $\mathcal{Q}_\lambda$ satisfies the conditions
\begin{eqnarray*} 
|\mathcal{Q}_\lambda(x,t)|&\leq&\frac {c\lambda}{(\lambda+||(x,t)||)^{n+3}},\notag\\
|\mathcal{Q}_\lambda(x,t)-\mathcal{Q}_\lambda(y,s)|&\leq&\frac {c||(x-y,t-s)||^\alpha}{(\lambda+||(x,t)||)^{n+2+\alpha}},
\end{eqnarray*}
where the latter estimate holds for some $\alpha\in (0,1)$ whenever $2||(x-y,t-s)||\leq ||(x,t)||$. Under these assumptions it is well known that
\begin{eqnarray}\label{li}
	\int_0^\infty\int_{\mathbb R^{n+1}}|\mathcal{Q}_\lambda f|^2\, \frac{dxdtd\lambda}\lambda
		\leq c\int_{\mathbb R^{n+1}}|f|^2\, {dxdt},
\end{eqnarray}
for all $f\in L^2(\mathbb R^{n+1},\mathbb C)$. In the following we collect a number of elementary observations used in the forthcoming sections.
\begin{lemma} \label{little2} Let $\P_\lambda$ be as above. Then
\begin{eqnarray*}
|||\lambda\nabla \P_\lambda f|||+|||\lambda^2\partial_t\P_\lambda f|||+|||\lambda\mathbb D \P_\lambda f|||\leq c||f||_2,
\end{eqnarray*}
for all $f\in L^2(\mathbb R^{n+1},\mathbb C)$.
\end{lemma}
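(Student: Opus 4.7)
The plan is to reduce each of the three square function norms on the left to applications of the Littlewood--Paley bound \eqref{li}, which already handles any operator whose kernel has the scaling, decay, and cancellation of a $\mathcal{Q}_\lambda$. For the first two terms this will be essentially a direct calculation; the main obstacle is the third term $|||\lambda\mathbb D\P_\lambda f|||$, since $\mathbb D$ is a non-local Fourier multiplier and the kernel of $\lambda\mathbb D\P_\lambda$ is therefore not compactly supported. I will handle that term by a Plancherel computation tailored to the parabolic homogeneity built into $\|(\xi,\tau)\|$.

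First I would dispose of $|||\lambda\nabla\P_\lambda f|||$ and $|||\lambda^2\partial_t\P_\lambda f|||$ by writing each as $\mathcal{Q}_\lambda f$ with an explicit $\mathcal{Q}$. For the tangential derivatives, $\lambda\partial_{x_i}\P_\lambda(x,t) = \lambda^{-n-2}(\partial_{y_i}\P)(\lambda^{-1}x,\lambda^{-2}t)$, so $\mathcal{Q}^{(i)} = \partial_{y_i}\P \in C_0^\infty(Q_1(0))$ with mean zero. Similarly $\lambda^2\partial_t\P_\lambda(x,t) = \lambda^{-n-2}(\partial_s\P)(\lambda^{-1}x,\lambda^{-2}t)$. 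For the normal derivative, differentiating $\P_\lambda(x,t) = \lambda^{-n-2}\P(\lambda^{-1}x,\lambda^{-2}t)$ in $\lambda$ gives $\lambda\partial_\lambda\P_\lambda(x,t)=\lambda^{-n-2}\widetilde{\mathcal{Q}}(\lambda^{-1}x,\lambda^{-2}t)$ with
\begin{equation*}
\widetilde{\mathcal{Q}}(y,s) = -(n+2)\P(y,s) - y\cdot\nabla_y\P(y,s) - 2s\partial_s\P(y,s),
\end{equation*}
which lies in $C_0^\infty(Q_1(0))$ and has mean zero by integration by parts. In each case \eqref{li} applies and yields the required bound by $c\|f\|_2$.

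The term $|||\lambda\mathbb D\P_\lambda f|||$ I would handle by Plancherel. The Fourier symbol of the operator $\lambda\mathbb D\P_\lambda$ is $\lambda\|(\xi,\tau)\|\hat\P(\lambda\xi,\lambda^2\tau)$, so Fubini gives
\begin{equation*}
|||\lambda\mathbb D\P_\lambda f|||^2 = \int_{\mathbb R^{n+1}} m(\xi,\tau)\,|\hat f(\xi,\tau)|^2\,d\xi d\tau,\qquad m(\xi,\tau):=\int_0^\infty \lambda^2\|(\xi,\tau)\|^2\,|\hat\P(\lambda\xi,\lambda^2\tau)|^2\,\frac{d\lambda}{\lambda}.
\end{equation*}
Substitute $\mu:=\lambda\|(\xi,\tau)\|$, which sends $d\lambda/\lambda$ to $d\mu/\mu$ and, using the parabolic homogeneity $\|(\gamma\xi,\gamma^2\tau)\|=\gamma\|(\xi,\tau)\|$, maps $(\lambda\xi,\lambda^2\tau)$ to $(\mu\xi',\mu^2\tau')$ where $(\xi',\tau')$ satisfies $\|(\xi',\tau')\|=1$. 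Then
\begin{equation*}
m(\xi,\tau) = \int_0^\infty \mu\,|\hat\P(\mu\xi',\mu^2\tau')|^2\,d\mu.
\end{equation*}
Since $\|(\xi',\tau')\|=1$ forces $(\xi',\tau')$ to lie in a fixed compact set (in fact, on the Euclidean unit sphere of $\mathbb R^{n+1}$, given the definition of $\|(\cdot,\cdot)\|$), and since $\hat\P$ is Schwartz, the integral is uniformly bounded: split at $\mu=1$, estimating the small-$\mu$ piece by $|\hat\P|_\infty^2\int_0^1\mu\,d\mu$ and the large-$\mu$ piece using $|\hat\P(\mu\xi',\mu^2\tau')|\leq C_N(1+\mu)^{-N}$, which holds uniformly in $(\xi',\tau')$ because $|(\mu\xi',\mu^2\tau')|\geq\mu$ for $\mu\geq 1$. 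Thus $\|m\|_\infty\leq c$, and Plancherel gives the required bound.

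Alternatively, one could appeal to the norm equivalence \eqref{uau}(i) to replace the $\mathbb D$-term by $|||\lambda\nabla_{||}\P_\lambda f|||+|||\lambda H_tD_{1/2}^t\P_\lambda f|||$; the first is already controlled, and since $H_t$ is an $L^2$-isometric Fourier multiplier, the second reduces to the Plancherel estimate above applied to the multiplier $\lambda|\tau|^{1/2}\hat\P(\lambda\xi,\lambda^2\tau)$, via the change of variable $\mu=\lambda|\tau|^{1/2}$. The essential point in either route is the same: parabolic homogeneity plus the rapid decay of $\hat\P$.
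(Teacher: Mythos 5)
Your proof is correct and follows essentially the same route the paper intends: the paper disposes of the lemma in one line by appealing to the Littlewood--Paley bound \eqref{li} (deferring details to Lemma 2.30 of \cite{N}), and your argument simply supplies those details — recognizing $\lambda\nabla\P_\lambda$ and $\lambda^2\partial_t\P_\lambda$ as operators of $\mathcal{Q}_\lambda$ type (including the correct computation and cancellation check for $\lambda\partial_\lambda\P_\lambda$), and handling $\lambda\mathbb D\P_\lambda$ by the Plancherel/parabolic-homogeneity computation, which is the natural way to deal with the fact that its kernel is not compactly supported. The multiplier estimate is sound: the explicit factor $\mu$ in $\int_0^\infty\mu|\hat\P(\mu\xi',\mu^2\tau')|^2\,d\mu$ controls the small-$\mu$ regime without needing $\hat\P(0,0)=0$, and the rapid decay of $\hat\P$ together with $|(\mu\xi',\mu^2\tau')|\geq\mu$ on the parabolic unit sphere controls the large-$\mu$ regime.
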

\begin{proof} This lemma essentially follows immediately from \eqref{li}. For slightly more details we refer to the proof of Lemma 2.30 in \cite{N}.
\end{proof}

Consider a cube $Q\subset\mathbb R^{n+1}$. In the following we let $\mathcal{A}_\lambda^Q$ denote the dyadic averaging operator induced by $Q$, i.e., if $\hat Q_\lambda(x,t)$ is the minimal dyadic cube
      (with respect to the grid induced by $Q$) containing $(x,t)$, with side length at least $\lambda$, then
      \begin{eqnarray}\label{dy}\mathcal{A}_\lambda^Q f(x,t):=\mean{\hat Q_\lambda(x,t)}f\, dyds,
      \end{eqnarray}
      is the average of $f$ over $\hat Q_\lambda(x,t)$.

      \begin{lemma} \label{little3} Let $\mathcal{A}_\lambda^Q$ and $\P_\lambda$ be as above. Then
\begin{eqnarray*}
\int_0^\infty\int_{\mathbb R^{n+1}}|(\mathcal{A}_\lambda^Q-\P_\lambda)f|^2\, \frac{dxdtd\lambda}\lambda\leq c\int_{\mathbb R^{n+1}}|f|^2\, {dxdt},
\end{eqnarray*}
for all $f\in L^2(\mathbb R^{n+1},\mathbb C)$.
\end{lemma}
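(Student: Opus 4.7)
Both $\mathcal{A}_\lambda^Q$ and $\P_\lambda$ are probability-preserving averaging operators, so $R_\lambda := \mathcal{A}_\lambda^Q - \P_\lambda$ annihilates constants. I would prove the estimate by a standard quasi-orthogonality / Schur lemma argument against a smooth parabolic Littlewood--Paley family. Fix $\{\mathcal{Q}_s\}_{s>0}$ as in Subsection~\ref{littleth}, chosen so that the parabolic Calder\'on reproducing formula $\int_0^\infty \mathcal{Q}_s^2 \, \frac{ds}{s} = I$ holds on mean-zero $L^2$ functions. Since $R_\lambda 1 = 0$, for every $f \in L^2(\mathbb R^{n+1},\mathbb C)$ we have the (weak) identity
\[
R_\lambda f \;=\; \int_0^\infty R_\lambda \mathcal{Q}_s (\mathcal{Q}_s f) \, \frac{ds}{s}.
\]

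The crux of the proof is the quasi-orthogonality bound
\[
\|R_\lambda \mathcal{Q}_s\|_{L^2 \to L^2} \;\leq\; c\, \gamma(\lambda,s)^\alpha, \qquad \gamma(\lambda,s) := \frac{\min(\lambda,s)}{\max(\lambda,s)},
\]
for some $\alpha \in (0,1)$ uniform in $\lambda, s > 0$. When $s \leq \lambda$, I would exploit $\int \mathcal{Q}_s = 0$ together with the localization of $\mathcal{Q}_s$ at parabolic scale $s$: for $\P_\lambda \mathcal{Q}_s$ this is the standard smooth convolution Littlewood--Paley computation, while for $\mathcal{A}_\lambda^Q \mathcal{Q}_s$ the integral of $\mathcal{Q}_s f$ over a dyadic parabolic cube of sidelength $\sim \lambda$ picks up only a boundary contribution of parabolic measure $\lesssim s\,\lambda^{n+1}$, producing the gain $s/\lambda$ relative to the cube's measure $\lambda^{n+2}$. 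When $s \geq \lambda$, I would use $R_\lambda 1 = 0$ to write
\[
R_\lambda g(x,t) \;=\; \int K_\lambda(x,t,y,\tau)\bigl(g(y,\tau) - g(x,t)\bigr)\, dy\, d\tau,
\]
with the kernel $K_\lambda$ of $R_\lambda$ essentially localized to $\|(x-y,t-\tau)\| \lesssim \lambda$; setting $g = \mathcal{Q}_s f$ and invoking the $\alpha$-H\"older modulus of the kernel of $\mathcal{Q}_s$ at scale $s$ yields a pointwise bound by $(\lambda/s)^\alpha$ times a parabolic Hardy--Littlewood maximal function of $f$, and the $L^2$-boundedness of the maximal operator finishes this case.

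Given the quasi-orthogonality, the conclusion follows by Schur's lemma. Applying Cauchy--Schwarz with the split $\gamma^{\alpha/2} \cdot \gamma^{\alpha/2}\|\mathcal{Q}_s f\|_2$ gives
\[
\|R_\lambda f\|_2^2 \;\leq\; c \int_0^\infty \gamma(\lambda,s)^\alpha\, \|\mathcal{Q}_s f\|_2^2 \, \frac{ds}{s},
\]
using that $\int_0^\infty \gamma(\lambda,s)^\alpha \, ds/s$ is finite uniformly in $\lambda$. Integrating in $\lambda$ against $d\lambda/\lambda$, swapping the order of integration, and applying \eqref{li} to the family $\mathcal{Q}_s$ yields the claimed bound by $c\|f\|_2^2$. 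The main obstacle is the case $s \geq \lambda$ of the quasi-orthogonality bound: since $\mathcal{A}_\lambda^Q$ is not translation invariant and its kernel is only piecewise constant with jumps across the dyadic grid of $Q$, the required regularity must be extracted from $\mathcal{Q}_s$ itself rather than from the kernel of $R_\lambda$, which makes the pointwise estimate more delicate than in the fully smooth setting.
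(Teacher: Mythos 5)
Your proposal is correct and follows essentially the same route as the paper: the paper also uses $R_\lambda 1=0$, a Calder\'on reproducing family $\mathcal{Q}_\sigma$ with $\int_0^\infty\mathcal{Q}_\sigma^2\,d\sigma/\sigma=I$, the quasi-orthogonality bound $\|(\mathcal{A}_\lambda^Q-\P_\lambda)\mathcal{Q}_\sigma\|_{2\to2}\leq c\min\{(\lambda/\sigma)^\delta,(\sigma/\lambda)^\delta\}$ (derived from a size bound and an integrated, not pointwise, modulus of continuity of the kernel of $\mathcal{A}_\lambda^Q-\P_\lambda$, exactly the delicacy you flag), and a Cauchy--Schwarz/Schur splitting combined with \eqref{li}. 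The only cosmetic difference is that the paper runs the final step by duality against $F$ with $|||F|||=1$ rather than estimating $\|R_\lambda f\|_2^2$ directly.
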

\begin{proof} The lemma follows by  orthogonality estimates and we here include a sketch of the proof for completion. Let $F\in C_0^\infty(\mathbb R^{n+2}_+,\mathbb C)$ be such that $|||F|||=1$. It suffices to prove that
\begin{eqnarray*}
\int_0^\infty\int_{\mathbb R^{n+1}}F(x,t,\lambda)\overline{(\mathcal{A}_\lambda^Q-\P_\lambda)f(x,t)}\, \frac{dxdtd\lambda}\lambda\leq c||f||_2,
\end{eqnarray*}
for all $f\in L^2(\mathbb R^{n+1},\mathbb C)$. To prove this we first note that $|(\mathcal{A}_\lambda^Q-\P_\lambda)f(x_0,t_0)|\leq cM(f)(x_0,t_0)$ whenever $(x_0,t_0)\in\mathbb R^{n+1}$ and where
$M$ is the parabolic Hardy-Littlewood maximal function. Hence,
      \begin{eqnarray*}
      \sup_{\lambda>0}||(\mathcal{A}_\lambda^Q-\P_\lambda)||_{2\to 2}\leq c.
      \end{eqnarray*}
       Let  $\mathcal{Q}_\lambda$ be an approximation of the zero operator defined based on a function $\mathcal{Q}$ so normalized that $\mathcal{Q}_\lambda$  is a resolution of the identity, i.e.,
       $$\int_0^\infty\mathcal{Q}_\lambda^2g\, \frac {d\lambda}{\lambda}=g,$$
       whenever $g\in C_0^\infty(\mathbb R^{n+1},\mathbb C)$. Then
       \begin{eqnarray}\label{a1-sea}||(\mathcal{A}_\lambda^Q-\P_\lambda)\mathcal{Q}_\sigma||_{2\to 2}\leq c\min\{(\lambda/\sigma)^\delta,(\sigma/\lambda)^\delta\},\end{eqnarray}
       for some $\delta>0$. Indeed, let $\mathcal{R}_\lambda(x,t,y,s)$ be the kernel associated to $\mathcal{A}_\lambda^Q-\P_\lambda$, i.e.,
       $$\mathcal{R}_\lambda(x,t,y,s)=\frac 1{|\hat Q_\lambda(x,t)|}1_{\hat Q_\lambda(x,t)}(y,s)-\P_\lambda(x-y,t-s).$$
       Then $\mathcal{R}_\lambda 1=0$ and it is easily seen that
       \begin{eqnarray*}
       (i)&&|\mathcal{R}_\lambda(x,t,y,s)|\leq \lambda^\delta (\lambda+||(x,t)||)^{-n-2-\delta},\notag\\
       (ii)&& \int_{\mathbb R^{n+1}}\sup_{\{(z,w):\ ||(z-y,w-s)||\leq\sigma\}} |\mathcal{R}_\lambda(x,t,z,w)-\mathcal{R}_\lambda(x,t,y,s)|\, dyds\leq c(\sigma/\lambda)^\delta,
       \end{eqnarray*}
       whenever $(x,t)\in\mathbb R^{n+1}$, $0<\sigma\leq\lambda<\infty$ and with $\delta=1$. Note that there is an unfortunate statement in
       the corresponding proof in \cite{N}: there $(ii)$ was stated in a pointwise sense which can, obviously, not hold as the indicator function $1_{\hat Q_\lambda(x,t)}$ is not H{\"o}lder continuous. Using $(i)$, $(ii)$, one can, arguing as in the proof of display (3.7) and Remark 3.11 in \cite{HMc}, conclude the validity of  \eqref{a1-sea}. Let
       $h_\delta(\lambda,\sigma):=\min\{(\lambda/\sigma)^\delta,(\sigma/\lambda)^\delta\}.$ We write
       \begin{eqnarray*}
	&&\biggl |\int_0^\infty\int_{\mathbb R^{n+1}}F(x,t,\lambda)\overline{(\mathcal{A}_\lambda^Q-\P_\lambda)f(x,t)}\, \frac{dxdtd\lambda}\lambda\biggr |\notag\\
	&& \qquad = \
\biggl |\int_0^\infty\int_0^\infty\int_{\mathbb R^{n+1}}F(x,t,\lambda)\overline{(\mathcal{A}_\lambda^Q-\P_\lambda)\mathcal{Q}_\sigma^2f}(x,t)\, dxdt\frac{d\lambda}\lambda \frac {d\sigma}{\sigma}\biggr |,
\end{eqnarray*}
Hence, using Cauchy-Schwarz we see that
       \begin{eqnarray*}
	\biggl |\int_0^\infty\int_{\mathbb R^{n+1}}F(x,t,\lambda)\overline{(\mathcal{A}_\lambda^Q-\P_\lambda)f(x,t)}\, \frac{dxdtd\lambda}\lambda\biggr |\leq I_1^{1/2}
I_2^{1/2},
\end{eqnarray*}
where
       \begin{eqnarray*}
I_1&:=&\int_0^\infty\int_0^\infty\int_{\mathbb R^{n+1}}|F(x,t,\lambda)|^2h_\delta(\lambda,\sigma)\, dxdt\frac{d\lambda}\lambda \frac {d\sigma}{\sigma},\notag\\
I_2&:=&\int_0^\infty\int_0^\infty\int_{\mathbb R^{n+1}}|{(\mathcal{A}_\lambda^Q-\P_\lambda)\mathcal{Q}_\sigma^2f}(x,t)|^2(h_\delta(\lambda,\sigma))^{-1}\, dxdt\frac{d\lambda}\lambda \frac {d\sigma}{\sigma}.
\end{eqnarray*}
Integrating with respect to $\sigma$ in $I_1$ we see that $I_1\leq c$. Furthermore, using
 \eqref{a1-sea} we see that
        \begin{eqnarray*}
I_2&\leq& \int_0^\infty\int_0^\infty\int_{\mathbb R^{n+1}}|\mathcal{Q}_\sigma f(x,t)|^2h_\delta(\lambda,\sigma)\, dxdt\frac{d\lambda}\lambda \frac{d\sigma}{\sigma}\notag\\
&\leq& c\int_0^\infty\int_{\mathbb R^{n+1}}|\mathcal{Q}_\sigma f(x,t)|^2\, dxdt \frac {d\sigma}{\sigma}\leq c||f||_2^2.
\end{eqnarray*}
This completes the proof of the lemma. See also the proof of Lemma 4.3 in \cite{HMc}.
\end{proof}

\section{Off-diagonal and uniform $L^2$-estimates for single layer potentials}\label{sec3}

We here establish  a number of elementary and preliminary estimates for single layer potentials. We will consistently only formulate and prove results for
 $\mathcal{S}_\lambda:=\mathcal{S}_\lambda^{\mathcal{H}}$ and for $\lambda>0$, where $\mathcal{H}=\partial_t-\div A\nabla$ is assumed to satisfy \eqref{eq3}-\eqref{eq4} as well as \eqref{eq14+}-\eqref{eq14++}. The corresponding results  for $\mathcal{S}_\lambda^\ast:=\mathcal{S}_\lambda^{\mathcal{H}^\ast}$  follow by analogy. Here we will also use the notation $\div_{||}=\nabla_{||}\cdot$, $D_i=\partial_{x_i}$ for
$i\in\{1,...,n+1\}$. We let
                \begin{eqnarray*}
     (\mathcal{S}_\lambda D_j)f(x,t)&:=&\int_{\mathbb R^{n+1}}\partial_{y_j}\Gamma_\lambda(x,t,y,s)f(y,s)\, dyds,\ 1\leq j\leq n,\notag\\
     (\mathcal{S}_\lambda D_{n+1})f(x,t)&:=&\int_{\mathbb R^{n+1}}\partial_{\sigma}\Gamma(x,t,\lambda,y,s,\sigma)|_{\sigma=0}f(y,s)\, dyds.
     \end{eqnarray*}
We set
              \begin{eqnarray*}
     (\mathcal{S}_\lambda\nabla)&:=&((\mathcal{S}_\lambda D_1),...,(\mathcal{S}_\lambda D_n),(\mathcal{S}_\lambda D_{n+1})),\notag\\
     (\mathcal{S}_\lambda\nabla\cdot){\bf f}&:=&\sum_{j=1}^{n+1}(\mathcal{S}_\lambda D_j)f_j,
     \end{eqnarray*}
     whenever ${\bf f}=(f_1,...,f_{n+1})$ and we note that
     \begin{eqnarray*}
     (\mathcal{S}_\lambda\nabla_{||})\cdot{\bf f}_{||}(x,t)=-\mathcal{S}_\lambda(\div_{||}{\bf f}_{||}),\quad
      (\mathcal{S}_\lambda D_{n+1})=-\partial_\lambda\mathcal{S}_\lambda,
     \end{eqnarray*}
     whenever ${\bf f}=({\bf f}_{||},f_{n+1}) \in C_0^\infty(\mathbb R^{n+1},\mathbb C^{n+1})$ as the fundamental solution is translation invariant in the $\lambda$-variable.  Given a function $f\in L^2(\mathbb R^{n+1},\mathbb C)$, and $h=(h_1,...,h_{n+1})\in \mathbb R^{n+1}$, we let $(\mathbb D^hf)(x,t)=f(x_1+h_1,...,x_n+h_n,t+h_{n+1})-f(x,t)$. Given $m\geq -1$, $l\geq -1$ we let
     \begin{eqnarray}\label{kernel}
      K_{m,\lambda}(x,t,y,s)&:=&\partial_\lambda^{m+1}\Gamma_\lambda(x,t,y,s),\notag\\
     K_{m,l,\lambda}(x,t,y,s)&:=&\partial_t^{l+1}\partial_\lambda^{m+1}\Gamma_\lambda(x,t,y,s),
     \end{eqnarray}
     and we introduce
     \begin{eqnarray*}
     d_\lambda(x,t,y,s):=|x-y|+|t-s|^{1/2} +\lambda.
     \end{eqnarray*}
\begin{lemma}\label{le2+} Consider  $m\geq -1$, $l\geq -1$. Then there exists constants $c_{m,l}$ and $\alpha\in (0,1)$, depending at most on $n$, $\Lambda$, the De Giorgi-Moser-Nash constants, $m$, $l$, such that
\begin{eqnarray*}
(i)&&|K_{m,l,\lambda}(x,t,y,s)|\leq c_{m,l}({d_\lambda(x,t,y,s)})^{-n-m-2l-4},\notag\\
(ii)&&|(\mathbb D^hK_{m,l,\lambda}(\cdot,\cdot,y,s))(x,t)|\leq c_{m,l}||h||^\alpha({d_\lambda(x,t,y,s)})^{-n-m-2l-4-\alpha},\notag\\
(iii)&&|(\mathbb D^hK_{m,l,\lambda}(x,t,\cdot,\cdot))(y,s)|\notag\leq c_{m,l}||h||^\alpha({d_\lambda(x,t,y,s)})^{-n-m-2l-4-\alpha},
\end{eqnarray*}
whenever $2||h||\leq ||(x-y,t-s)||$ or $2||h||\leq\lambda$.
\end{lemma}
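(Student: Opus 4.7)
The plan is to combine the Gaussian pointwise estimate for the semigroup kernel $K_t(X,Y)$ quoted from \cite[Definition 2, p.29]{AT} with iterated applications of the interior estimates proved in Section \ref{sec2}. First, the Gaussian bound gives
\[
|\Gamma_\lambda(x,t,y,s)|\leq c(t-s)^{-(n+1)/2}\exp\bigl(-c(|x-y|^2+\lambda^2)/(t-s)\bigr)
\]
for $t-s>0$, and $\Gamma_\lambda\equiv 0$ otherwise. A routine optimization in $t-s$ shows that this is dominated by $c\,d_\lambda(x,t,y,s)^{-n-1}$, which is precisely the case $m=l=-1$ of (i).

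For general $m,l\geq-1$, I would fix $(y,s)\in\mathbb R^{n+1}$ and regard $u(X,T):=\Gamma(X,T,y,s)$ as a weak solution of $\mathcal{H}u=0$ in $\mathbb R^{n+2}\setminus\{(y,0,s)\}$. Since the coefficients are independent of $x_{n+1}$ and $T$, the iterated derivative $\partial_{x_{n+1}}^{m+1}\partial_T^{l+1}u$ is itself a solution of the same equation. Set $r:=d_\lambda(x,t,y,s)/M$ for a large constant $M$, and let $\tilde Q$ be a parabolic cube centered at $(x,\lambda,t)$ of size $r$. Iterating Lemma \ref{le1}(ii) once for each of the $m+1$ derivatives in $\lambda$, and Lemma \ref{le1a} once for each of the $l+1$ derivatives in $t$, on a chain of slightly enlarged concentric cubes, yields
\[
\mean{\tilde Q}|\partial_\lambda^{m+1}\partial_t^{l+1}\Gamma_\lambda|^2\leq c\,r^{-2(m+1)-4(l+1)}\mean{\beta\tilde Q}|\Gamma_\lambda|^2.
\]
On $\beta\tilde Q$ (for a suitable $\beta$ depending on $m,l$), the Gaussian bound from the base case controls the right-hand side by $c\,r^{-2(n+m+2l+4)}$, provided $M$ is taken large enough that $\beta\tilde Q$ stays comparable to $d_\lambda$ away from the pole. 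Promoting the resulting $L^2$-mean to a pointwise bound via Moser's estimate \eqref{eq14+} (again valid because the derivative is still a solution) delivers (i).

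For (ii) and (iii), I would apply the De Giorgi--Moser--Nash H\"older estimate \eqref{eq14++} to the solution $\partial_\lambda^{m+1}\partial_t^{l+1}\Gamma_\lambda$ on a single parabolic cube of radius $\sim d_\lambda$ containing both endpoints. The hypothesis $2\|h\|\leq\|(x-y,t-s)\|$ or $2\|h\|\leq\lambda$ ensures that such a cube exists on which $u$ is a smooth solution. The gain $(\|h\|/r)^\alpha$ produced by \eqref{eq14++}, together with the $L^p$-mean bound already given by (i), yields (ii). For (iii), I would use the identity $\Gamma_\lambda(x,t,y,s)=\overline{\Gamma^\ast_\lambda(y,s,x,t)}$: as a function of $(y,s)$ the kernel solves the adjoint equation $\mathcal{H}^\ast$, which by assumption also satisfies \eqref{eq14+}--\eqref{eq14++}, so the identical argument applies.

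The main obstacle is essentially bookkeeping: one must verify that the nested cubes used in the iteration stay uniformly away from the singularity $(y,0,s)$, which requires a short case distinction according to whether $\lambda$ dominates or is dominated by $|x-y|+|t-s|^{1/2}$. In both regimes the Gaussian bound controls the enlarged cube uniformly, so the same final estimate $d_\lambda^{-n-m-2l-4}$ emerges, with the exponent accounting for $n+1$ from the base Gaussian bound, $m+1$ from the $\lambda$-derivatives, and $2(l+1)$ from the parabolic scaling of the $t$-derivatives.
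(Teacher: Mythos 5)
Your proposal is correct and follows essentially the same route as the paper: the case $m=l=-1$ from the pointwise (Gaussian) bounds on the kernel implied by the standing De Giorgi--Moser--Nash hypotheses, then induction on $m$ and $l$ using that $\partial_\lambda$- and $\partial_t$-derivatives of solutions are again solutions (by \eqref{eq4}) together with Lemma \ref{le1--}/Lemma \ref{le1} for the $\lambda$-derivatives and Lemma \ref{le1a} for the $t$-derivatives, with \eqref{eq14+} promoting means to pointwise bounds, \eqref{eq14++} giving $(ii)$, and the adjoint identity giving $(iii)$. If anything, your observation that the natural domain is $\mathbb R^{n+2}\setminus\{(y,0,s)\}$ with cubes of size comparable to $d_\lambda$ is cleaner than the paper's choice of cube, and your exponent bookkeeping ($n+1$, plus $m+1$, plus $2(l+1)$) is exactly right.
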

\begin{proof} Assume first that $l=-1$. Then $K_{m,l,\lambda}=K_{m,\lambda}$. In the case $m=-1$ the estimates in $(i)-(iii)$ follow from  \eqref{eq14+} and
     \eqref{eq14++}, see also \cite{A} and Section 1.4 in \cite{AT}. In the cases $m\geq 0$, the corresponding estimates  follow by induction using \eqref{eq14+}, \eqref{eq14++}, Lemma \ref{le1--} and Lemma \ref{le1}. This establishes the estimates in $(i)-(iii)$
     for $K_{m,-1,\lambda}$ whenever $m\geq -1$.  We next consider the case of $K_{m,l,\lambda}$, $l\geq 0$. Fix $(y,s)\in\mathbb R^{n+1}$ and let $u=u(x,t,\lambda)=K_{m,l,\lambda}(x,t,y,s)$ for some $l\geq 0$. Given $(x,t,\lambda)\in \mathbb R_+^{n+2}$, let $\tilde Q\subset\mathbb R^{n+2}$ be the largest parabolic cube centered at $(x,t,\lambda)$  such that
     $16\tilde Q\subset\mathbb R^{n+2}_+$ and such $\mathcal{H}u=0$ in $16\tilde Q$. Then $l(\tilde Q)\approx \min\{\lambda,||(x-y,t-s)||\}$, and
\begin{eqnarray*}
|\partial_t u(x,t,\lambda)|\leq c\biggl (\mean{2\tilde Q}|\partial_t u|^2\, dXdt\biggr )^{1/2},
\end{eqnarray*}
by \eqref{eq14+} as $\partial_t u$ is a solution to the same equation as $u$. Using Lemma \ref{le1a} we can therefore conclude that
\begin{eqnarray*}
|\partial_t u(x,t,\lambda)|^2\leq \frac{c}{l(\tilde Q)^4}\biggl (\mean{8\tilde Q}|u|^2\, dXdt\biggr ).
\end{eqnarray*}
Using this and induction,  the estimate in $(i)$ follows for $K_{m,l,\lambda}(x,t,y,s)$ whenever $l\geq -1$. Using \eqref{eq14++}, the estimates in $(ii)$ and $(iii)$ are proved similarly.
\end{proof}

\begin{lemma}\label{le3} Consider  $m\geq -1$, $l\geq -1$. Then there exists a constant
     $c_{m,l}$, depending at most on $n$, $\Lambda$, the De Giorgi-Moser-Nash constants, $m$, $l$, such that the following holds whenever $Q\subset\mathbb R^{n+1}$ is a parabolic cube, $k\geq 1$ is an integer and $(x,t)\in Q$.
\begin{eqnarray*}
(i)&&\int_{2^{k+1}Q\setminus 2^kQ}|(2^kl(Q))^{m+2l+3}\nabla_yK_{m,l,\lambda}(x,t,y,s)|^2dy ds\leq c_{m,l}(2^kl(Q))^{-n-2},\notag\\
(ii)&&\int_{2Q}|(l(Q))^{m+2l+3}\nabla_yK_{m,l,\lambda}(x,t,y,s)|^2dy ds\leq c_{m,l,\rho}(l(Q))^{-n-2},\notag\\
&&\mbox{whenever $\rho>1$,  } l(Q)/\rho\leq\lambda\leq \rho l(Q).
\end{eqnarray*}
\end{lemma}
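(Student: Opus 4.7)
The plan is to combine the pointwise kernel decay from Lemma \ref{le2+}(i) with the Caccioppoli-type trace estimate from Lemma \ref{le1}(ii), applied to an off-boundary extension of $K_{m,l,\lambda}$. Fix $(x,t)\in Q$ and, for $(y,s,\sigma)\in\mathbb R^{n+2}$, define
\begin{equation*}
v(y,s,\sigma):=\partial_t^{l+1}\partial_\lambda^{m+1}\Gamma(x,t,\lambda,y,s,\sigma),
\end{equation*}
so that $v(y,s,0)=K_{m,l,\lambda}(x,t,y,s)$. Because $A$ is independent of $\lambda$ and $t$, translation invariance reduces $v(y,s,\sigma)$ to a $K_{m,l,\cdot}$-type kernel in the parameter $\lambda-\sigma$, and the bound in Lemma \ref{le2+}(i) (extended to both signs of $\lambda-\sigma$ via the underlying Gaussian estimate) yields
\begin{equation*}
|v(y,s,\sigma)|\leq c_{m,l}\bigl(|x-y|+|t-s|^{1/2}+|\lambda-\sigma|\bigr)^{-n-m-2l-4}.
\end{equation*}
Moreover $\bar v$ satisfies $\mathcal{H}^\ast\bar v=0$ in $\mathbb R^{n+2}\setminus\{(x,t,\lambda)\}$, since the pole-variable differentiations $\partial_t^{l+1}\partial_\lambda^{m+1}$ commute with the adjoint equation in $(y,s,\sigma)$.

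For part (i), set $r:=2^kl(Q)$ and, using $k\geq 1$, cover the annulus $A_k:=2^{k+1}Q\setminus 2^kQ$ by a dimension-controlled number of parabolic cubes $Q_j\subset\mathbb R^{n+1}$ of side comparable to $r$, chosen so that the enlargements $4Q_j$ lie at horizontal parabolic distance $\geq r/4$ from $(x,t)$. For each $Q_j$ the slab $4Q_j\times(-l(Q_j)/2,\,l(Q_j)/2)$ therefore avoids the pole $(x,t,\lambda)$, $\bar v$ solves $\mathcal{H}^\ast$ on it, and by the pointwise bound $|v|\leq c\,r^{-n-m-2l-4}$ throughout. Applying Lemma \ref{le1}(ii) at $\lambda_0=0$ (with $\beta_1=2$, $\beta_2=1$) and using $|\nabla_y v|\leq|\nabla v|$ gives
\begin{equation*}
\int_{Q_j}|\nabla_y v(y,s,0)|^2\,dyds \leq c\,|Q_j|\,r^{-2}\,\sup|v|^2 \leq c\,r^{n+2}\cdot r^{-2}\cdot r^{-2n-2m-4l-8}=c\,r^{-n-2m-4l-8}.
\end{equation*}
Summing over the $O(1)$ cubes covering $A_k$ and multiplying by the weight $r^{2(m+2l+3)}$ produces exactly the bound $c\,r^{-n-2}=c(2^kl(Q))^{-n-2}$ stated in (i).

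Part (ii) proceeds by the same recipe, applied to $Q$ itself in place of the $Q_j$. The hypothesis $l(Q)/\rho\leq\lambda\leq\rho l(Q)$ ensures that the pole $(x,t,\lambda)$ is separated from the slab $4Q\times\bigl(-l(Q)/(4\rho),\,l(Q)/(4\rho)\bigr)$ by parabolic distance $\gtrsim l(Q)/\rho$; in particular $\bar v$ solves $\mathcal{H}^\ast$ there and $|v|\leq c_\rho\,l(Q)^{-n-m-2l-4}$ on it. Applying Lemma \ref{le1}(ii) with $\beta_2:=1/(4\rho)$ (absorbing $\rho$ into the constants) and repeating the arithmetic with $l(Q)$ in place of $r$ yields $\int_{2Q}|\nabla_y v|^2\leq c_\rho\,l(Q)^{-n-2m-4l-8}$, from which (ii) follows after multiplication by $l(Q)^{2(m+2l+3)}$. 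The only mildly delicate step is verifying that $\bar v$ truly solves the adjoint equation in the extended variable $(y,s,\sigma)$; this follows from commuting $\partial_t^{l+1}\partial_\lambda^{m+1}$ past the identity $\mathcal{H}^\ast_{(y,s,\sigma)}\bar\Gamma(x,t,\lambda,y,s,\sigma)=0$, justified by the regularity of $\Gamma$ away from the diagonal guaranteed by \eqref{eq14+}-\eqref{eq14++}.
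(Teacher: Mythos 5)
Your proof is correct and follows essentially the same route as the paper: fix $(x,t)$, extend $\overline{K_{m,l,\lambda}(x,t,\cdot,\cdot)}$ to an adjoint solution in the $(y,s,\sigma)$-variables away from the pole, and combine the trace Caccioppoli estimate of Lemma \ref{le1}~$(ii)$ (applied on the annulus, resp.\ on $2Q$) with the pointwise bounds of Lemma \ref{le2+}~$(i)$. You have merely filled in the covering and exponent bookkeeping that the paper leaves implicit.
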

\begin{proof}  Fix $(x,t)\in Q$ and let
$$v(y,s,\lambda):=\overline{K_{m,l,\lambda}(x,t,y,s)}.$$
Then $v$ is a solution to the adjoint equation. The lemma now follows from Lemma \ref{le1} $(ii)$, applied to the adjoint equation, and Lemma \ref{le2+} $(i)$. Indeed, it is easy to see that Lemma \ref{le1} also is valid in when $Q$ is replaced by the annular region $2^{k+1}Q\setminus 2^kQ$.
\end{proof}

\begin{lemma}\label{le4} Consider  $m\geq -1$, $l\geq -1$. Then there exists a constant
     $c_{m,l}$, depending at most on $n$, $\Lambda$, the De Giorgi-Moser-Nash constants, $m$, $l$, such that the following holds whenever $Q\subset\mathbb R^{n+1}$ is a parabolic cube, $k\geq 1$ is an integer. Let ${\bf f}
     \in L^2(\mathbb R^{n+1},\mathbb C^{n})$, ${f}
     \in L^2(\mathbb R^{n+1},\mathbb C)$. Then
     \begin{eqnarray*}
(i)&&||\partial_t^{l+1}\partial_\lambda^{m+1}(\mathcal{S}_\lambda\nabla_{||}\cdot)({\bf f}1_{2^{k+1}Q\setminus 2^kQ})||_{L^2(Q)}^2 \leq \ c_{m,l}2^{-(n+2)k}(2^kl(Q))^{-2m-4l-6}||{\bf f}||^2_{L^2(2^{k+1}Q\setminus 2^kQ)},\notag\\
(ii)&&||\partial_t^{l+1}\partial_\lambda^{m+1}(\mathcal{S}_\lambda\nabla_{||}\cdot)({\bf f}1_{2Q})||_{L^2(Q)}^2
	\ \leq \ c_{m,l,\rho}(l(Q))^{-2m-4l-6}||{\bf f}||^2_{L^2(2Q)},\notag\\
&&\mbox{whenever $\rho>1$, $l(Q)/\rho\leq\lambda\leq \rho l(Q)$},\notag\\
(iii)&&||\partial_t^{l+1}\partial_\lambda^{m+1}(\mathcal{S}_\lambda)({f}1_{2^{k+1}Q\setminus 2^kQ})||_{L^2(Q)}^2\leq \ c_{m,l}2^{-(n+2)k}(2^kl(Q))^{-2m-4l-4}||{f}||^2_{L^2(2^{k+1}Q\setminus 2^kQ)},\notag\\
(iv)&&||\partial_t^{l+1}\partial_\lambda^{m+1}(\mathcal{S}_\lambda)({f}1_{2Q})||_{L^2(Q)}^2
\ \leq \
c_{m,l,\rho}(l(Q))^{-2m-4l-4}||{f}||^2_{L^2(2Q)},\notag\\
&&\mbox{whenever $\rho>1$, $l(Q)/\rho\leq\lambda\leq \rho l(Q)$}.
\end{eqnarray*}
\end{lemma}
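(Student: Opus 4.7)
The plan is to deduce all four estimates directly from Lemma \ref{le2+}(i) and Lemma \ref{le3} by Cauchy--Schwarz together with a volume calculation; no further machinery is needed.

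First, for parts $(i)$ and $(ii)$ I would differentiate under the integral sign in the definition of $(\mathcal{S}_\lambda\nabla_{||}\cdot)$ to write, for ${\bf f}=(f_1,\dots,f_n)$,
\[
\partial_t^{l+1}\partial_\lambda^{m+1}(\mathcal{S}_\lambda\nabla_{||}\cdot){\bf f}(x,t)=\sum_{j=1}^n\int_{\mathbb R^{n+1}}\partial_{y_j}K_{m,l,\lambda}(x,t,y,s)f_j(y,s)\,dyds.
\]
Fixing $(x,t)\in Q$, Cauchy--Schwarz applied to the integral over $2^{k+1}Q\setminus 2^kQ$ yields
\[
\bigl|\partial_t^{l+1}\partial_\lambda^{m+1}(\mathcal{S}_\lambda\nabla_{||}\cdot)({\bf f}1_{2^{k+1}Q\setminus 2^kQ})(x,t)\bigr|^2\leq \|{\bf f}\|^2_{L^2(2^{k+1}Q\setminus 2^kQ)}\int_{2^{k+1}Q\setminus 2^kQ}|\nabla_yK_{m,l,\lambda}(x,t,y,s)|^2\,dyds.
\]
By Lemma \ref{le3}$(i)$ the last integral is at most $c(2^kl(Q))^{-2m-4l-6-n-2}$. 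Integrating over $Q$ produces a factor $|Q|=l(Q)^{n+2}$, and rearranging the powers of $2^k$ produces exactly $2^{-(n+2)k}(2^kl(Q))^{-2m-4l-6}$ as claimed in $(i)$. Part $(ii)$ proceeds verbatim with Lemma \ref{le3}$(ii)$ in place of $(i)$; the hypothesis $l(Q)/\rho\leq\lambda\leq\rho l(Q)$ is precisely what is needed to invoke that estimate.

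For parts $(iii)$ and $(iv)$ the same template works, but with the pointwise bound from Lemma \ref{le2+}$(i)$ replacing Lemma \ref{le3}. In $(iii)$, for $(x,t)\in Q$ and $(y,s)\in 2^{k+1}Q\setminus 2^kQ$ with $k\geq 1$, the separation between $Q$ and the complement of $2Q$ forces $d_\lambda(x,t,y,s)\geq c\cdot 2^kl(Q)$, so $|K_{m,l,\lambda}|^2\leq c(2^kl(Q))^{-2(n+m+2l+4)}$; multiplying by the measure of the shell, which is at most $c(2^kl(Q))^{n+2}$, and then by $|Q|=l(Q)^{n+2}$ produces the right-hand side after collecting powers. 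In $(iv)$, the hypothesis $\lambda\asymp l(Q)$ gives $d_\lambda\geq\lambda\gtrsim l(Q)$ uniformly on $2Q$, and an identical computation yields the bound.

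No genuine obstacle is expected; the proof is a routine Schur-type argument. The only bookkeeping to watch is the counting of exponents when combining the three factors---decay from the kernel bound, measure of the region of integration in $(y,s)$, and the factor $|Q|$ from integrating in $(x,t)$---which explains the exponent $-2m-4l-6$ in the $\nabla_{||}\cdot$ version versus the exponent $-2m-4l-4$ obtained for $\mathcal{S}_\lambda$ alone (where one power of length scale is saved because no derivative falls on the kernel).
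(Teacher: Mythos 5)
Your proposal is correct and follows essentially the same route as the paper: Cauchy--Schwarz in $(y,s)$ combined with Lemma \ref{le3} for the $\nabla_{||}\cdot$ versions and with the pointwise kernel bound of Lemma \ref{le2+}\,$(i)$ for the plain $\mathcal{S}_\lambda$ versions, followed by integration over $Q$ and the same exponent bookkeeping. The only cosmetic slip is in your closing remark (the gap between the two exponents is two powers of the length scale in the squared norms, one power in the unsquared norms), which does not affect the argument.
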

\begin{proof} Let $(x,t)\in Q$. To prove $(i)$ we note that
 \begin{eqnarray*}
 &&|\partial_t^{l+1}\partial_\lambda^{m+1}(\mathcal{S}_\lambda\nabla_{||}\cdot)({\bf f}1_{2^{k+1}Q\setminus 2^kQ})(x,t)|^2\notag\\
 && \qquad = \ \biggl |\int_{2^{k+1}Q\setminus 2^kQ}\nabla_yK_{m,l,\lambda}(x,t,y,s)\cdot{\bf f}(y,s)\, dyds\biggr|^2\notag\\
 && \qquad \leq \ ||\nabla_yK_{m,l,\lambda}(x,t,y,s)||_{L^2(2^{k+1}Q
 \setminus 2^kQ)}^2||{\bf f}||^2_{L^2(2^{k+1}Q\setminus 2^kQ)}\notag\\
 && \qquad \leq \ c_{m,l}(2^kl(Q))^{-n-2m-4l-8}||{\bf f}||^2_{L^2(2^{k+1}Q\setminus 2^kQ)},
\end{eqnarray*}
by Lemma \ref{le3} $(i)$. Hence, integrating with respect to $(x,t)$ we see that
\begin{eqnarray*}
&&||\partial_t^{l+1}\partial_\lambda^{m+1}(\mathcal{S}_\lambda\nabla_{||}\cdot)({\bf f}1_{2^{k+1}Q\setminus 2^kQ})||_{L^2(Q)}^2\notag\\
&& \qquad \leq \ c_{m,l}(l(Q))^{n+2}(2^kl(Q))^{-n-2m-4l-8}||{\bf f}||^2_{L^2(2^{k+1}Q\setminus 2^kQ)}\notag\\
&&\qquad \leq \ c_{m,l}2^{-(n+2)k}(2^kl(Q))^{-2m-4l-6}||{\bf f}||^2_{L^2(2^{k+1}Q\setminus 2^kQ)}.
\end{eqnarray*}
This completes the proof of $(i)$. The proof of $(ii)$ is similar. To prove $(iii)$ we again consider $(x,t)\in Q$. Then
 \begin{eqnarray*}
 &&|\partial_t^{l+1}\partial_\lambda^{m+1}(\mathcal{S}_\lambda)({f}1_{2^{k+1}Q\setminus 2^kQ})(x,t)|^2\notag\\
 && \qquad = \ \biggl |\int_{2^{k+1}Q\setminus 2^kQ}K_{m,l,\lambda}(x,t,y,s){f}(y,s)\, dyds\biggr|^2\notag\\
 && \qquad \leq \ ||K_{m,l,\lambda}(x,t,y,s)||_{L^2(2^{k+1}Q
 \setminus 2^kQ)}^2||{f}||^2_{L^2(2^{k+1}Q\setminus 2^kQ)}\notag\\
 && \qquad \leq \ c_{m,l}(2^kl(Q))^{-n-2m-4l-6}||{f}||^2_{L^2(2^{k+1}Q\setminus 2^kQ)}.
\end{eqnarray*}
We can now proceed as above to complete the proof of $(iii)$. The proof of $(iv)$ is similar.
\end{proof}

\begin{lemma}\label{le5}  Assume $m\geq -1$, $l\geq -1$, $m+2l\geq -2$,  Then there exists a constant
     $c_{m,l}$, depending at most on $n$, $\Lambda$, the De Giorgi-Moser-Nash constants, $m$, $l$, such that the following holds. Let ${\bf f}
     \in L^2(\mathbb R^{n+1},\mathbb C^{n})$ and ${f}
     \in L^2(\mathbb R^{n+1},\mathbb C)$. Then
 \begin{eqnarray*}
(i)&&\sup_{\lambda>0}||\lambda^{m+2l+3}\partial_t^{l+1}\partial_\lambda^{m+1}(\mathcal{S}_\lambda\nabla_{||}\cdot){\bf f}||_2\leq c_{m,l}||{\bf f}||_2,\notag\\
(ii)&&\sup_{\lambda>0} ||\lambda^{m+2l+3}\partial_t^{l+1}\partial_\lambda^{m+1}(\nabla_{||} \mathcal{S}_\lambda f)||_2\leq c_{m,l}||{f}||_2.
\end{eqnarray*}
Furthermore, if $m+2l\geq -1$ then
 \begin{eqnarray*}
(iii)&&\sup_{\lambda>0} ||\lambda^{m+2l+2}\partial_t^{l+1}\partial_\lambda^{m+1} (\mathcal{S}_\lambda f)||_2\leq c_{m,l}||{f}||_2.
\end{eqnarray*}
\end{lemma}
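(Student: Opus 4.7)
My plan is to combine the off-diagonal $L^2$ estimates of Lemma \ref{le4} with a parabolic dyadic decomposition at the scale $l(Q)=\lambda$, summed via a two-parameter weighted Cauchy-Schwarz inequality whose parameter range is tailored to the admissibility conditions $m+2l\ge-2$ in $(i)$ and $m+2l\ge-1$ in $(iii)$. Estimate $(ii)$ will then be deduced from $(i)$ by $L^2$-duality, exploiting that $\mathcal{H}^\ast$ satisfies the same hypotheses.

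To prove $(i)$, fix $\lambda>0$ and let $\{Q_j\}_j$ partition $\mathbb R^{n+1}$ into parabolic cubes of side length $\lambda$. Put $A_0^j:=2Q_j$ and $A_k^j:=2^{k+1}Q_j\setminus 2^kQ_j$ for $k\ge1$, and set $T:=\lambda^{m+2l+3}\partial_t^{l+1}\partial_\lambda^{m+1}(\mathcal{S}_\lambda\nabla_{||}\cdot)$. Lemma \ref{le4}$(ii)$ (with $\rho=2$) and Lemma \ref{le4}$(i)$, after multiplication by $\lambda^{m+2l+3}$ and using $l(Q_j)=\lambda$, give the unified off-diagonal bound
\[
\|T(\mathbf{f}\,1_{A_k^j})\|_{L^2(Q_j)}\le c\,2^{-\alpha k}\|\mathbf{f}\|_{L^2(A_k^j)},\qquad\alpha:=\tfrac{n+2}{2}+m+2l+3,
\]
for every $k\ge0$. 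The splitting $\mathbf{f}=\sum_k\mathbf{f}\,1_{A_k^j}$ combined with the weighted Cauchy-Schwarz inequality
\[
\Bigl(\sum_ka_kb_k\Bigr)^2\le\Bigl(\sum_ka_k^2\gamma^{-k}\Bigr)\Bigl(\sum_kb_k^2\gamma^k\Bigr),\qquad\gamma>2^{-2\alpha},
\]
followed by summation over $j$ and the fact that each $(y,s)\in\mathbb R^{n+1}$ belongs to at most $c\,2^{k(n+2)}$ of the dilates $\{2^{k+1}Q_j\}_j$, will produce
\[
\|T\mathbf{f}\|_2^2\le C\sum_{k\ge0}(2^{n+2}\gamma)^k\|\mathbf{f}\|_2^2.
\]
The right-hand side is finite provided $\gamma<2^{-(n+2)}$, and the interval $(2^{-2\alpha},2^{-(n+2)})$ is non-empty exactly when $2\alpha>n+2$, i.e.\ when $m+2l\ge-2$, which is the assumed range.

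Estimate $(iii)$ is proved by the identical scheme using Lemma \ref{le4}$(iii)$-$(iv)$ in place of $(i)$-$(ii)$; the relevant exponent becomes $\alpha=(n+2)/2+m+2l+2$, and the compatibility condition $2\alpha>n+2$ now reads $m+2l\ge-1$, matching the hypothesis. For $(ii)$, using that $\Gamma_\lambda(x,t,y,s)$ depends on $(t,s)$ only through $t-s$ (so $\partial_t^{l+1}=(-1)^{l+1}\partial_s^{l+1}$ on the kernel) and integrating by parts in $x$, one checks that the $L^2$-adjoint of $\lambda^{m+2l+3}\partial_t^{l+1}\partial_\lambda^{m+1}\nabla_{||}\mathcal{S}_\lambda$ is, up to an overall sign, equal to $\lambda^{m+2l+3}\partial_s^{l+1}\partial_\lambda^{m+1}(\mathcal{S}_\lambda^\ast\nabla_{||}\cdot)$, an operator of the form appearing in $(i)$ but associated to $\mathcal{H}^\ast$; applying the already established $(i)$ to $\mathcal{H}^\ast$ then yields $(ii)$ by duality.

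The main obstacle — and the only step that is not a direct invocation of Lemma \ref{le4} — will be the weighted Cauchy-Schwarz manoeuvre: a single summable weight over the annuli would yield only the much more restrictive range $m+2l>(n-4)/2$, whereas the two-parameter weight $\gamma^{\pm k}$ is precisely what is needed to capture the sharp range $m+2l\ge-2$ (respectively $-1$) stated in the lemma.
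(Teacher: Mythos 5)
Your proof is correct and follows essentially the same route as the paper: a decomposition into parabolic cubes of side length comparable to $\lambda$, the off-diagonal bounds of Lemma \ref{le4}, summation using the bounded-overlap count $c_n2^{(n+2)k}$, and duality to pass from $(i)$ to $(ii)$; the exponent bookkeeping yielding the ranges $m+2l\geq -2$ and $m+2l\geq -1$ matches the paper's. The only difference is cosmetic: you make explicit the weighted Cauchy--Schwarz step over the annuli, which the paper's proof leaves implicit when it passes from the square of the sum to the sum of squares.
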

\begin{proof} We first note that to prove $(ii)$ it suffices to only prove $(i)$, as, by duality, $(ii)$  follows from $(i)$ applied to $\mathcal{S}_\lambda^\ast$. To prove
$(i)$, fix $\lambda>0$ and consider $m\geq -1$, $l\geq -1$. Then
 \begin{eqnarray*}
&&||\lambda^{m+2l+3}\partial_t^{l+1}\partial_\lambda^{m+1}(\mathcal{S}_\lambda\nabla_{||}\cdot){\bf f}||_{2}^2
\ \leq \ \sum_Q\int_Q|\lambda^{m+2l+3}\partial_t^{l+1}\partial_\lambda^{m+1}(\mathcal{S}_\lambda\nabla_{||}\cdot){\bf f}(x,t)|^2\, dxdt,
\end{eqnarray*}
where the sum runs over the dyadic grid of parabolic cubes with $l(Q)\approx\lambda$. With $Q$ fixed we see that
 \begin{eqnarray*}
&&\int_Q|\lambda^{m+2l+3}\partial_t^{l+1}\partial_\lambda^{m+1}(\mathcal{S}_\lambda\nabla_{||}\cdot){\bf f}(x,t)|^2\, dxdt\notag\\
&& \qquad \leq \ \int_Q|\lambda^{m+2l+3}\partial_t^{l+1}\partial_\lambda^{m+1}(\mathcal{S}_\lambda\nabla_{||}\cdot)({\bf f}1_{2Q})(x,t)|^2\, dxdt\notag\\
&& \qquad \qquad + \ \sum_{k\geq 1}\int_Q|\lambda^{m+2l+3}\partial_t^{l+1}\partial_\lambda^{m+1}(\mathcal{S}_\lambda\nabla_{||}\cdot)({\bf f}1_{2^{k+1}Q\setminus 2^kQ})(x,t)|^2\, dxdt\notag\\
&&\qquad \leq \ c\lambda^{2m+4l+6}(l(Q))^{-2m-4l-6}||{\bf f}||^2_{L^2(2Q)}\notag\\
&&\qquad \qquad + \ \sum_{k\geq 1}c2^{-(n+2)k}\lambda^{2m+4l+6}(2^kl(Q))^{-2m-4l-6}
||{\bf f}||^2_{L^2(2^{k+1}Q\setminus 2^kQ)}\notag\\
&& \qquad \leq \ c\biggl (||{\bf f}||^2_{L^2(2Q)}+\sum_{k\geq 1}2^{-(n+2)k}2^{-(2m+4l+6)k}
||{\bf f}||^2_{L^2(2^{k+1}Q\setminus 2^kQ)}\biggr ),
\end{eqnarray*}
by Lemma \ref{le4} $(i)$ and $(ii)$, as $l(Q)\approx\lambda$. Hence,
 \begin{eqnarray}\label{kk1}
&&||\lambda^{m+2l+3}\partial_t^{l+1}\partial_\lambda^{m+1}(\mathcal{S}_\lambda\nabla_{||}\cdot){\bf f}||_{L^2(\mathbb R^{n+1})}^2\notag\\
&& \qquad \leq \ c||{\bf f}||_2^2+c\sum_Q\sum_{k\geq 1}2^{-(n+2)k} 2^{-(2m+4l+6)k}
||{\bf f}||^2_{L^2(2^{k+1}Q\setminus 2^kQ)}.
\end{eqnarray}
To complete the proof of $(i)$ we now note that there exists, given a point $(x,t)$, at most $c_n2^{(n+2)k}$ cubes $Q$ such that $(x,t)\in 2^{k+1}Q\setminus 2^kQ$. Hence, using this, and the estimate in \eqref{kk1}, we see that
 \begin{eqnarray*}
||\lambda^{m+2l+3}\partial_t^{l+1}\partial_\lambda^{m+1}(\mathcal{S}_\lambda\nabla_{||}\cdot){\bf f}||_{L^2(\mathbb R^{n+1})}^2&\leq& c||{\bf f}||_2^2+c\sum_{k\geq 1} 2^{-(2m+4l+6)k}
||{\bf f}||^2_2\notag\\
&\leq& c||{\bf f}||_2^2,
\end{eqnarray*}
as long as $m+2l>-3$. This completes the proof of $(i)$.  Using Lemma \ref{le4} $(iii)$ and $(iv)$, the proof of $(iii)$ is similar. We omit further details.
\end{proof}

\begin{lemma}\label{appf} Let $f\in C_0^\infty(\mathbb R^{n+1},\mathbb C)$ and $\lambda_0>0$. Then $\mathcal{S}_{\lambda_0} f\in  {\mathbb H}(\mathbb R^{n+1},\mathbb C)\cap L^2(\mathbb R^{n+1},\mathbb C)$.
\end{lemma}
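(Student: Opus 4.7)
The plan is to verify $\mathcal{S}_{\lambda_0}f\in L^2(\mathbb R^{n+1},\mathbb C)\cap\mathbb H(\mathbb R^{n+1},\mathbb C)$ via (i) direct $L^2$ bounds on $\mathcal{S}_\lambda f$ and its $(x,t)$-derivatives derived from the Gaussian estimates and energy inequalities of Section \ref{sec2}, and (ii) a cutoff-plus-mollification density argument. Let $\mathrm{supp}\,f\subset Q_0$, a parabolic cube of side $l_0$ centered at the origin.

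I would first obtain the $L^2$-bound on $\mathcal{S}_\lambda f$ uniformly for $\lambda$ in a neighborhood of $\lambda_0$. Lemma \ref{le2+} with $m=l=-1$ gives the scale-invariant Gaussian estimate $|\Gamma_\lambda(x,t,y,s)|\leq c\,d_\lambda(x,t,y,s)^{-n-1}$ with $d_\lambda:=|x-y|+|t-s|^{1/2}+\lambda$. Splitting $\mathbb R^{n+1}$ into $2Q_0$ and the dyadic annuli $2^{k+1}Q_0\setminus 2^kQ_0$, and using $d_\lambda\geq\lambda$ near the support while $d_\lambda\geq c\,2^kl_0$ on the $k$-th annulus, one gets pointwise $|\mathcal{S}_\lambda f|\leq c\|f\|_\infty l_0^{n+2}\lambda^{-n-1}$ on $2Q_0$ and $|\mathcal{S}_\lambda f|\leq c\|f\|_\infty l_0\cdot 2^{-(n+1)k}$ on the $k$-th annulus. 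Squaring and integrating against the annulus volume $\leq c\,2^{(n+2)k}l_0^{n+2}$ gives a summable geometric series $\sum_k 2^{-nk}$ (convergent for $n\geq 1$), hence $\sup_{\lambda\in[\lambda_0/2,2\lambda_0]}\|\mathcal{S}_\lambda f\|_2<\infty$.

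Since $u(x,t,\lambda):=\mathcal{S}_\lambda f(x,t)$ is a weak solution of $\mathcal Hu=0$ in $\{\lambda>0\}$, covering $\mathbb R^{n+1}$ by a dyadic grid of parabolic cubes $Q$ with $l(Q)\approx\lambda_0$ and applying Lemma \ref{le1}(ii) on $Q\times I$ with $I:=(\lambda_0-l(Q),\lambda_0+l(Q))$, then summing (with bounded overlap), yields
$$\|\nabla u(\cdot,\cdot,\lambda_0)\|_2^2\leq\frac{c}{\lambda_0^{3}}\int_{\lambda_0/2}^{3\lambda_0/2}\|\mathcal{S}_\lambda f\|_2^2\,d\lambda<\infty.$$
An analogous argument using Lemma \ref{le1a} (alternatively, the pointwise bound $|\partial_t\Gamma_\lambda|\leq c\,d_\lambda^{-n-3}$ from Lemma \ref{le2+} with $m=-1$, $l=0$, integrated over $Q_0$ as in the previous paragraph) produces $\|\partial_t\mathcal{S}_{\lambda_0}f\|_2<\infty$. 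Plancherel and the trivial inequality $|\tau|\leq 1+\tau^2$ then give
$$\|H_tD_{1/2}^t\mathcal{S}_{\lambda_0}f\|_2^2=\int|\tau|\,|\widehat{\mathcal{S}_{\lambda_0}f}(\xi,\tau)|^2\,d\xi d\tau\leq\|\mathcal{S}_{\lambda_0}f\|_2^2+\|\partial_t\mathcal{S}_{\lambda_0}f\|_2^2<\infty,$$
and combined with $\|\nabla_{||}\mathcal{S}_{\lambda_0}f\|_2<\infty$, the equivalence \eqref{uau}(i) yields $\|\mathbb D\mathcal{S}_{\lambda_0}f\|_2<\infty$.

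Finally, to place $\mathcal{S}_{\lambda_0}f$ in $\mathbb H$, I take $\eta\in C_0^\infty(\mathbb R^{n+1})$ with $\eta\equiv 1$ on the unit parabolic cube, set $\eta_R(x,t):=\eta(x/R,t/R^2)$, and let $\psi_\varepsilon$ be a standard parabolic mollifier. Then $g_{R,\varepsilon}:=\psi_\varepsilon\ast(\eta_R\mathcal{S}_{\lambda_0}f)\in C_0^\infty(\mathbb R^{n+1},\mathbb C)$, and the Leibniz rule together with $|\nabla_{||}\eta_R|\leq c/R$, $|\partial_t\eta_R|\leq c/R^2$, dominated convergence, and standard mollifier properties show that $g_{R,\varepsilon}\to\mathcal{S}_{\lambda_0}f$ as $R\to\infty$ and $\varepsilon\to 0$ in the norm $\|\cdot\|_2+\|\nabla_{||}\cdot\|_2+\|\partial_t\cdot\|_2$, which by the Plancherel inequality above also forces convergence in $\|\mathbb D\cdot\|_2$. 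The principal obstacle is $L^2$-control of the tangential derivatives $\nabla_{||}\mathcal{S}_{\lambda_0}f$: since $A$ is only bounded measurable, no pointwise Moser-type estimate is available for $\partial_{x_j}\Gamma_\lambda$, and one must instead exploit the Caccioppoli-type mean-value inequality Lemma \ref{le1}(ii) applied at the natural scale $\lambda_0$, coupled to the uniform $L^2$-bound on $\mathcal{S}_\lambda f$ from Step~1.
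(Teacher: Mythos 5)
Your proposal is correct, and in two places it takes a different route from the paper. For $\|\mathcal{S}_{\lambda_0}f\|_2$ both arguments are the same (dyadic annuli around $\supp f$ plus the pointwise bound of Lemma \ref{le2+} $(i)$). For the tangential gradient, however, the paper argues by duality: it pairs $\nabla_{||}\mathcal{S}_{\lambda_0}f$ against ${\bf f}$ with $\|{\bf f}\|_2=1$, moves everything onto $(\mathcal{S}^\ast_{\lambda_0}\nabla_{||}\cdot){\bf f}$ restricted to the cube containing $\supp f$, and invokes the off-diagonal/on-diagonal bounds of Lemma \ref{le4} $(i)$--$(ii)$ with $m=l=-1$ for the adjoint; you instead cover $\mathbb R^{n+1}$ by cubes of side $\approx\lambda_0$ and apply the Caccioppoli-type estimate Lemma \ref{le1} $(ii)$ together with the uniform-in-$\lambda$ $L^2$ bound from your first step. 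Both are legitimate; your route is more self-contained (it only needs the $L^2$ bound on $\mathcal{S}_\lambda f$ and interior energy estimates), while the paper's route recycles machinery it needs anyway and avoids the covering/overlap bookkeeping. For the half-order time derivative the two arguments are morally identical — the paper uses $\|H_tD_{1/2}^t g\|_2^2\le\|\partial_t g\|_2\|g\|_2$ with $\|\partial_t\mathcal{S}_{\lambda_0}f\|_2$ controlled by Lemma \ref{le5} $(iii)$, you use $|\tau|\le 1+\tau^2$ on the Fourier side with $\|\partial_t\mathcal{S}_{\lambda_0}f\|_2$ controlled by the kernel bound of Lemma \ref{le2+} with $(m,l)=(-1,0)$. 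Finally, your closing cutoff-and-mollification step, showing that $\mathcal{S}_{\lambda_0}f$ is actually a limit of $C_0^\infty$ functions in the norm defining $\mathbb H$, addresses a point the paper leaves implicit (it only verifies finiteness of the norms); this is a welcome extra precaution rather than a divergence.
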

\begin{proof}
Given $f\in C_0^\infty(\mathbb R^{n+1},\mathbb C)$ we let $Q\subset \mathbb R^{n+1}$ be a  parabolic cube, centered at $(0,0)$, such that the support of $f$ is contained in $Q$. Let $\lambda_0>0$ be fixed. We have to prove that
$||\nabla_{||}\mathcal{S}_{\lambda_0} f||_2<\infty$,
$||H_tD_{1/2}^t\mathcal{S}_{\lambda_0} f||_2<\infty$, and that $||\mathcal{S}_{\lambda_0} f||_2<\infty$. To estimate
$||\nabla_{||}\mathcal{S}_{\lambda_0} f||_2$ we see, by duality, that it suffices to bound
 \begin{eqnarray*}
\int_{Q}|(\mathcal{S}_{\lambda_0}^\ast\nabla_{||}\cdot){\bf f}(x,t)|^2\, dxdt
&\leq&\int_{Q}|(\mathcal{S}_{\lambda_0}^\ast\nabla_{||}\cdot)({\bf f}1_{2Q})(x,t)|^2\, dxdt\notag\\
&&+\sum_{k\geq 1}\int_{Q}|(\mathcal{S}_{\lambda_0}^\ast \nabla_{||}\cdot)({\bf f}1_{2^{k+1}Q\setminus 2^kQ})(x,t)|^2\, dxdt,
\end{eqnarray*}
where ${\bf f}\in C_0^\infty(\mathbb R^{n+1},\mathbb C^n)$, $||{\bf f}||_2=1$. However, now using  the adjoint version of Lemma \ref{le4} $(i)$, $(ii)$ with $l=-1=m$, we immediately see that
 \begin{eqnarray*}
\int_{Q}|(\mathcal{S}_{\lambda_0}^\ast\nabla_{||}\cdot){\bf f}(x,t)|^2\, dxdt\leq c(n,\Lambda,\lambda_0)<\infty,
\end{eqnarray*}
 whenever ${\bf f}\in C_0^\infty(\mathbb R^{n+1},\mathbb C^n)$, $||{\bf f}||_2=1$. To estimate
 $||H_tD_{1/2}^t\mathcal{S}_{\lambda_0} f||_2$ we first note that
 $$||H_tD_{1/2}^t\mathcal{S}_{\lambda_0} f||_2^2\leq ||\partial_t\mathcal{S}_{\lambda_0} f||_2||\mathcal{S}_{\lambda_0} f||_2.$$
 Using Lemma \ref{le5} $(iii)$ we see that $||\partial_t\mathcal{S}_{\lambda_0} f||_2\leq c(n,\Lambda,\lambda_0)||f||_2<\infty$. To estimate $||\mathcal{S}_{\lambda_0} f||_2$ we write
 \begin{eqnarray*}
\int_{\mathbb R^{n+1}}|\mathcal{S}_{\lambda_0}f(x,t)|^2\, dxdt
&\leq&\int_{2Q}|\mathcal{S}_{\lambda_0} f(x,t)|^2\, dxdt\notag\\
&&+\sum_{k\geq 1}\int_{2^{k+1}Q\setminus 2^kQ}|\mathcal{S}_{\lambda_0} f(x,t)|^2\, dxdt.
\end{eqnarray*}
Using this and Lemma \ref{le2+} $(i)$ we deduce that
 \begin{eqnarray*}
\int_{\mathbb R^{n+1}}|\mathcal{S}_{\lambda_0}f(x,t)|^2\, dxdt\leq c(n,\Lambda,\lambda_0)<\infty.
\end{eqnarray*}
This completes the proof of the lemma.
\end{proof}

\section{Estimates of non-tangential maximal functions and square functions}\label{sec4}

 Consider $\mathcal{S}_\lambda=\mathcal{S}_\lambda^{\mathcal{H}}$, for $\lambda>0$, where $\mathcal{H}=\partial_t-\div A\nabla$ is assumed to satisfy \eqref{eq3}-\eqref{eq4} as well as \eqref{eq14+}-\eqref{eq14++}.  Recall the notation $|||\cdot|||$, $\Phi(f)$, introduced in \eqref{keyestint-ex+-}, \eqref{keyestint-ex+}. This section is devoted to the proof of the following two lemmas.

\begin{lemma}\label{lemsl1++}  Then there exists a constant $c$, depending at most
     on $n$, $\Lambda$, and the De Giorgi-Moser-Nash constants, such that
\begin{eqnarray*}
(i)&& ||N_\ast(\partial_\lambda \mathcal{S}_\lambda f)||_2\leq c(\sup_{\lambda>0}||\partial_\lambda \mathcal{S}_\lambda||_{2\to 2}+1)||f||_2,\notag\\
(ii)&&||\tilde N_\ast(\nabla_{||}\mathcal{S}_\lambda f)||_2\leq c \biggl (||f||_2+\sup_{\lambda>0}||\nabla_{||}\mathcal{S}_\lambda f||_2+||N_{\ast\ast}(\partial_\lambda \mathcal{S}_\lambda f)||_2\biggl ),\notag\\
(iii)&&||\tilde N_\ast(H_tD_{1/2}^t\mathcal{S}_\lambda f)||_2\leq c\biggl (||f||_2+\sup_{\lambda>0}||H_tD_{1/2}^t\mathcal{S}_\lambda f||_2\biggr)\notag\\
&&\quad\quad\quad\quad\quad\quad\quad\quad + \ c\biggl (||\tilde N_{\ast\ast}(\nabla_{||}
\mathcal{S}_\lambda f)||_2+||N_{\ast\ast}(\partial_\lambda \mathcal{S}_\lambda f)||_2\biggr ),
\end{eqnarray*}
whenever  $f\in L^2(\mathbb R^{n+1},\mathbb C)$.
\end{lemma}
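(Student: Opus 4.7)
The plan is to reduce each non-tangential maximal function to $L^2$-averages over Whitney cubes, and then to control those averages using the parabolic PDE together with the unconditional uniform estimates of Lemma~\ref{le5}. By Lemma~\ref{appf} and density it suffices to work with $f \in C_0^\infty(\mathbb R^{n+1},\mathbb C)$ throughout.

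For \emph{(i)}, set $u_\lambda := \partial_\lambda\mathcal{S}_\lambda f$; this is a weak solution of $\mathcal{H}u = 0$ in $\mathbb R_+^{n+2}$ (since $\partial_\lambda$ commutes with $\partial_t + \mathcal{L}$ under the assumption \eqref{eq4}), so by \eqref{eq14+} applied on $W_\lambda(x,t)$ we have $N_\ast(u) \leq c\tilde N_{\ast\ast}(u)$. The Gaussian bound of Lemma~\ref{le2+} ensures $u_\lambda(y,s) \to 0$ as $\lambda \to \infty$, hence the vertical identity
\begin{equation*}
|u_\lambda(y,s)|^2 = -2\int_\lambda^\infty \mathrm{Re}\bigl(\overline{u_\sigma}\,\partial_\sigma u_\sigma\bigr)(y,s)\,d\sigma
\end{equation*}
holds pointwise. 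Substituting this into the Whitney-cube average defining $\tilde N_{\ast\ast}(u)(x_0,t_0)^2$, interchanging integrations by Fubini, and invoking the parabolic Hardy--Littlewood maximal inequality, we obtain
\begin{equation*}
\|\tilde N_{\ast\ast}(u)\|_2^2 \leq c\bigl(\sup_\lambda\|u_\lambda\|_2\bigr)\bigl(\sup_\lambda\|u_\lambda\|_2 + \sup_\lambda\|\lambda\partial_\lambda u_\lambda\|_2\bigr).
\end{equation*}
Lemma~\ref{le5}(iii) with $m=1$, $l=-1$ bounds $\sup_\lambda\|\lambda\partial_\lambda u_\lambda\|_2 = \sup_\lambda\|\lambda\partial_\lambda^2\mathcal{S}_\lambda f\|_2 \leq c\|f\|_2$ unconditionally, and by hypothesis $\sup_\lambda\|u_\lambda\|_2 \leq (\sup_\lambda\|\partial_\lambda\mathcal{S}_\lambda\|_{2\to 2})\|f\|_2$; the inequality $\sqrt{AB} \leq (A+B)/2$ then yields (i).

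For \emph{(ii)} and \emph{(iii)} we use the $L^2$-averaged version $\tilde N_\ast$ directly. In (ii), $\nabla_{||}\mathcal{S}_\sigma f$ is not itself a solution of $\mathcal{H}v=0$ (since $A$ depends on the tangential variables), but $\mathcal{S}_\sigma f$ is; Caccioppoli in the form of Lemma~\ref{le1}(i)--(ii), applied to $\mathcal{S}_\sigma f - c_W$ with $c_W$ the mean of $\mathcal{S}_\sigma f$ on $W_{c\lambda}(x,t)$, converts the $|\nabla_{||}\mathcal{S}_\sigma f|^2$-averages on $W_\lambda(x,t)$ into $\lambda^{-2}|\mathcal{S}_\sigma f - c_W|^2$-averages on $W_{c\lambda}(x,t)$. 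Writing $\mathcal{S}_\sigma f(y,s) - \mathcal{S}_{\sigma_0}f(y,s) = \int_{\sigma_0}^\sigma\partial_\tau\mathcal{S}_\tau f(y,s)\,d\tau$ for $\sigma,\sigma_0 \in [\lambda/2,3\lambda/2]$, the oscillation of $\mathcal{S}_\sigma f$ in $\sigma$ is controlled pointwise by $c\lambda\,N_{\ast\ast}(\partial_\lambda\mathcal{S}_\lambda f)(x_0,t_0)$, while the oscillation in the tangential and time variables is absorbed via the uniform bound $\sup_\lambda\|\nabla_{||}\mathcal{S}_\lambda f\|_2$, parabolic Poincar\'e, and the $L^2$-boundedness of the Hardy--Littlewood maximal operator. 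For (iii), the Fourier multiplier $H_tD_{1/2}^t$ acts only in $t$ and, since $A$ is time-independent, commutes with $\partial_t + \mathcal{L}$, so $H_tD_{1/2}^t\mathcal{S}_\lambda f$ is a solution and the scheme of (ii) applies verbatim; in addition, the identity $\partial_t = D_{1/2}^tH_tD_{1/2}^t$ combined with $\partial_t\mathcal{S}_\lambda f = \mathrm{div}(A\nabla\mathcal{S}_\lambda f)$ and Plancherel expresses $H_tD_{1/2}^t\mathcal{S}_\lambda f$, up to lower-order terms, in terms of the tangential and normal derivatives already bounded in (i) and (ii). The uniform bound $\sup_\lambda\|H_tD_{1/2}^t\mathcal{S}_\lambda f\|_2$ and the estimates of Lemma~\ref{le5} with $l = 0$ (in particular $\sup_\lambda\|\lambda^2\partial_t\partial_\lambda\mathcal{S}_\lambda f\|_2 \leq c\|f\|_2$) absorb the remaining contributions.

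The principal obstacle is the vertical-to-non-tangential conversion in part (i): the integral $\int_\lambda^\infty|u_\sigma||\partial_\sigma u_\sigma|\,d\sigma$ must be turned into a clean $L^2$-bound via Fubini and Hardy--Littlewood without generating divergent $d\sigma/\sigma$-type integrals, which is precisely why the pairing of $\sup_\lambda\|u_\lambda\|_2$ with $\sup_\lambda\|\lambda\partial_\lambda u_\lambda\|_2$ --- naturally balanced by Lemma~\ref{le5}(iii) --- is essential. A secondary difficulty arises in (iii), where the non-locality of $H_tD_{1/2}^t$ in the time variable forces delicate Plancherel and commutation bookkeeping when reducing the Whitney-cube averages to the spatial estimates of (i) and (ii).
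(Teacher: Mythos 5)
Your argument for part (i) has a genuine gap at its central step. From the identity $|u_\lambda(y,s)|^2=-2\int_\lambda^\infty \mathrm{Re}\,(\overline{u_\sigma}\,\partial_\sigma u_\sigma)(y,s)\,d\sigma$, Fubini and Cauchy--Schwarz give at best
$\int_{\mathbb R^{n+1}}\sup_\lambda|u_\lambda|^2\,dyds\leq 2\int_0^\infty\|u_\sigma\|_2\,\|\sigma\partial_\sigma u_\sigma\|_2\,\frac{d\sigma}{\sigma}$,
and bounding both factors by their suprema produces the divergent integral $\int_0^\infty d\sigma/\sigma$; the pairing of $\sup_\lambda\|u_\lambda\|_2$ with $\sup_\lambda\|\lambda\partial_\lambda u_\lambda\|_2$ that you call "essential" does not remove this divergence, and the alternative Cauchy--Schwarz route requires the unweighted square function $|||\partial_\lambda\mathcal{S}_\lambda f|||$, which is not controlled by the hypotheses of the lemma (and is generically infinite). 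There is a further problem even if the pointwise vertical bound held: it would yield $\|M(g)\|_{L^1}$ with $g=\int_0^\infty|u_\sigma||\partial_\sigma u_\sigma|\,d\sigma\in L^1$, and the Hardy--Littlewood maximal operator is not bounded on $L^1$. The paper proves (i) by an entirely different mechanism: $K_{0,\lambda}$ is a parabolic Calder\'on--Zygmund kernel (Lemma \ref{le2+}), so $\partial_\lambda\mathcal{S}_\lambda f(x_0,t_0)$ for $\lambda>2\delta$ differs from a truncated singular integral at height $\delta$ by terms dominated by $M(f)(x_0,t_0)$, and the maximal truncation $\mathcal{T}_\ast^\delta$ is then controlled by a Cotlar-type inequality (via Kolmogorov's inequality and the weak $(1,1)$ bound deduced from the $L^2$ operator norm). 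This is what produces the operator norm $\sup_\lambda\|\partial_\lambda\mathcal{S}_\lambda\|_{2\to2}$ on the right-hand side. If you want to salvage a "vertical" approach you must replace the sup-times-sup pairing by a genuinely convergent square-function pairing, which is not available here.

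Part (ii) of your sketch follows the paper's strategy in outline (Caccioppoli plus Moser to reduce to oscillation of $\mathcal{S}_\sigma f$, the $\sigma$-oscillation via $N_{\ast\ast}(\partial_\lambda\mathcal{S}_\lambda f)$, the spatial oscillation via Poincar\'e), though you do not explain how the oscillation in $t$ is handled; the paper controls it by the fundamental theorem of calculus in $t$ together with the kernel bounds of Lemma \ref{le2+}, which is the source of the $\|f\|_2$ term, and this cannot be "absorbed" by $\sup_\lambda\|\nabla_{||}\mathcal{S}_\lambda f\|_2$ or Poincar\'e. Part (iii), however, has a second genuine gap: the claim that "the scheme of (ii) applies verbatim" because $H_tD_{1/2}^t\mathcal{S}_\lambda f$ is a solution misses the point that (ii) rests on Caccioppoli, which converts a \emph{gradient} into an oscillation, whereas here the quantity to be averaged is $H_tD_{1/2}^t\mathcal{S}_\sigma f$ itself; and the essential difficulty is the non-locality of the kernel $\mathrm{sgn}(s-t)|s-t|^{-3/2}$ in time. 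The paper must split this integral at $|s-t|\approx(K\sigma)^2$, estimate the local piece through $\partial_t\mathcal{S}_\sigma f$ and $M(f)$, estimate the far piece through a covering argument producing $\tilde N_{\ast\ast}(\nabla_{||}\mathcal{S}_\lambda f)$ and $N_{\ast\ast}(\partial_\lambda\mathcal{S}_\lambda f)$, and control a residual maximal truncated operator applied to $D_{1/2}^t\mathcal{S}_{\delta/4}f$ by Lemma 2.27 of \cite{HL}; your appeal to Plancherel and the identity $\partial_t=D_{1/2}^tH_tD_{1/2}^t$ yields only uniform-in-$\lambda$ $L^2$ bounds, not the non-tangential maximal function estimate that is the content of the lemma.
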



\begin{lemma}\label{lemsl1}  Assume $m\geq -1$, $l\geq -1$. Let $\Phi(f)$ be defined as in \eqref{keyestint-ex+}. Assume that $\Phi(f)<\infty$ whenever $f\in L^2(\mathbb R^{n+1},\mathbb C)$.   Then there exists a constant $c$, depending at most
     on $n$, $\Lambda$, the De Giorgi-Moser-Nash constants, and $m$, $l$, such that
         \begin{eqnarray*}
              (i)&&|||\lambda^{m+2l+4}\nabla\partial_\lambda\partial_t^{l+1}\partial_\lambda^{m+1}\mathcal{S}_{\lambda}f|||
              \leq c(\Phi(f)+||f||_2),\notag\\
     (ii)&&|||\lambda^{m+2l+4}\partial_t\partial_t^{l+1}\partial_\lambda^{m+1}\mathcal{S}_{\lambda}f|||
     		\leq c(\Phi(f)+||f||_2),
     \end{eqnarray*}
whenever $f\in L^2(\mathbb R^{n+1},\mathbb C)$.
\end{lemma}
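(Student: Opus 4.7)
The plan hinges on the fact that, since $A$ is independent of $t$ and $\lambda$, both $\partial_t$ and $\partial_\lambda$ commute with $\mathcal{H}$; consequently $w:=\partial_t^{l+1}\partial_\lambda^{m+1}\mathcal{S}_\lambda f$, along with $\partial_\lambda w$ and $\partial_t w$, is a weak solution of $\mathcal{H}u=0$ in $\mathbb R^{n+2}_+$. The strategy is to apply the Caccioppoli-type estimates of Lemma~\ref{le1} and Lemma~\ref{le1a} to these solutions in order to convert the square functions appearing in $(i)$ and $(ii)$ (which involve a spatial gradient $\nabla$ or a time derivative $\partial_t$ of a solution) into weighted square functions of lower-order derivatives of $\mathcal{S}_\sigma f$, and then to iterate down to the base quantity $|||\lambda\partial_\lambda^{2}\mathcal{S}_\lambda f|||\le\Phi(f)$.

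\textbf{Caccioppoli reduction.} Applied to the solution $\partial_\lambda w$ at each level $\lambda_0$, Lemma~\ref{le1}(ii) (summed over a covering of $\mathbb R^{n+1}$ by cubes of side $\sim\lambda_0$) yields
\[
\int_{\mathbb R^{n+1}}|\nabla\partial_\lambda w(x,t,\lambda_0)|^{2}\,dxdt\;\lesssim\;\frac{1}{\lambda_0^{3}}\int_{\lambda_0/2}^{3\lambda_0/2}\!\!\int_{\mathbb R^{n+1}}|\partial_\sigma w(x,t,\sigma)|^{2}\,dxdtd\sigma.
\]
Multiplying by $\lambda_0^{2(m+2l+4)-1}$, integrating in $\lambda_0$ and swapping the order of integration (so that $\sigma\sim\lambda_0$ on the support) gives
\[
|||\lambda^{m+2l+4}\nabla\partial_\lambda w|||^{2}\;\lesssim\;|||\sigma^{m+2l+3}\,\partial_\sigma w|||^{2}.
\]
An analogous argument with the slice form of Lemma~\ref{le1a}, applied to the solution $\partial_t w$, produces $|||\lambda^{m+2l+4}\partial_t w|||^{2}\lesssim|||\sigma^{m+2l+2}\,w|||^{2}$. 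Each subsequent application of Lemma~\ref{le1} strips off one $\partial_\sigma$ at the cost of a factor $\sigma^{-1}$ in the weight, while each application of Lemma~\ref{le1a} strips off one $\partial_t$ at the cost of $\sigma^{-2}$. Carrying out $m+1$ spatial reductions and $l+1$ time reductions, the weight and derivative budget balance at the model quantity $|||\sigma\,\partial_\sigma^{2}\mathcal{S}_\sigma f|||^{2}\leq\Phi(f)^{2}$. The remainder terms arising at each intermediate step are absorbed into $c\|f\|_2$ by the uniform sup $L^{2}$-bounds of Lemma~\ref{le5}(iii).

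\textbf{Main obstacle.} The delicate case is the corner $(m,l)=(-1,-1)$ of part~$(i)$, which reads $|||\lambda\nabla\partial_\lambda\mathcal{S}_\lambda f|||\leq c(\Phi(f)+\|f\|_2)$. The normal component is verbatim $|||\lambda\partial_\lambda^{2}\mathcal{S}_\lambda f|||=\Phi(f)$, but the tangential component $|||\lambda\nabla_{||}\partial_\lambda\mathcal{S}_\lambda f|||$ sits at the critical exponent of Hardy's inequality, so the naive Caccioppoli descent above leaves a logarithmically divergent integral. I would close this corner by using the equation itself: since $A_{n+1,n+1}$ is uniformly elliptic and $A$ is independent of $\lambda$, the identity $\mathcal{H}(\partial_\lambda\mathcal{S}_\lambda f)=0$ can be solved algebraically for $\partial_\lambda^{2}(\partial_\lambda\mathcal{S}_\lambda f)$ in terms of $\partial_t\partial_\lambda\mathcal{S}_\lambda f$ and mixed tangential--normal derivatives; a Rellich-type integration by parts then exchanges the critical tangential square function for the normal one, modulo boundary contributions controlled by $\|f\|_2$. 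Parts~$(i)$ and~$(ii)$ will therefore need to be handled together, as a coupled system, in this borderline regime.
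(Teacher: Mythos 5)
Your Caccioppoli descent is the same first step the paper takes, but the bookkeeping at the bottom of the descent is wrong, and that is where the real content of the lemma lies. The descent terminates at $|||\lambda\partial_\lambda^{2}\mathcal{S}_\lambda f|||=\Phi(f)$ only when all derivatives left at the end are normal ones, i.e.\ for part $(i)$ with $l=-1$, $m\geq 0$ and part $(ii)$ with $m\geq 1$. In every other case a time derivative survives: stripping it with Lemma \ref{le1a} costs $\sigma^{-2}$ and drops you below the critical weight (e.g.\ part $(ii)$ with $m=0$ ends at the divergent $|||\partial_\lambda \mathcal{S}_\lambda f|||$), so you must stop at $|||\lambda\partial_t\mathcal{S}_\lambda f|||$. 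Thus there are \emph{two} critical corners, $|||\lambda\nabla\partial_\lambda\mathcal{S}_\lambda f|||$ and $|||\lambda\partial_t\mathcal{S}_\lambda f|||$ (the latter is exactly part $(ii)$ with $m=l=-1$, and every instance of $(ii)$, as well as every instance of $(i)$ with $l\geq 0$, funnels into it). You identify only the first. Also, the Caccioppoli steps produce no remainder terms, so the appeal to Lemma \ref{le5}$(iii)$ there is misplaced; those uniform $L^2$ bounds are needed elsewhere (see below).

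For the critical corners your proposed mechanism -- solving $\mathcal{H}(\partial_\lambda\mathcal{S}_\lambda f)=0$ for $\partial_\lambda^2$ and invoking a Rellich-type identity to trade tangential for normal derivatives -- is both unnecessary and suspect: Rellich identities require real symmetric coefficients, while this lemma must hold for complex, non-symmetric $A$. The paper closes both corners with a purely one-dimensional device that never uses the equation on slices: write $\lambda\,d\lambda=\tfrac12 d(\lambda^2)$ and integrate by parts in $\lambda$ over $(\epsilon,1/\epsilon)$, so that for $T\in\{\nabla_{||},\partial_t\}$ one gets
\begin{eqnarray*}
\int_\epsilon^{1/\epsilon}\!\!\int_{\mathbb R^{n+1}}|T\partial_\lambda\mathcal{S}_\lambda f|^2\,\lambda\,dxdtd\lambda
\leq c\,|||\lambda^{2}T\partial_\lambda^{2}\mathcal{S}_\lambda f|||\cdot|||\lambda T\partial_\lambda\mathcal{S}_\lambda f|||+\mbox{boundary terms},
\end{eqnarray*}
absorbs the second factor, and controls the boundary terms at $\lambda=\epsilon,1/\epsilon$ by the uniform-in-$\lambda$ estimates of Lemma \ref{le5}; the resulting right-hand side $|||\lambda^{2}\nabla\partial_\lambda^{2}\mathcal{S}_\lambda f|||$ (resp., after one more such integration by parts, $|||\lambda^{3}\partial_t\partial_\lambda^{2}\mathcal{S}_\lambda f|||$) is then reduced to $\Phi(f)$ by Lemma \ref{le1--} (resp.\ Lemma \ref{le1a}). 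Without this step, or a correct substitute valid for complex $A$, your argument does not prove the lemma.
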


\subsection{Proof of Lemma \ref{lemsl1++}} Throughout the proof we  can, without loss of generality, assume that $f\in C_0^\infty(\mathbb R^{n+1},\mathbb C)$. We  let $Q\subset\mathbb R^{n+1}$ be the (smallest) cube centered at $(0,0)$ such that the support of $f$ is contained in $\frac 1 2Q$. Let $\delta>0$ be small and let $1_{\lambda>2\delta}$ denote the indicator function for the set $\{\lambda:\ {\lambda>2\delta}\}\subset\mathbb R$.

\begin{proof}[Proof of Lemma \ref{lemsl1++} $(i)$]
We let $(x_0,t_0)\in\mathbb R^{n+1}$. Recall that the kernel of $\partial_\lambda{\mathcal{S}}_\lambda$ is $K_{0,\lambda}(x,t,y,s)$ introduced in \eqref{kernel}. $K_{0,\lambda}(x,t,y,s)$ is a (parabolic) Calderon-Zygmund kernel  satisfying the  Calderon-Zygmund type estimates stated in Lemma \ref{le2+}. Given $(x_0,t_0)\in\mathbb R^{n+1}$ we consider $(x,t,\lambda)\in \Gamma(x_0,t_0)$. Then
\begin{eqnarray*}
&&|\partial_\lambda {\mathcal{S}}_{\lambda} (f)(x,t)-\partial_\lambda {\mathcal{S}}_{\lambda} (f)(x_0,t_0)|\notag\\
&& \qquad \leq \ \int_{\mathbb R^{n+1}}|K_{0,{\lambda}}(x,t,y,s)-K_{0,{\lambda}}(x_0,t_0,y,s)||f(y,s)|\, dyds\notag\\
&& \qquad \leq \ cM(f)(x_0,t_0),
\end{eqnarray*}
by Lemma \ref{le2+} and  where $M$ is the  parabolic Hardy-Littlewood maximal function. Hence
\begin{eqnarray*}
N_\ast(1_{\lambda>2\delta}\partial_\lambda {\mathcal{S}}_{\lambda}f)(x_0,t_0)\leq \sup_{\lambda>2\delta}|\partial_\lambda {\mathcal{S}}_{\lambda} (f)(x_0,t_0)|+cM(f)(x_0,t_0),
\end{eqnarray*}
and we intend to  estimate $|\partial_\lambda {\mathcal{S}}_{\lambda} (f)(x_0,t_0)|$ for $\lambda>2\delta$. To do this we fix $\lambda>2\delta$ and we decompose
$\partial_\lambda {\mathcal{S}}_{\lambda} (f)(x_0,t_0)$ as
\begin{eqnarray*}
&&\int_{||(x_0-y,t_0-s)||>5\lambda}(K_{0,{\lambda}}(x_0,t_0,y,s)-K_{0,\delta}(x_0,t_0,y,s))f(y,s)\, dyds\notag\\
&& \qquad\qquad + \ \int_{||(x_0-y,t_0-s)||\leq5\lambda}K_{0,{\lambda}}(x_0,t_0,y,s)f(y,s)\, dyds\notag\\
&& \qquad\qquad - \ \int_{\lambda<||(x_0-y,t_0-s)||<5\lambda}K_{0,\delta}(x_0,t_0,y,s)f(y,s)\, dyds\notag\\
&& \qquad\qquad + \ \int_{||(x_0-y,t_0-s)||>\lambda}K_{0,\delta}(x_0,t_0,y,s)f(y,s)\, dyds\notag\\
&&  \qquad =: \ I_1^\delta(x_0,t_0,{\lambda})+I_2^\delta(x_0,t_0,\lambda)+I_3^\delta(x_0,t_0,\lambda)+I_4^\delta(x_0,t_0,\lambda).
\end{eqnarray*}
Using Lemma \ref{le2+} we see that
\begin{eqnarray*}
|I_1^\delta(x_0,t_0,\lambda)+I_2^\delta(x_0,t_0,\lambda)+I_3^\delta(x_0,t_0,\lambda)|\leq cM(f)(x_0,t_0).
\end{eqnarray*}
Furthermore,
\begin{eqnarray*}
|I_4^\delta(x_0,t_0,\lambda)|\leq \mathcal{T}_\ast^\delta f(x_0,t_0),
\end{eqnarray*}
where
\begin{eqnarray*}
 \mathcal{T}_\ast^\delta f(x_0,t_0)=\sup_{\epsilon>2\delta} |\mathcal{T}_\epsilon^\delta f(x_0,t_0)|
\end{eqnarray*}
and
\begin{eqnarray*}
 \mathcal{T}_\epsilon^\delta f(x_0,t_0)=\int_{||(x_0-y,t_0-s)||>\epsilon}K_{0,\delta}(x_0,t_0,y,s)f(y,s)\, dyds.
\end{eqnarray*}
We have to prove that $ \mathcal{T}_\ast^\delta:L^2(\mathbb R^{n+1},\mathbb C)\to L^2(\mathbb R^{n+1},\mathbb C)$ and we have to estimate
$||\mathcal{T}_\ast^\delta||_{2\to 2}$. To do this we carry out an argument similar to the proof of Cotlar's inequality for Calderon-Zygmund operators.  With $\epsilon>0$ fixed, we let $Q_\epsilon$ be the the largest parabolic cube, centered at $(x_0,t_0)$, which satisfies
that $2Q_\epsilon\cap\{(y,s)\in\mathbb R^{n+1}:\ ||(x_0-y,t_0-s)||>\epsilon\}=\emptyset$. Then $l(Q_\epsilon)\approx\epsilon$.  Write
$f=f{1}_{2Q_\epsilon}+f{1}_{\mathbb R^{n+1}\setminus 2Q_\epsilon}$. Then
\begin{eqnarray*}
 |\mathcal{T}_\epsilon^\delta f(x_0,t_0)|&=&|\partial_\lambda\mathcal{S}_\delta (f{1}_{\mathbb R^{n+1}\setminus 2Q_\epsilon})(x_0,t_0)|\notag\\
 &\leq &cM(f)(x_0,t_0)+ |\partial_\lambda\mathcal{S}_\delta f(x,t)|+ |\partial_\lambda\mathcal{S}_\delta (f{1}_{2Q_\epsilon})(x,t)|,
\end{eqnarray*}
whenever $(x,t)\in Q_\epsilon$ and where have used Lemma \ref{le2+} once again. Let $r\in (0,1)$. Taking a $L^r$ average in the last display with respect  to $(x,t)$, we see that
\begin{eqnarray*}
 |\mathcal{T}_\epsilon^\delta f(x_0,t_0)|&\leq& cM(f)(x_0,t_0)+ (M(|\partial_\lambda\mathcal{S}_\delta f|^r)(x_0,t_0))^{1/r}\notag\\
 &&+\bigl(\mean{Q_\epsilon}|\partial_\lambda\mathcal{S}_\delta (f{1}_{2Q_\epsilon})|^r\, dxdt\bigr)^{1/r}.
\end{eqnarray*}
Hence,
\begin{eqnarray*}
 |\mathcal{T}_\epsilon^\delta f(x_0,t_0)|&\leq& cM(f)(x_0,t_0)+ (M(|\partial_\lambda\mathcal{S}_\delta f|^r)(x_0,t_0))^{1/r}+M(|\partial_\lambda\mathcal{S}_\delta f|)(x_0,t_0).
\end{eqnarray*}
Furthermore, using an equality attributed to Kolmogorov, see Lemma 10 on p. 35 in \cite{CM} for example, and that the support of $f$ is contained in $Q$, we see that
\begin{eqnarray*}
(M(|\partial_\lambda\mathcal{S}_\delta f|^r)(x_0,t_0))^{1/r}\leq c||\partial_\lambda\mathcal{S}_\delta||_{L^1(Q)\to L^{1,\infty}(5Q)}\bigr)M(f)(x_0,t_0),
\end{eqnarray*}
where $L^{1,\infty}(5Q)$ is weak-$L^1$. Using that $\partial_\lambda\mathcal{S}_\delta$ is a Calderon-Zygmund operator one can deduce, by retracing, and localizing, the proof of weak estimates in  Calderon-Zygmund theory based on $L^2$ estimates, that
$$||\partial_\lambda\mathcal{S}_\delta||_{L^1(Q)\to L^{1,\infty}(5Q)}\leq c\bigl(1+||\partial_\lambda\mathcal{S}_\delta||_{L^2(Q)\to L^{2}(\mathbb R^{n+1})}\bigr),$$
where $c$ depends on the kernel $K_{0,\lambda}$ through the constants appearing in Lemma \ref{le2+}. For a detailed account of the dependence of the constant c, see \cite{NTV}. Hence
\begin{eqnarray*}
\mathcal{T}_\ast^\delta f(x_0,t_0)\leq c\bigl (1+||\partial_\lambda\mathcal{S}_\delta||_{L^2(Q)\to L^{2}(\mathbb R^{n+1})}\bigr)M(f)(x_0,t_0)+M(|\partial_\lambda\mathcal{S}_\delta f|)(x_0,t_0)
\end{eqnarray*}
and retracing the estimates we can conclude that we have proved that
\begin{eqnarray*}
N_\ast(1_{\lambda>2\delta}\partial_\lambda {\mathcal{S}}_{\lambda}f)(x_0,t_0)\leq c\bigl (1+||\partial_\lambda\mathcal{S}_\delta||_{2\to 2}\bigr)M(f)(x_0,t_0)+M(|\partial_\lambda\mathcal{S}_\delta f|)(x_0,t_0)
\end{eqnarray*}
whenever $(x_0,t_0)\in\mathbb R^{n+1}$ and $\delta>0$. Hence,
\begin{eqnarray*}
||N_\ast(1_{\lambda>2\delta}\partial_\lambda {\mathcal{S}}_{\lambda}f)||_2\leq c\bigl (1+\sup_{\lambda>0}||\partial_\lambda\mathcal{S}_\lambda||_{2\to 2}\bigr)||f||_2,
\end{eqnarray*}
whenever $f\in  C_0^\infty(\mathbb R^{n+1},\mathbb C)$ and for a constant $c$, depending at most
     on $n$, $\Lambda$, and the De Giorgi-Moser-Nash constants, in particular $c$ is independent of $\delta$. Letting $\delta\to 0$  completes the proof of Lemma \ref{lemsl1++}  $(i)$.
\end{proof}

\begin{proof}[Proof of Lemma \ref{lemsl1++} $(ii)$]

We let $(x_0,t_0)\in\mathbb R^{n+1}$.  To estimate $\tilde N_\ast(1_{\lambda>2\delta}\nabla_{||}{\mathcal{S}}_\lambda f)(x_0,t_0)$ it suffices to bound
\begin{eqnarray*}
\biggl (\mean{W_\lambda(x_0,t_0)}|\nabla_{||}{\mathcal{S}}_\sigma f(y,s)|^2\, dydsd\sigma\biggr)^{1/2},\
\end{eqnarray*}
 where $$W_\lambda(x,t):=\{(y,s,\sigma):\ (y,s)\in Q_\lambda(x,t),\lambda/2<\sigma<3\lambda/2\}$$ and for $\lambda>4\delta/3$ which we from now on assume.  In the following
we let, for $m\in\{0,1,...,4\}$
$$2^m W_{\lambda}(x,t):=\{(y,s,\sigma):\ (y,s)\in Q_{2^m\lambda}(x,t),\lambda/2-m\lambda 2^{-10}<\sigma<3\lambda/2+m\lambda 2^{-10}\}.$$
Then $2^0W_\lambda(x,t)=W_\lambda(x,t)$. Using this notation and energy estimates, Lemma \ref{le1--}, we see that
\begin{eqnarray*}
\mean{W_\lambda(x_0,t_0)}|\nabla_{||}{\mathcal{S}}_\sigma f(y,s)|^2\, dydsd\sigma\leq \frac c{\lambda^2}\mean{2W_{\lambda}(x_0,t_0)}|{\mathcal{S}}_\sigma f(y,s)-A|^2\, dydsd\sigma,
\end{eqnarray*}
where $A$ is a constant which in the following is a degree of freedom. Furthermore, using \eqref{eq14+} with $p=1$  we see that
\begin{eqnarray*}
&&\biggl (\mean{W_{\lambda}(x_0,t_0)}|\nabla_{||}{\mathcal{S}}_\sigma f(y,s)|^2\, dydsd\sigma\biggr)^{1/2}
\ \leq \
\frac c{\lambda}\mean{2^2W_{\lambda}(x_0,t_0)}|{\mathcal{S}}_\sigma f(y,s)-A|\, dydsd\sigma.
\end{eqnarray*}
We write
\begin{eqnarray*}
 &&\frac 1{\lambda}\mean{2^2W_{\lambda}(x_0,t_0)}|{\mathcal{S}}_\sigma f(y,s)-A|\, dydsd\sigma\notag\\
 && \qquad \leq \ \frac 1{\lambda}\mean{2^2W_{\lambda}(x_0,t_0)}|{\mathcal{S}}_\sigma f(y,s)-
 {\mathcal{S}}_\sigma f(y,t_0)|\, dydsd\sigma\notag\\
 && \qquad \qquad + \ \frac 1{\lambda}\mean{2^2W_{\lambda}(x_0,t_0)}|{\mathcal{S}}_\sigma f(y,t_0)-A|\, dydsd\sigma\notag\\
 && \qquad =: \ I_1+I_2.
\end{eqnarray*}
By the fundamental theorem of calculus we have
\begin{eqnarray*}
 I_1\leq \mean{2^3W_{\lambda}(x_0,t_0)}|\lambda \partial_t {\mathcal{S}}_\sigma f(y,s)|\, dydsd\sigma.
\end{eqnarray*}
Let $Q\subset\mathbb R^{n+1}$ be a parabolic cube centered at $(x_0,t_0)$ and with side length $8\lambda$. Then $I_1$ is bounded by
\begin{eqnarray*}
&&c\int_{\lambda/8}^{2\lambda}\int_{Q}|\lambda^{-n-2}\partial_t {\mathcal{S}}_\sigma (f{\bf 1}_{2Q})(y,s)|\, dydsd\sigma\notag\\
 && \qquad + \ c\int_{\lambda/8}^{2\lambda}\int_{Q}|\lambda^{-n-2}\bigl (\partial_t {\mathcal{S}}_\sigma (f{\bf 1}_{\mathbb R^{n+1}\setminus 2Q})(y,s)-\partial_t {\mathcal{S}}_\sigma (f{\bf 1}_{\mathbb R^{n+1}\setminus 2Q})(x_0,t_0)\bigr)|\, dydsd\sigma\notag\\
 &&\qquad + \ c\int_{\lambda/8}^{2\lambda}|\partial_t {\mathcal{S}}_\sigma (f{\bf 1}_{\mathbb R^{n+1}\setminus 2Q})(x_0,t_0)|\, d\sigma\notag\\
 && \quad =: \ I_{11}+I_{12}+I_{13}.
\end{eqnarray*}
Using Lemma \ref{le2+} we see that
\begin{eqnarray*}
 I_{11}+I_{12}\leq c M(f)(x_0,t_0),
\end{eqnarray*}
where $M$ is the  parabolic Hardy-Littlewood maximal function. Furthermore,
\begin{eqnarray*}
I_{13}&\leq &c\sum_{k=1}^\infty \int_{\lambda/8}^{2\lambda}|\partial_t {\mathcal{S}}_\sigma (f{\bf 1}_{2^{k+1}Q\setminus 2^kQ})(x_0,t_0)|\, d\sigma\notag\\
&\leq &c\lambda\sum_{k=1}^\infty (2^k\lambda)^{-n-3}\int_{2^{k+1}Q}|f(y,s)|\,  dyds\leq c M(f)(x_0,t_0).
\end{eqnarray*}
Hence, we can conclude that
\begin{eqnarray}\label{ss1}
I_1\leq c M(f)(x_0,t_0).
\end{eqnarray}
Focusing on $I_2$ we see that
\begin{eqnarray*}
I_2&\leq &\frac 1{\lambda}\mean{2^2W_{\lambda}(x_0,t_0)}|{\mathcal{S}}_\sigma f(y,t_0)-{\mathcal{S}}_{\delta/4} f(y,t_0)|\, dydsd\sigma\notag\\
&&+\frac 1{\lambda}\mean{2^2W_{\lambda}(x_0,t_0)}|{\mathcal{S}}_{\delta/4}f(y,t_0)-A|\, dydsd\sigma\notag\\
&=:&I_{21}+I_{22}.
\end{eqnarray*}
By the fundamental theorem of calculus
\begin{eqnarray*}
I_{21}&\leq &\frac 1{\lambda}\mean{2^3W_{\lambda}(x_0,t_0)}\lambda|N_{\ast\ast}^x(\partial_\lambda {\mathcal{S}}_\lambda f(\cdot,t_0))(y)|\, dydsd\sigma\notag\\
&\leq & M^x(N_{\ast\ast}^x(\partial_\lambda {\mathcal{S}}_\lambda f(\cdot,t_0))(\cdot))(x_0),
\end{eqnarray*}
where $M^x$ is the Hardy-Littlewood maximal function in $x$ only and $N_{\ast\ast}^x$ is an elliptic non tangential maximal function on a fixed time slice. Finally, let
$A$ be the average of ${\mathcal{S}}_{\delta/4}f(y,t_0)$, with respect to $y$, on an spatial surface cube around $x_0$ with sidelength $\lambda$. Then, using the  $L^1$-Poincare inequality we deduce that
\begin{eqnarray*}
I_{22}&\leq &c M^x(\nabla_{||}{\mathcal{S}}_{\delta/4}f(\cdot,t_0))(x_0).
\end{eqnarray*}
Retracing the argument we can conclude that
\begin{eqnarray*}
\tilde N_\ast(1_{\lambda>2\delta}\nabla_{||}{\mathcal{S}}_\lambda f)(x_0,t_0)&\leq &c \bigl (M(f)(x_0,t_0)+M^x(N_{\ast\ast}^x(\partial_\lambda {\mathcal{S}}_\lambda f(\cdot,t_0))(\cdot))(x_0)\notag\\
&&+
M^x(\nabla_{||}{\mathcal{S}}_{\delta/4}f(\cdot,t_0))(x_0)\bigr ).
\end{eqnarray*}
Hence
\begin{eqnarray*}
||\tilde N_\ast(1_{\lambda>2\delta}\nabla_{||}{\mathcal{S}}_\lambda f)||_2^2&\leq &c \bigl (||f||_2^2+||\nabla_{||}{\mathcal{S}}_{\delta/4}f||_2^2\bigr )\notag\\
&&+\int_{-\infty}^\infty
\int_{\mathbb R^{n}}|N_{\ast\ast}^x(\partial_\lambda {\mathcal{S}}_\lambda f(\cdot,t))(x)|^2\, dxdt.
\end{eqnarray*}
However,
$$N_{\ast\ast}^x(\partial_\lambda {\mathcal{S}}_\lambda f(\cdot,t_0))(x_0)\leq N_{\ast\ast}(\partial_\lambda {\mathcal{S}}_\lambda f)(x_0,t_0)$$
and we can conclude that
\begin{eqnarray*}
||\tilde N_\ast(1_{\lambda>2\delta}\nabla_{||}{\mathcal{S}}_\lambda f)||_2\leq c \bigl (||f||_2+\sup_{\lambda>0}||\nabla_{||}{\mathcal{S}}_\lambda f||_2+||N_{\ast\ast}(\partial_\lambda {\mathcal{S}}_\lambda f)||_2\bigr ).
\end{eqnarray*}
This completes the proof of Lemma \ref{lemsl1++} $(ii)$.
\end{proof}

\begin{proof}[Proof of Lemma \ref{lemsl1++} $(iii)$]
We again fix $(x_0,t_0)\in\mathbb R^{n+1}$ and we note that to estimate $$\tilde N_\ast(1_{\lambda>2\delta}H_tD_{1/2}^t{\mathcal{S}}_\lambda f)(x_0,t_0)$$ it suffices to bound
\begin{eqnarray*}
\biggl (\mean{W_\lambda(x_0,t_0)}|H_tD_{1/2}^t{\mathcal{S}}_\sigma f(y,s)|^2\, dydsd\sigma\biggr)^{1/2},  \ \lambda>4\delta/3.
\end{eqnarray*}
Consider $(y,s,\sigma)\in W_\lambda(x_0,t_0)$, $\lambda>4\delta/3$, and let $K\gg 1$ be a degree of freedom to be chosen. Then
\begin{eqnarray*}
H_tD_{1/2}^t({\mathcal{S}}_\sigma f)(y,s)&=&\lim_{\epsilon\to 0}\int_{\epsilon\leq |s-t|<1/\epsilon}\frac {\mbox{sgn}(s-t)}{|s-t|^{3/2}}({\mathcal{S}}_\sigma f)(y,t)\, dt\notag\\
&=&\lim_{\epsilon\to 0}\int_{\epsilon\leq |s-t|<(K\sigma)^2}\frac {\mbox{sgn}(s-t)}{|s-t|^{3/2}}({\mathcal{S}}_\sigma f)(y,t)\, dt\notag\\
&&+\lim_{\epsilon\to 0}\int_{(K\sigma)^2\leq |s-t|<1/\epsilon}\frac {\mbox{sgn}(s-t)}{|s-t|^{3/2}}({\mathcal{S}}_\sigma f)(y,t)\, dt\notag\\
&=:&g_1(y,s,\sigma)+g_2(y,s,\sigma).
\end{eqnarray*}
Let
$$g_3(x_0,t_0,\sigma):=\sup_{\{y:\ |y-x_0|\leq 4\sigma\}}\sup_{\{\tau:\ |\tau-t_0|\leq (4K\sigma)^2\}}|\partial_\tau({\mathcal{S}}_\sigma f)(y,\tau)|.$$
Then, using the oddness about $s$ of the kernel in the definition of $g_1$,
\begin{eqnarray*}
|g_1(y,s,\sigma)|\leq cK\lambda g_3(x_0,t_0,\sigma),
\end{eqnarray*}
whenever $(y,s,\sigma)\in W_\lambda(x_0,t_0)$.
Hence,
\begin{eqnarray*}
\biggl (\mean{W_\lambda(x_0,t_0)}|g_1(y,s,\sigma)|^2\, dydsd\sigma\biggr)
\leq c\lambda^2\int_{\lambda/8}^{2\lambda}| g_3(x_0,t_0,\sigma)|^2\, d\sigma.
\end{eqnarray*}
To estimate the right hand side in the last display, let $(y,\tau)$ be such that $ |y-x_0|\leq 4\sigma$, $ |\tau-t_0|\leq (4K\sigma)^2$. Let $Q\subset\mathbb R^{n+1}$ be a parabolic cube centered at $(x_0,t_0)$ and with side length $16K\sigma$. Then, for $K$ large enough we see that
\begin{eqnarray*}
|\lambda\partial_\tau({\mathcal{S}}_\sigma f)(y,\tau)|&\leq&\lambda|\partial_\tau{\mathcal{S}}_\sigma (f1_{2Q})(y,\tau)|\notag\\
&&+\lambda|\partial_\tau{\mathcal{S}}_\sigma( f1_{\mathbb R^{n+1}\setminus 2Q})(y,\tau)-\partial_\tau{\mathcal{S}}_\sigma (f1_{\mathbb R^{n+1}\setminus 2Q})(x_0,t_0)|\notag\\
&&+\lambda|\partial_\tau{\mathcal{S}}_\sigma (f1_{\mathbb R^{n+1}\setminus 2Q})(x_0,t_0)|.
\end{eqnarray*}
Basically repeating the proof of \eqref{ss1} we see that
\begin{eqnarray*}
\biggl (\mean{W_\lambda(x_0,t_0)}|g_1(y,s,\sigma)|^2\, dydsd\sigma\biggr)^{1/2}\leq c M(f)(x_0,t_0).
\end{eqnarray*}
To estimate $g_2(y,s,\sigma)$, whenever $(y,s,\sigma)\in W_\lambda(x_0,t_0)$, we introduce the function
\begin{eqnarray*}
g_4(\bar y,\bar s,\sigma):=\lim_{\epsilon\to 0}\int_{(K\sigma)^2\leq |t-\bar s|<1/\epsilon}\frac {\mbox{sgn}
(\bar s-t)}{|\bar s-t|^{3/2}}({\mathcal{S}}_{\delta/4} f)(\bar y,t)\, dt.
\end{eqnarray*}
Now
\begin{eqnarray*}
|g_2(y,s,\sigma)-g_4(x_0,t_0,\sigma)|&\leq & |g_2(y,s,\sigma)-g_2(x_0,s,\sigma)|\notag\\
&&+|g_2(x_0,s,\sigma)-g_2(x_0,t_0,\sigma)|\notag\\
&&+|g_2(x_0,t_0,\sigma)-g_4(x_0,t_0,\sigma)|.
\end{eqnarray*}
In particular,
\begin{eqnarray*}
|g_2(y,s,\sigma)-g_4(x_0,t_0,\sigma)|&\leq & \int_{(K\sigma)^2\leq |s-t|}\frac {|{\mathcal{S}}_\sigma f(y,t)-{\mathcal{S}}_\sigma f(x_0,t)|}{|t-s|^{3/2}}\, dt\notag\\
&&+\int_{(K\sigma)^2\leq |\xi|}\frac {|{\mathcal{S}}_\sigma f(x_0,\xi+s)-{\mathcal{S}}_\sigma f(x_0,\xi+t_0)|}{|\xi|^{3/2}}\, d\xi\notag\\
&&+\int_{(K\sigma)^2\leq |t-t_0|}\frac {|{\mathcal{S}}_\sigma f(x_0,t)-{\mathcal{S}}_{\delta/4} f(x_0,t)|}{|t_0-t|^{3/2}}\, dt\notag\\
&=:&h_1(y,s,\sigma)+h_2(y,s,\sigma)+h_3(x_0,t_0,\sigma).
\end{eqnarray*}
We note that
\begin{eqnarray*}
h_2(y,s,\sigma)&\leq&c\sigma^2\int_{(K\sigma)^2\leq |\xi|}\frac {N_\ast(\partial_t{\mathcal{S}}_\sigma f)(x_0,\xi+t_0)}{|\xi|^{3/2}}\, d\xi\notag\\
&\leq&c\sigma\int_{(K\sigma)^2\leq |\xi|}\frac {M(f)(x_0,\xi+t_0)}{|\xi|^{3/2}}\, d\xi\leq c
M^t(M(f)(x_0,\cdot))(t_0),
\end{eqnarray*}
where $M^t$ is the Hardy-Littlewood maximal operator in the $t$-variable, as we see by arguing as above. Similarly,
\begin{eqnarray*}
h_3(y,s,\sigma)\leq c M^t(N_\ast(\partial_\lambda {\mathcal{S}}_\sigma f)(x_0,\cdot))(t_0).
\end{eqnarray*}
We therefore focus on $h_1(y,s,\sigma)$. Let
$$\tilde h_1(y,\sigma):=\int_{\lambda^2\leq |t-t_0|}\frac {|{\mathcal{S}}_\sigma f(y,t)-{\mathcal{S}}_\sigma f(x_0,t)|}{|t-t_0|^{3/2}}\, dt.$$
If  $K$ is large enough, then $h_1(y,s,\sigma)\leq c\tilde h_1(y,\sigma)$, whenever $(y,s,\sigma)\in W_\lambda(x_0,t_0)$.
Hence we only have to estimate
$$\biggl (\mean{\hat Q_\lambda(x_0)\times I_{\lambda/2}(\lambda)}\tilde h_1^2\, dyd\sigma\biggr )^{1/2}=\sup\biggl | \mean{\hat Q_\lambda(x_0)\times I_{\lambda/2}(\lambda)}
\tilde h_1g\, dyd\sigma\biggl |,$$
where $\hat Q_\lambda(x_0)\subset \mathbb R^n$ now is a (non-parabolic) cube with side length $\lambda$ and center $x_0$, $I_{\lambda/2}(\lambda)$ is the interval $(\lambda/2,3\lambda/2)$,   and where the sup is taken with respect to all $g\in C_0^\infty(\mathbb R^{n+1},\mathbb R)$ such that
\begin{eqnarray}\label{g}\biggl (\mean{\hat Q_\lambda(x_0)\times I_{\lambda/2}(\lambda)}g^2\, dyd\sigma\biggr )^{1/2}=1.
\end{eqnarray}
Given $g$ as in \eqref{g} we let
$$E:=\mean{\hat Q_\lambda(x_0)\times I_{\lambda/2}(\lambda)}
\tilde h_1g\, dyd\sigma.$$
Then
\begin{eqnarray*}
E&=&\mean{\hat Q_\lambda(x_0)\times I_{\lambda/2}(\lambda)}\biggl (\int_{\lambda^2\leq |t-t_0|}\frac {|{\mathcal{S}}_\sigma f(y,t)-
{\mathcal{S}}_\sigma f(x_0,t)|}{|t-t_0|^{3/2}}\, dt\biggr )g(y,\sigma)\, dyd\sigma\notag\\
&\leq &c\sum_{j=0}^\infty(\lambda^22^j)^{-3/2}\mean{\hat Q_\lambda(x_0)\times I_{\lambda/2}(\lambda)}\biggl (\int_{I_j}{|{\mathcal{S}}_\sigma f(y,t)-
{\mathcal{S}}_\sigma f(x_0,t)|}\, dt\biggr )g(y,\sigma)\, dyd\sigma,
\end{eqnarray*}
where $I_j=\{t:\ \lambda^22^j\leq |t-t_0|<\lambda^22^{j+1}\}$. Let $\eta\in (-\lambda^2/100,\lambda^2/100)$ be a degree of freedom. Given any integer $i\in\{2^{j-1},....,2^{j+3}\}$ we let $t_{j,i}^\pm=t_0\pm i\lambda^2$, $N_j=(2^{j+3}-2^{j-1}+1)$. Given $\eta$ we let
$I_{j,i}(t_{j,i}^\pm+\eta,\lambda^2)$ be the interval centered at $t_{j,i}^\pm+\eta$ and of length $2\lambda^2$. Then $\{I_{j,i}(t_{j,i}^\pm+\eta,\lambda^2)\}_i$ is, for each $\eta\in (-\lambda^2/100,\lambda^2/100)$, a covering of
$I_j$ and $\{I_{j,i}(t_{j,i}+\eta,\lambda^2/10^4)\}$ is a disjoint collection. Using this we see that $|E|$ can be bounded from above by
\begin{eqnarray*}
&&c\lambda^2\sum_{j=0}^\infty(\lambda^22^j)^{-3/2}\sum_{i=1}^{N_j}\mean{W_\lambda(x_0,t_{j,i}^\pm+\eta)}|{\mathcal{S}}_\sigma f(y,t)-
{\mathcal{S}}_\sigma f(x_0,t)||g(y,\sigma)|\, dydtd\sigma\notag\\
&& \qquad \leq \  c\lambda^3\sum_{j=0}^\infty(\lambda^22^j)^{-3/2}\sum_{i=1}^{N_j}\tilde N_{\ast\ast}(\nabla_{||}{\mathcal{S}}_\lambda f)(x_0,t_{j,i}^\pm+\eta).
\end{eqnarray*}
This estimate holds uniformly with respect to $\eta\in (-\lambda^2/100,\lambda^2/100)$. In particular, taking the average with respect to $\eta$ we see that
\begin{eqnarray*}
|E|&\leq& c\lambda\sum_{j=0}^\infty(\lambda^22^j)^{-3/2}\int_{\{t:\ \lambda^22^{j-2}\leq |t-t_0|<\lambda^22^{j+4}\}}\tilde N_{\ast\ast}(\nabla_{||}{\mathcal{S}}_\lambda f)(x_0,t)\, dt\notag\\
&\leq& cM^t(\tilde N_{\ast\ast}(\nabla_{||}{\mathcal{S}}_\lambda f)(x_0,\cdot))(t_0).
\end{eqnarray*}
Putting the estimates together we can conclude, for $\lambda>4\delta/3$, that
\begin{eqnarray*}
\biggl (\mean{W_\lambda(x_0,t_0)}|H_tD_{1/2}^t{\mathcal{S}}_\sigma f(y,s)|^2\, dydsd\sigma\biggr)^{1/2}
\end{eqnarray*}
is bounded by
\begin{eqnarray*}
 && cM^t(\tilde N_{\ast\ast}(\nabla_{||}{\mathcal{S}}_\lambda f)(x_0,\cdot))(t_0)+c M^t(M(f)(x_0,\cdot))(t_0)+cM^t(N_\ast(\partial_\lambda {\mathcal{S}}_\sigma f)(x_0,\cdot))(t_0)\notag\\
&&+\biggl (\mean{W_\lambda(x_0,t_0)}|g_4(x_0,t_0,\sigma)|^2\, dydsd\sigma\biggr)^{1/2},
\end{eqnarray*}
 where $M^t$ is the Hardy-Littlewood maximal operator in the $t$-variable and $M$ is the  parabolic Hardy-Littlewood maximal function. Hence, letting $$\psi(x_0,t_0):=\sup_{\sigma>0}|g_4(x_0,t_0,\sigma)|$$
we see that
\begin{eqnarray*}
||\tilde N_\ast(1_{\lambda>2\delta}H_tD_{1/2}^t{\mathcal{S}}_\lambda f)||_2&\leq& c||f||_2\notag\\
&&+ c\bigl (||\tilde N_{\ast\ast}(\nabla_{||}
\mathcal{S}_\lambda f)||_2+||N_{\ast\ast}(\partial_\lambda \mathcal{S}_\lambda f)||_2\bigr )\notag\\
 &&+c||\psi||_2
\end{eqnarray*}
where the constant $c$ is independent of $\delta$. Hence, to complete the proof of $(iii)$ it remains to estimate $||\psi||_2$. To do this we first recall that $f\in C_0^\infty(\mathbb R^{n+1},\mathbb C)$. Hence, using  Lemma \ref{appf} we know that
$\mathcal{S}_{\delta/4} f\in  {\mathbb H}(\mathbb R^{n+1},\mathbb C)\cap L^2(\mathbb R^{n+1},\mathbb C)$. Using this it follows that
$${\mathcal{S}}_{\delta/4}f(x,t)=cI_{1/2}^t(D_{1/2}^t{\mathcal{S}}_{\delta/4}f)(x,t)=c I_{1/2}^th(x,t), $$
where $ I_{1/2}^t$ is the (fractional)  Riesz operator in $t$ defined on the Fourier transform side through the multiplier $|\tau|^{-1/2}$ and
$h(x,t):=(D_{1/2}^t{\mathcal{S}}_{\delta/4}f)(x,t)$. Using this we see that
$$\psi(x_0,t_0)=c\sup_{\epsilon>0}|\tilde V_\epsilon h(x_0,t_0)|=:c\tilde V_\ast h(x_0,t_0),$$
where $V_\epsilon$ is defined on functions $k\in L^2(\mathbb R,\mathbb R)$ by
$$V_\epsilon k(t)=\int_{\{|s-t|>\epsilon\}}\frac{\mbox{sgn}(t-s) I_{1/2}^tk(s)}{|s-t|^{3/2}}\, ds,$$
and $\tilde V_\epsilon h(x,t)=V_\epsilon h(x,\cdot)$ evaluated at $t$. However, using this notation we can apply Lemma 2.27 in \cite{HL} and conclude that
$$||\psi||_2\leq c||h||_2=c||D_{1/2}^t{\mathcal{S}}_{\delta/4}f||_2\leq c\sup_{\lambda>0}||H_tD_{1/2}^t\mathcal{S}_\lambda f||_2.$$
This completes the proof of Lemma \ref{lemsl1++} $(iii)$.
\end{proof}

\subsection{Proof of Lemma \ref{lemsl1}} We first note, using Lemma \ref{le1--}, Lemma \ref{le1a} and induction, that it suffices to prove
\begin{eqnarray*}
              (i')&&|||\lambda \nabla\partial_\lambda\mathcal{S}_{\lambda}f|||\leq c\Phi(f)+c||f||_2,\notag\\
     (ii')&&|||\lambda\partial_t\mathcal{S}_{\lambda}f|||\leq c\Phi(f)+c||f||_2,
     \end{eqnarray*}
whenever $f\in L^2(\mathbb R^{n+1},\mathbb C)$. To prove $(i')$ it  suffices to estimate $|||\lambda \nabla_{||}\partial_\lambda \mathcal{S}_{\lambda}f|||$. Given $\epsilon>0$ we let
\begin{eqnarray*}
A_1&:=&-\frac 12\int_{\epsilon}^{1/\epsilon}\int_{\mathbb R^{n+1}}\nabla_{||}\partial_\lambda^2 \mathcal{S}_{\lambda}f\cdot
\overline{\nabla_{||}\partial_\lambda \mathcal{S}_{\lambda}f}\,  \lambda^2{dxdtd\lambda},\notag\\
A_2&:=&-\frac 12\int_{\epsilon}^{1/\epsilon}\int_{\mathbb R^{n+1}}\nabla_{||}\partial_\lambda \mathcal{S}_{\lambda}f\cdot
\overline{\nabla_{||}\partial_\lambda^2 \mathcal{S}_{\lambda}f}\,  \lambda^2{dxdtd\lambda},\notag\\
A_3&:=&\int_{\mathbb R^{n+1}}\nabla_{||}\partial_\lambda \mathcal{S}_{\lambda}f\cdot
\overline{\nabla_{||}\partial_\lambda \mathcal{S}_{\lambda}f}\,  \lambda^2{dxdt}\biggl |_{\lambda=1/\epsilon},\notag\\
A_4&:=&\int_{\mathbb R^{n+1}}\nabla_{||}\partial_\lambda \mathcal{S}_{\lambda}f\cdot
\overline{\nabla_{||}\partial_\lambda \mathcal{S}_{\lambda}f}\,  \lambda^2{dxdt}\biggl |_{\lambda=\epsilon}.
\end{eqnarray*}
Using partial integration with respect to $\lambda$,
\begin{eqnarray*}
\int_{\epsilon}^{1/\epsilon}\int_{\mathbb R^{n+1}}\nabla_{||}\partial_\lambda \mathcal{S}_{\lambda}f\cdot
\overline{\nabla_{||}\partial_\lambda \mathcal{S}_{\lambda}f}\,  \lambda{dxdtd\lambda}=A_1+A_2+A_3+A_4.
\end{eqnarray*}
Furthermore, using Lemma \ref{le5} $(ii)$,
\begin{eqnarray*}
|A_1|+|A_2|+|A_3|+|A_4|\leq c|||\lambda^2 \nabla_{||}\partial_\lambda^2 \mathcal{S}_{\lambda}f|||^2+c||f||_2^2,
\end{eqnarray*}
with $c$ independent of $\epsilon$. Hence
\begin{eqnarray}\label{eq4.43}
|||\lambda \nabla_{||}\partial_\lambda \mathcal{S}_{\lambda}f|||^2&=&\lim_{\epsilon\to 0}\int_{\epsilon}^{1/\epsilon}\int_{\mathbb R^{n+1}}\nabla_{||}\partial_\lambda \mathcal{S}_{\lambda}f\cdot
\overline{\nabla_{||}\partial_\lambda \mathcal{S}_{\lambda}f}\,  \lambda{dxdtd\lambda}\notag\\
&\leq& c|||\lambda^2 \nabla_{||}\partial_\lambda^2 \mathcal{S}_{\lambda}f|||^2+c||f||_2^2.
\end{eqnarray}
$(i')$ now follows from an application of  Lemma \ref{le1--}. To prove $(ii')$ we first introduce, for $\epsilon>0$,
\begin{eqnarray*}\label{ua1-}
B_1&:=&-\frac 1 2\int_{\epsilon}^{1/\epsilon}\int_{\mathbb R^{n+1}}\partial_t\partial_\lambda \mathcal{S}_{\lambda}f
\overline{\partial_t \mathcal{S}_{\lambda}f}\,  \lambda^2{dxdtd\lambda},\notag\\
B_2&:=&-\frac 1 2\int_{\epsilon}^{1/\epsilon}\int_{\mathbb R^{n+1}}\partial_t\mathcal{S}_{\lambda}f
\overline{\partial_t \partial_\lambda \mathcal{S}_{\lambda}f}\,  \lambda^2{dxdtd\lambda},\notag\\
B_3&:=&\int_{\mathbb R^{n+1}}\partial_t \mathcal{S}_{\lambda}f
\overline{\partial_t \mathcal{S}_{\lambda}f}\,  \lambda^2{dxdt}\biggl |_{\lambda=1/\epsilon},\notag\\
B_4&:=&-\int_{\mathbb R^{n+1}}\partial_t \mathcal{S}_{\lambda}f
\overline{\partial_t \mathcal{S}_{\lambda}f}\,  \lambda^2{dxdt}\biggl |_{\lambda=\epsilon}.
\end{eqnarray*}
Then, using Lemma \ref{le5} $(iii)$
\begin{eqnarray*}\label{ua2-}
|B_1|+|B_2|+|B_3|+|B_4|\leq c|||\lambda^2\partial_t\partial_\lambda\mathcal{S}_{\lambda}f|||^2+c||f||_2^2,
\end{eqnarray*}
with $c$ independent of $\epsilon$. Hence, again by  integration by parts with respect to $\lambda$,
\begin{eqnarray}\label{ua2}
|||\lambda\partial_t\mathcal{S}_{\lambda}f|||^2 &=&\lim_{\epsilon\to 0}\int_{\epsilon}^{1/\epsilon}\int_{\mathbb R^{n+1}}\partial_t \mathcal{S}_{\lambda}f
\overline{\partial_t \mathcal{S}_{\lambda}f}\,  \lambda{dxdtd\lambda}\notag\\
&\leq& c|||\lambda^2\partial_t\partial_\lambda\mathcal{S}_{\lambda}f|||^2+c||f||_2^2.
\end{eqnarray}
Furthermore, repeating the above argument  it also follows that
\begin{eqnarray*}
|||\lambda^2\partial_t\partial_\lambda\mathcal{S}_{\lambda}f|||^2\leq c|||\lambda^3\partial_t\partial_\lambda^2\mathcal{S}_{\lambda}f|||^2+c||f||_2^2.
\end{eqnarray*}
Finally, using Lemma \ref{le1a} we can combine the above estimates and conclude that
\begin{eqnarray*}
|||\lambda\partial_t\mathcal{S}_{\lambda}f|||\leq c\Phi(f)+c||f||_2.
\end{eqnarray*}
This completes the proof of  $(ii')$ and hence the proof of Lemma \ref{lemsl1}.

\section{Resolvents, square functions and Carleson measures}\label{sec5}  In the following we collect some of the main results from \cite{N} to be used in the proof of our main results. Throughout the section we assume that  $\mathcal{H}$,  $\mathcal{H}^\ast$ satisfy \eqref{eq3}-\eqref{eq4}. We
let
 \begin{eqnarray*}
 \mathcal{L}_{||}:=-\div_{||} A_{||}\nabla_{||},
 \end{eqnarray*}
where $\div_{||}$ is the divergence operator in the variables $(\partial_{x_1},...,\partial_{x_n})$. $A_{||}$ is the $n\times n$-dimensional sub matrix of $A$ defined by $\{A_{i,j}\}_{i,j=1}^n$.  We also let
     \begin{eqnarray*}
     \mathcal{H}_{||}:=\partial_t+\mathcal{L}_{||},\quad \mathcal{H}_{||}^\ast:=-\partial_t+\mathcal{L}_{||}^\ast.
 \end{eqnarray*}
    Using this notation the equation $\mathcal{H} u=0$ can be written, formally, as
     \begin{eqnarray}\label{block++}
     	\mathcal{H}_{||}u-\sum_{j=1}^{n+1}A_{n+1,j}D_{n+1}D_ju-\sum_{i=1}^{n}D_i(A_{i,n+1}D_{n+1}u)=0.
     \end{eqnarray}
     In the proof of Lemma \ref{lemsl1c} below we will use that \eqref{block++} holds in an appropriate weak sense on cross sections $\lambda=$ constant. Indeed, let $\lambda\in (a,b)$ and let $\epsilon<\min (\lambda-a,b-\lambda)$. Set $\varphi_\epsilon(\sigma)=\epsilon^{-1}\varphi(\sigma/\epsilon)$ where $\varphi\in C_0^\infty(-1/2,1/2)$, $0\leq\varphi$, $\int\varphi\, d\sigma=1$. We let
     $\phi_{\lambda,\epsilon}(x,t, \sigma)=\psi(x,t)\varphi_\epsilon(\sigma)$ where $\psi\in C_0^\infty(\mathbb R^{n+1},\mathbb C)$.  Then, by the notion of weak solutions we have
     \begin{eqnarray}\label{weak}
&&\int_{\mathbb R^{n+2}} \biggl( A_{||}(x)\nabla_{||} u(x,t, \sigma)\cdot\nabla_{||}\overline{\phi_{\lambda,\epsilon}(x,t, \sigma)}-u(x,t, \sigma)\partial_t\overline{\phi_{\lambda,\epsilon}(x,t, \sigma)}\biggr )\, dxdtd\sigma\notag\\
&&=\sum_{j=1}^{n+1}\int_{\mathbb R^{n+2}}  A_{n+1,j}(x)\partial_{x_j}\partial_\lambda u(x,t, \sigma)\overline{\phi_{\lambda,\epsilon}(x,t, \sigma)}\, dxdtd\sigma\notag\\
&&-\sum_{i=1}^{n}\int_{\mathbb R^{n+2}}  A_{i,n+1}(x)\partial_\lambda u(x,t, \sigma)\partial_{x_i}\overline{\phi_{\lambda,\epsilon}(x,t, \sigma)}\, dxdtd\sigma.
\end{eqnarray}
Hence, if
    \begin{eqnarray}\label{block++apa}
     \nabla u,\ \nabla\partial_\lambda u\in L^2(\mathbb R^{n+1},\mathbb C^{n+1}),
     \end{eqnarray}
     uniformly in $\lambda\in(a,b)$, with norms depending continuously on $\lambda\in(a,b)$, then we can conclude, by letting $\eta\to 0$ in \eqref{weak}, that
          \begin{eqnarray}\label{weak+}
&&\int_{\mathbb R^{n+1}}  \biggl( A_{||}(x)\nabla_{||} u(x,t, \lambda)\cdot\nabla_{||}\overline{\psi(x,t)}-u(x,t, \lambda)\partial_t\overline{\psi(x,t)}\biggr )\, dxdt\notag\\
&&=\sum_{j=1}^{n+1}\int_{\mathbb R^{n+1}}  A_{n+1,j}(x)\partial_{x_j}\partial_\lambda u(x,t, \lambda)\overline{\psi(x,t)}\, dxdt\notag\\
&&-\sum_{i=1}^{n}\int_{\mathbb R^{n+1}}  A_{i,n+1}(x)\partial_\lambda u(x,t, \lambda)\partial_{x_i}\overline{\psi(x,t)}\, dxdt.
\end{eqnarray}
In this sense, and under these assumptions, \eqref{block++} holds on cross sections $\lambda=$ constant.

\subsection{Resolvents and a parabolic Hodge decomposition associated to $\mathcal{H}_{||}$} Recall the function space $\mathbb H={\mathbb H}(\mathbb R^{n+1},\mathbb C)$ introduced in \eqref{hsapace}. We let $\mathbb H^\ast={\mathbb H}^\ast(\mathbb R^{n+1},\mathbb C)$ be the space dual to $\mathbb H$, with norm $||\cdot||_{\mathbb H^\ast}$,  and we let $\langle\cdot,\cdot\rangle_{\mathbb H^\ast}:\mathbb H^\ast\times\mathbb H\to\mathbb C $ denote the duality pairing. We let
$\bar{\mathbb H}=\bar{\mathbb H}(\mathbb R^{n+1},\mathbb C)$ be the closure of $C_0^\infty(\mathbb R^{n+1},\mathbb C)$ with respect to the norm
\begin{eqnarray*}
	\|f\|_{\bar{\mathbb H}}:=\|f\|_{\mathbb H}+\|f\|_2.
\end{eqnarray*}
 We let $\bar{\mathbb H}^\ast=\bar{\mathbb H}^\ast(\mathbb R^{n+1},\mathbb C)$ be the space dual to $\bar {\mathbb H}$, with norm $||\cdot||_{\bar {\mathbb H}^\ast}$, and we let $\langle\cdot,\cdot\rangle_{\bar {\mathbb H}^\ast}:\bar{\mathbb H}^\ast\times\bar{\mathbb H}\to\mathbb C $ denote the duality pairing. Let  $B:\mathbb H\times  \mathbb H\to\mathbb R$ be defined as
       \begin{eqnarray}\label{form2-}
  B(u,\phi):= \int_{\mathbb R^{n+1}}
      (A_{||}\nabla_{||} u\cdot\nabla_{||}\bar \phi-D_{1/2}^tu\overline{H_tD_{1/2}^t\phi})\, dxdt,
      \end{eqnarray}
      and let, for $\delta\in (0,1)$, $ B_\delta:\mathbb H\times  \mathbb H\to\mathbb R$ be defined as
       \begin{eqnarray}\label{form2}
  B_\delta(u,\phi)&:=&\int_{\mathbb R^{n+1}}A_{||}\nabla_{||} u\cdot\overline{\nabla_{||}(I+\delta H_t)\phi}\, dxdt\notag\\
  &&-\int_{\mathbb R^{n+1}}D_{1/2}^tu\overline{H_tD_{1/2}^t(I+\delta H_t)\phi}\, dxdt.
      \end{eqnarray}

     \begin{definition}\label{de1} Let $F\in {\mathbb H}^\ast(\mathbb R^{n+1},\mathbb C)$. We say that a function
      $u\in {\mathbb H}(\mathbb R^{n+1},\mathbb C)$ is a (weak) solution to the equation $\mathcal{H}_{||}u=F$,
in $\mathbb R^{n+1}$, if
     \begin{eqnarray*}\label{eq4-}
B(u,\phi)=\langle F,\phi\rangle_{\mathbb H^\ast},
    \end{eqnarray*}
    whenever $\phi\in {\mathbb H}(\mathbb R^{n+1},\mathbb C)$.
      \end{definition}

          \begin{definition}\label{de2} Let $\lambda>0$ be given. Let $F\in \bar{\mathbb H}^\ast(\mathbb R^{n+1},\mathbb C)$. We say that a function
      $u\in\bar{\mathbb{H}}(\mathbb R^{n+1},\mathbb C)$ is a (weak) solution to the equation $u+\lambda^2\mathcal{H}_{||}u=F$,
in $\mathbb R^{n+1}$, if
     \begin{eqnarray*}\label{eq4-a}
\int_{\mathbb R^{n+1}}u\bar \phi\, dxdt+\lambda^2 B(u,\phi)=\langle F,\phi\rangle_{\bar{\mathbb H}^\ast},
    \end{eqnarray*}
whenever $\phi\in \bar{\mathbb H}(\mathbb R^{n+1},\mathbb C)$.
      \end{definition}

\begin{lemma}\label{parahodge} Consider the operator $\mathcal{H}_{||}=\partial_t-\div_{||} A_{||}\nabla_{||}$ and assume that $A$ satisfies \eqref{eq3}, \eqref{eq4}. Let  $F\in {\mathbb H}^\ast(\mathbb R^{n+1},\mathbb C)$. Then there exists a weak solution to the equation $\mathcal{H}_{||}u=F$, in $\mathbb R^{n+1}$, in the sense of Definition \ref{de1}. Furthermore,
$$||u||_{\mathbb H}\leq c||F||_{\mathbb H^\ast},$$
for some constant $c$ depending only on $n$ and $\Lambda$. The solution is unique up to a constant.
\end{lemma}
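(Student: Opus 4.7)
The approach will be to solve the equation by applying the Lax-Milgram theorem to a suitable perturbation of $B$, namely the form $B_\delta$ already introduced in \eqref{form2}. The obstruction to applying Lax-Milgram directly to $B$ is that $B$ is not coercive on $\mathbb{H}\times \mathbb{H}$: since $H_t$ is a Fourier multiplier with purely imaginary symbol $i\,\mathrm{sgn}(\tau)$, Plancherel gives
\[
\int_{\mathbb{R}^{n+1}} D^t_{1/2} u\,\overline{H_t D^t_{1/2} u}\,dxdt = -i\int \mathrm{sgn}(\tau)|\widehat{D^t_{1/2} u}|^2\,d\xi d\tau,
\]
which is purely imaginary, so that $\Re B(u,u)\geq \Lambda^{-1}\|\nabla_{||} u\|_2^2$ only, with no control on $\|D^t_{1/2} u\|_2$.

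The first key step is to establish coercivity of $B_\delta$ for a suitably small $\delta>0$. Using the identities $H_t^2=-I$ and $[H_t,D^t_{1/2}]=0$ (both immediate on the Fourier side), I would compute
\[
B(u,H_t u) = \int A_{||}\nabla_{||} u\cdot\overline{\nabla_{||} H_t u}\,dxdt + \|D^t_{1/2} u\|_2^2,
\]
the second term coming from $-H_t D^t_{1/2} H_t = D^t_{1/2}$. Bounding the first term by $\Lambda\|\nabla_{||} u\|_2^2$ via \eqref{eq3}(ii) and the $L^2$-isometry of $H_t$, I obtain
\[
\Re B_\delta(u,u) = \Re B(u,u)+\delta\,\Re B(u,H_t u) \geq (\Lambda^{-1}-\delta \Lambda)\|\nabla_{||} u\|_2^2 + \delta\|D^t_{1/2} u\|_2^2.
\]
Choosing $\delta:=(2\Lambda^2)^{-1}$ (and fixing it henceforth) then yields $\Re B_\delta(u,u)\geq c(\Lambda)\|u\|_{\mathbb{H}}^2$ via \eqref{uau}(i). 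Continuity, $|B_\delta(u,\phi)|\leq c\|u\|_{\mathbb{H}}\|\phi\|_{\mathbb{H}}$, follows from \eqref{eq3}(ii) and the $L^2$-boundedness of $H_t$.

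Next, given $F\in\mathbb{H}^*$, I would define $\tilde F\in\mathbb{H}^*$ by $\langle \tilde F,\phi\rangle_{\mathbb{H}^*}:=\langle F,(I+\delta H_t)\phi\rangle_{\mathbb{H}^*}$; this is bounded since $(I+\delta H_t):\mathbb{H}\to\mathbb{H}$ is, with $\|\tilde F\|_{\mathbb{H}^*}\leq (1+\delta)\|F\|_{\mathbb{H}^*}$. Lax-Milgram applied to the bounded coercive form $B_\delta$ with right-hand side $\tilde F$ then produces a unique $u\in\mathbb{H}$ satisfying $B_\delta(u,\phi)=\langle \tilde F,\phi\rangle_{\mathbb{H}^*}$ for all $\phi\in\mathbb{H}$, together with $\|u\|_{\mathbb{H}}\leq c\|F\|_{\mathbb{H}^*}$. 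Unwinding the definitions, this reads $B(u,(I+\delta H_t)\phi)=\langle F,(I+\delta H_t)\phi\rangle_{\mathbb{H}^*}$; since $(I+\delta H_t)$ is invertible on $\mathbb{H}$ by a Neumann series (using that $H_t$ is an $L^2$-isometry commuting with $\nabla_{||}$ and $D^t_{1/2}$), any $\psi\in\mathbb{H}$ has the form $(I+\delta H_t)\phi$, so $B(u,\psi)=\langle F,\psi\rangle_{\mathbb{H}^*}$ for all $\psi\in\mathbb{H}$, as required. For uniqueness, any $v\in\mathbb{H}$ with $B(v,\phi)=0$ for all $\phi$ satisfies $B_\delta(v,v)=B(v,(I+\delta H_t)v)=0$, whence $v=0$ in $\mathbb{H}$ by coercivity; as constants are identified with $0$ in $\mathbb{H}$, this is precisely uniqueness up to a constant. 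The principal subtlety in executing this plan is the coercivity computation, which turns on the sign structure contributed by $H_t^2=-I$ and must be handled carefully with complex test functions.
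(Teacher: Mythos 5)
Your proposal is correct and follows essentially the same route as the paper: both perturb $B$ to the form $B_\delta(u,\phi)=B(u,(I+\delta H_t)\phi)$ from \eqref{form2}, verify boundedness and coercivity for small $\delta$, apply Lax--Milgram, and then undo the perturbation using the invertibility of $(I+\delta H_t)$ on $\mathbb H$. Your explicit coercivity computation via $H_t^2=-I$ (which the paper only asserts, deferring to Lemma 2.6 of \cite{N}) is accurate, as is the treatment of uniqueness modulo constants.
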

\begin{proof} This is essentially  Lemma 2.6 in \cite{N}. Let
$\phi_\delta:=(I+\delta H_t)\phi$, $\phi\in {\mathbb H}(\mathbb R^{n+1},\mathbb C)$, $\delta\in (0,1)$.  Then
$$|\langle F,\phi_\delta\rangle_{\mathbb H^\ast}|\leq c||F||_{\mathbb H^\ast}||\phi||_{\mathbb H}.$$
Consider the sesquilinear form $B_\delta(\cdot,\cdot)$ introduced in \eqref{form2}. If $\delta=\delta(n,\Lambda)$ is small enough, then $B_\delta(\cdot,\cdot)$ is a sesquilinear, bounded, coercive form on $\mathbb H\times \mathbb H$. Hence, using the Lax-Milgram theorem we see that there exists a unique
$u\in {\mathbb H}$ such that
$$B(u,\phi_\delta)= B_\delta(u,\phi)=\langle F,\phi_\delta\rangle_{\mathbb H^\ast},$$
for all $\phi\in \mathbb H$. Using that $(I+\delta H_t)$ is invertible on $\mathbb H$, if $0<\delta\ll 1$ is small enough, we can conclude that
$$B(u,\psi)= \langle F,\psi\rangle_{\mathbb H^\ast},$$
whenever $\psi\in {\mathbb H}$. The bound $||u||_{\mathbb H}\leq c||F||_{\mathbb H^\ast}$ follows readily. This completes the existence and quantitative part of the lemma. The statement concerning uniqueness follows immediately.\end{proof}

\begin{lemma}\label{parahodge+} Let $\lambda>0$ be given. Consider the operator $\mathcal{H}_{||}=\partial_t-\div_{||} A_{||}\nabla_{||}$ and assume that $A$ satisfies \eqref{eq3}, \eqref{eq4}. Let  $F\in \bar{\mathbb H}^\ast(\mathbb R^{n+1},\mathbb C)$. Then there exists a weak solution to the equation $u+\lambda^2\mathcal{H}_{||}u=F$, in $\mathbb R^{n+1}$, in the sense of Definition \ref{de2}. Furthermore,
$$||u||_{2}+||\lambda\nabla_{||} u||_{2}+||\lambda D_{1/2}^tu||_{2}\leq c||F||_{\bar{\mathbb H}^\ast},$$
for some constant $c$ depending only on $n$ and $\Lambda$. The solution is unique.
\end{lemma}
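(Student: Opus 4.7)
The plan is to mirror the Lax--Milgram argument used for Lemma \ref{parahodge}, replacing $B$ with the resolvent sesquilinear form
\[
\mathcal A(u,\phi):=\int_{\mathbb R^{n+1}}u\bar\phi\,dxdt+\lambda^2 B(u,\phi)
\]
on $\bar{\mathbb H}\times\bar{\mathbb H}$. The form $\mathcal A$ is not coercive on its own, essentially because the skew-adjoint piece $-\int D_{1/2}^tu\,\overline{H_tD_{1/2}^t\phi}\,dxdt$ inside $B$ has zero real part when $\phi=u$ and therefore provides no direct control of $\lambda D_{1/2}^tu$. I would circumvent this exactly as in Lemma \ref{parahodge}, by testing against $(I+\delta H_t)\phi$ for a small $\delta=\delta(\Lambda)\in(0,1)$ to be chosen, i.e.\ working with
\[
\mathcal A_\delta(u,\phi):=\int_{\mathbb R^{n+1}}u\,\overline{(I+\delta H_t)\phi}\,dxdt+\lambda^2 B_\delta(u,\phi),
\]
where $B_\delta$ is the form already introduced in \eqref{form2}.

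The heart of the argument is the coercivity calculation for $\mathcal A_\delta$. Taking $\phi=u$, Plancherel's theorem together with the skew-adjointness of $H_t$ gives both $\mathrm{Re}\int u\,\overline{H_tu}\,dxdt=0$ and $\mathrm{Re}\int D_{1/2}^tu\,\overline{H_tD_{1/2}^tu}\,dxdt=0$, while the identity $H_t^2=-I$ produces the positive contribution $-\mathrm{Re}\int D_{1/2}^tu\,\overline{H_tD_{1/2}^tH_tu}\,dxdt=\|D_{1/2}^tu\|_2^2$. Combining these with ellipticity in \eqref{eq3} for the $\nabla_{||}$ term and the crude bound $\bigl|\int A_{||}\nabla_{||}u\cdot\overline{\nabla_{||}H_tu}\,dxdt\bigr|\leq\Lambda\|\nabla_{||}u\|_2^2$ (using that $H_t$ has $L^2$ norm one and commutes with $\nabla_{||}$), one obtains
\[
\mathrm{Re}\,\mathcal A_\delta(u,u)\geq\|u\|_2^2+\lambda^2\bigl((\Lambda^{-1}-\delta\Lambda)\|\nabla_{||}u\|_2^2+\delta\|D_{1/2}^tu\|_2^2\bigr),
\]
and a choice of $\delta=\delta(\Lambda)$ small enough yields the coercive lower bound
\[
\mathrm{Re}\,\mathcal A_\delta(u,u)\geq c(\Lambda)\bigl(\|u\|_2^2+\|\lambda\nabla_{||}u\|_2^2+\|\lambda D_{1/2}^tu\|_2^2\bigr),
\]
which is exactly the square of the norm appearing in the desired estimate.

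The remaining steps will be routine. Boundedness of $\mathcal A_\delta$ on $\bar{\mathbb H}\times\bar{\mathbb H}$ is immediate and the linear functional $\phi\mapsto\langle F,(I+\delta H_t)\phi\rangle_{\bar{\mathbb H}^*}$ is continuous on $\bar{\mathbb H}$, so the Lax--Milgram theorem produces a unique $u\in\bar{\mathbb H}$ with $\mathcal A_\delta(u,\phi)=\langle F,(I+\delta H_t)\phi\rangle_{\bar{\mathbb H}^*}$ for every $\phi\in\bar{\mathbb H}$. The Neumann series argument used at the end of the proof of Lemma \ref{parahodge} shows that $I+\delta H_t$ is invertible on $\bar{\mathbb H}$ for $\delta$ small, so $(I+\delta H_t)\phi$ may be replaced by an arbitrary $\psi\in\bar{\mathbb H}$ and we recover the weak formulation of Definition \ref{de2}. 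Inserting $\phi=u$, combining the coercive lower bound with $|\langle F,(I+\delta H_t)u\rangle_{\bar{\mathbb H}^*}|\leq c(\Lambda)\|F\|_{\bar{\mathbb H}^*}\|u\|_{\bar{\mathbb H}}$ and using a Young-type absorption will yield the quantitative bound, while uniqueness follows because the difference of any two solutions forces the coercive lower bound to equal zero. The hard part will be to manage the $\lambda$-weights in the absorption step: one must exploit the coercivity in its term-by-term form, pairing $\|u\|_2$ and $\lambda\|\mathbb Du\|_2$ separately against the corresponding pieces of $\|F\|_{\bar{\mathbb H}^*}$, so that the resulting constant depends only on $n$ and $\Lambda$ and not on $\lambda$.
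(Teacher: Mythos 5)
Your Lax--Milgram argument with the perturbed form $\mathcal A_\delta(u,\phi)=\int u\,\overline{(I+\delta H_t)\phi}\,dxdt+\lambda^2B_\delta(u,\phi)$ is exactly the route the paper intends (it only cites Lemma 2.7 of \cite{N}, whose proof is the same hidden-coercivity trick used for Lemma \ref{parahodge}), and your coercivity computation is correct: $\mathrm{Re}\int u\overline{H_tu}=0$, $\mathrm{Re}\int D_{1/2}^tu\,\overline{H_tD_{1/2}^tu}=0$, $H_tD_{1/2}^tH_t=-D_{1/2}^t$, and the cross term $\delta\int A_{||}\nabla_{||}u\cdot\overline{\nabla_{||}H_tu}$ is controlled by $\delta\Lambda\|\nabla_{||}u\|_2^2$, so that $\mathrm{Re}\,\mathcal A_\delta(u,u)\gtrsim \|u\|_2^2+\|\lambda\nabla_{||}u\|_2^2+\|\lambda D_{1/2}^tu\|_2^2$ for $\delta=\delta(\Lambda)$ small. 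Existence, the replacement of $(I+\delta H_t)\phi$ by arbitrary $\psi$, and uniqueness all go through as you describe.

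The one genuine gap is the step you defer to the end: obtaining the bound with a constant independent of $\lambda$. This is not routine bookkeeping, and with the dual norm as literally defined in the paper (dual to $\|\phi\|_{\bar{\mathbb H}}=\|\mathbb D\phi\|_2+\|\phi\|_2$, with no $\lambda$-weight) it cannot be done: the coercive lower bound controls only $\|u\|_2+\lambda\|\mathbb Du\|_2$, while the right-hand side $|\langle F,(I+\delta H_t)u\rangle_{\bar{\mathbb H}^\ast}|\le c\|F\|_{\bar{\mathbb H}^\ast}(\|u\|_2+\|\mathbb Du\|_2)$ involves the \emph{unweighted} $\|\mathbb Du\|_2$, and Young's inequality then yields only $\|u\|_2+\lambda\|\mathbb Du\|_2\le c(1+\lambda^{-1})\|F\|_{\bar{\mathbb H}^\ast}$. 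This loss is real, not an artifact: for $\mathcal{H}_{||}=\partial_t-\Delta_{||}$ and $F=\div_{||}{\bf g}$ one has $\|F\|_{\bar{\mathbb H}^\ast}\le\|{\bf g}\|_2$, yet the multiplier $i\xi_1/(1+i\lambda^2\tau+\lambda^2|\xi|^2)$ has size $\approx\lambda^{-1}$ near $|\xi|\approx\lambda^{-1}$, $\tau=0$, so $\|u\|_2\approx\lambda^{-1}\|{\bf g}\|_2$ is attained. The estimate as stated therefore has to be read with the $\lambda$-adapted dual norm, i.e.\ with $\|F\|$ taken as (equivalent to) $\inf\{\|f_0\|_2+\lambda^{-1}\|{\bf f}_1\|_2+\lambda^{-1}\|f_2\|_2\}$ over representations $F=f_0+\div_{||}{\bf f}_1+D^t_{1/2}f_2$; with such a representation one gets $|\langle F,(I+\delta H_t)u\rangle|\le c\|F\|\,(\|u\|_2+\|\lambda\nabla_{||}u\|_2+\|\lambda D_{1/2}^tu\|_2)$ and the absorption closes with $c=c(n,\Lambda)$. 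You should either prove such a representation of $F$ (Riesz representation on the Hilbert space $\bar{\mathbb H}$ equipped with the $\lambda$-weighted inner product does it) or state explicitly that this is the norm being used; as written, "pairing against the corresponding pieces of $\|F\|_{\bar{\mathbb H}^\ast}$" has no meaning for an abstract functional and the uniform constant does not follow.
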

\begin{proof} See the proof of Lemma 2.7 in \cite{N}.\end{proof}

    \begin{remark}\label{rem1} Definition \ref{de1}, Definition \ref{de2}, Lemma \ref{parahodge}, and Lemma \ref{parahodge+}, all have analogous formulations for the
    operator  $\mathcal{H}_{||}^\ast$.
    \end{remark}
      \begin{remark}\label{rem2} Let $\lambda>0$ be given. Consider the operator $\mathcal{H}_{||}=\partial_t-\div_{||} A_{||}\nabla_{||}$. Let  $F\in \bar{\mathbb H}^\ast(\mathbb R^{n+1},\mathbb C)$. By Lemma \ref{parahodge+} the equation $u+\lambda^2\mathcal{H}_{||}u=F$ has a unique weak solution $u\in\bar{\mathbb H}$. From now on we will denote this solution by $ \mathcal{E}_\lambda F$. In the case of the operator $\mathcal{H}_{||}^\ast$ we denote the corresponding solution by $ \mathcal{E}_\lambda^\ast F$. In this sense $\mathcal{E}_\lambda=(I+\lambda^2\mathcal{H}_{||})^{-1}$ and $\mathcal{E}_\lambda^\ast=(I+\lambda^2\mathcal{H}_{||}^\ast)^{-1}$.
    \end{remark}

    Consider $\lambda>0$ fixed, let $|h|\ll \lambda$ and consider $F\in \bar{\mathbb H}^\ast(\mathbb R^{n+1},\mathbb C)$. By definition,
      \begin{eqnarray}\label{cla1}
\int_{\mathbb R^{n+1}}\mathcal{E}_{\lambda+h}F\bar \phi\, dxdt+(\lambda+h)^2 B(\mathcal{E}_{\lambda+h}F,\phi)&=&\langle F,\phi\rangle_{\bar{\mathbb H}^\ast},\notag\\
\int_{\mathbb R^{n+1}}\mathcal{E}_{\lambda}F\bar \phi\, dxdt+\lambda^2 B(\mathcal{E}_{\lambda}F,\phi)&=&\langle F,\phi\rangle_{\bar{\mathbb H}^\ast},
    \end{eqnarray}
 for all $\phi\in \bar{\mathbb H}(\mathbb R^{n+1},\mathbb C)$. We let $\mathcal{D}_{\lambda}^hF:=\mathcal{E}_{\lambda+h}F-\mathcal{E}_{\lambda}F$. \eqref{cla1} implies
      \begin{eqnarray}\label{cla1+}
\int_{\mathbb R^{n+1}}\mathcal{D}_{\lambda}^hF\bar \phi_\delta\, dxdt+\lambda^2 B(\mathcal{D}_{\lambda}^hF,\phi_\delta)=-h(2\lambda+h)B(\mathcal{E}_{\lambda+h}F,\phi_\delta)
    \end{eqnarray}
     for all $\phi\in \bar{\mathbb H}(\mathbb R^{n+1},\mathbb C)$, $\phi_\delta:=(I+\delta H_t)\phi$. Again, arguing as in the proof of Lemma \ref{parahodge+} we see, if $\delta=\delta(n,\Lambda)$, $0<\delta\ll 1$ is small enough and as $\mathcal{D}_{\lambda}^hF\in \bar{\mathbb H}(\mathbb R^{n+1},\mathbb C)$, that
         \begin{eqnarray}\label{cla2}
||\mathcal{D}_{\lambda}^hF||_{2}+||\lambda\nabla_{||} \mathcal{D}_{\lambda}^hF||_{2}+||\lambda D_{1/2}^t\mathcal{D}_{\lambda}^hF||_{2}\leq c|h|||\mathcal{E}_{\lambda+h}F||_2\leq c|h|||F||_{\bar{\mathbb H}^\ast},
    \end{eqnarray}
    where $c$ is independent of $h$. Hence
             \begin{eqnarray}\label{cla3}
\lim_{h\to 0}\mathcal{D}_{\lambda}^hF=\lim_{h\to 0}\bigl (\mathcal{E}_{\lambda+h}F-\mathcal{E}_{\lambda}F\bigr)=0
    \end{eqnarray}
    in the sense that
             \begin{eqnarray}\label{cla4}
||\mathcal{D}_{\lambda}^hF||_{2}+||\lambda\nabla_{||} \mathcal{D}_{\lambda}^hF||_{2}+||\lambda D_{1/2}^t\mathcal{D}_{\lambda}^hF||_{2}\to 0\mbox{ as }h\to 0.
    \end{eqnarray}
    Similarly,
          \begin{eqnarray}\label{cla5}
\int_{\mathbb R^{n+1}}h^{-1}\mathcal{D}_{\lambda}^hF\bar \phi_\delta\, dxdt+\lambda^2 B(h^{-1}\mathcal{D}_{\lambda}^hF,\phi_\delta)=-(2\lambda+h)B(\mathcal{E}_{\lambda+h}F,\phi_\delta)
    \end{eqnarray}
    and hence
    \begin{eqnarray}\label{cla6}
||h^{-1}\mathcal{D}_{\lambda}^hF||_{2}+||\lambda\nabla_{||} (h^{-1}\mathcal{D}_{\lambda}^hF)||_{2}+||\lambda D_{1/2}^t(h^{-1}\mathcal{D}_{\lambda}^hF)||_{2}\leq c||F||_{\bar{\mathbb H}^\ast},
    \end{eqnarray}
    where $c$ is independent of $h$. Using \eqref{cla6}, \eqref{cla5} and \eqref{cla4} we see, as $\lambda$ is fixed, that
    \begin{eqnarray}\label{cla7}
\lim_{h\to 0}h^{-1}\mathcal{D}_{\lambda}^hF=:\mathcal{G}_\lambda F\mbox{ weakly in }\bar{\mathbb H}(\mathbb R^{n+1},\mathbb C),
    \end{eqnarray}
that \eqref{cla6} holds with $h^{-1}\mathcal{D}_{\lambda}^hF$ replaced by $\mathcal{G}_\lambda F$ and that
          \begin{eqnarray}\label{cla8}
\int_{\mathbb R^{n+1}}\mathcal{G}_\lambda F\bar \phi\, dxdt+\lambda^2 B(\mathcal{G}_\lambda F,\phi)=-2\lambda B(\mathcal{E}_\lambda F,\phi)=-2\lambda\langle\mathcal{H}_{||}\mathcal{E}_\lambda F,\phi\rangle_{\bar{\mathbb H}^\ast}
    \end{eqnarray}
     whenever $\phi\in \bar{\mathbb H}(\mathbb R^{n+1},\mathbb C)$. We define
     \begin{eqnarray}\label{cla8+}\partial_\lambda\mathcal{E}_\lambda F:=\mathcal{G}_\lambda F
     \end{eqnarray} and hence
               \begin{eqnarray}\label{cla9}
               \partial_\lambda\mathcal{E}_\lambda F=-2\lambda\mathcal{E}_\lambda \mathcal{H}_{||}\mathcal{E}_\lambda F
    \end{eqnarray}
    in the sense of \eqref{cla8}. Furthermore, if $F=f\in\mathbb H(\mathbb R^{n+1},\mathbb C)$ then
                \begin{eqnarray}\label{cla10}
         \langle \mathcal{H}_{||}\mathcal{E}_\lambda f,\phi\rangle_{\bar{\mathbb H}^\ast}-\langle\mathcal{E}_\lambda \mathcal{H}_{||} f,\phi\rangle_{\bar{\mathbb H}^\ast}&=&\langle \mathcal{H}_{||}\mathcal{E}_\lambda f,\phi\rangle_{\bar{\mathbb H}^\ast}-\langle \mathcal{H}_{||} f,\mathcal{E}_\lambda^\ast\phi\rangle_{\bar{\mathbb H}^\ast}\notag\\
         &=&B(\mathcal{E}_\lambda f,\phi)-B(f,\mathcal{E}_\lambda^\ast\phi)=0,
    \end{eqnarray}
    and hence $\mathcal{H}_{||}$ and $\mathcal{E}_\lambda$ commute in this sense. Furthermore, as $A$ is independent of $t$ we can, by arguing similarly, conclude that if $f\in\mathbb H(\mathbb R^{n+1},\mathbb C)$, then
                    \begin{eqnarray}\label{cla11}
        \langle \partial_t\mathcal{E}_\lambda f,\phi\rangle_{\bar{\mathbb H}^\ast}-\langle\mathcal{E}_\lambda \partial_t f,\phi\rangle_{\bar{\mathbb H}^\ast}=0=\langle \mathcal{L}_{||}\mathcal{E}_\lambda f,\phi\rangle_{\bar{\mathbb H}^\ast}-\langle\mathcal{E}_\lambda \mathcal{L}_{||} f,\phi\rangle_{\bar{\mathbb H}^\ast}
    \end{eqnarray}
    and hence $\partial_t$ and $\mathcal{E}_\lambda$, and $\mathcal{L}_{||}$ and $\mathcal{E}_\lambda$, commute in this sense. In particular, if
    $F=f\in\mathbb H(\mathbb R^{n+1},\mathbb C)$ then
    \begin{eqnarray}\label{cla12}
               \partial_\lambda\mathcal{E}_\lambda f=-2\lambda\mathcal{E}_\lambda^2 \mathcal{H}_{||} f
    \end{eqnarray}
    in the sense of \eqref{cla8}.

\subsection{Estimates of resolvents}  We here collect a set of the estimates for $\mathcal{E}_\lambda f$ and $\mathcal{E}_\lambda^\ast f$ to be used in the next section.

 \begin{lemma}\label{le8-}  Let $\lambda>0$ be given. Consider the operator $\mathcal{H}_{||}=\partial_t-\div_{||} A_{||}\nabla_{||}$ and assume that $A$ satisfies \eqref{eq3}, \eqref{eq4}. Let $\Theta_\lambda$ denote any of  the operators
       \begin{eqnarray*}
       &&\mbox{$\mathcal{E}_\lambda$, $\lambda \nabla_{||}\mathcal{E}_\lambda$, $\lambda D_{1/2}^t\mathcal{E}_\lambda$},
       \end{eqnarray*}
       or
   \begin{eqnarray*}
       &&\mbox{$\lambda\mathcal{E}_\lambda D_{1/2}^t$, $\lambda^2 \nabla_{||}\mathcal{E}_\lambda D_{1/2}^t $, $\lambda^2 D_{1/2}^t\mathcal{E}_\lambda D_{1/2}^t$},
       \end{eqnarray*}
       and let $\tilde \Theta_\lambda$ denote any of  the operators
         \begin{eqnarray*}
       &&\mbox{$\lambda\mathcal{E}_\lambda\div_{||} $, $\lambda^2 \nabla_{||}\mathcal{E}_\lambda\div_{||} $, $\lambda^2 D_{1/2}^t\mathcal{E}_\lambda\div_{||}$}.
       \end{eqnarray*}
       Then there exist $c$, depending only on $n, \Lambda$, such that
           \begin{eqnarray*}
       (i)&&\int_{\mathbb R^{n+1}}\ |\Theta_\lambda f(x,t)|^2\, dxdt\leq c\int_{\mathbb R^{n+1}}\ |f(x,t)|^2\, dxdt,\notag\\
       (ii)&&\int_{\mathbb R^{n+1}}\ |\tilde \Theta_\lambda  {\bf f}(x,t)|^2\, dxdt\leq
       c\int_{\mathbb R^{n+1}}\ |{\bf f}(x,t)|^2\, dxdt,
       \end{eqnarray*}
      whenever $f\in L^2(\mathbb R^{n+1},\mathbb C)$, ${\bf f}\in L^2(\mathbb R^{n+1},\mathbb C^{n})$.\end{lemma}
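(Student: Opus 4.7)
The plan is to derive all nine estimates as direct consequences of Lemma~\ref{parahodge+}, by identifying in each case an appropriate right-hand side $F$ for the resolvent equation $u + \lambda^2\mathcal{H}_{||}u = F$ and then controlling $\|F\|$ in a $\lambda$-scaled dual norm. To this end I would first observe that the estimate of Lemma~\ref{parahodge+} holds in the stronger scale-invariant form
$$\|u\|_2 + \|\lambda\nabla_{||}u\|_2 + \|\lambda D_{1/2}^t u\|_2 \leq c\|F\|_{\bar{\mathbb H}_\lambda^\ast}, \qquad c = c(n,\Lambda),$$
where $\bar{\mathbb H}_\lambda$ denotes $\bar{\mathbb H}$ equipped with the norm $\|g\|_{\bar{\mathbb H}_\lambda}^2 := \|g\|_2^2 + \lambda^2\|\nabla_{||} g\|_2^2 + \lambda^2\|D_{1/2}^t g\|_2^2$ and $\bar{\mathbb H}_\lambda^\ast$ is its dual. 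This scaled version either follows from Lemma~\ref{parahodge+} at $\lambda = 1$ by parabolic dilation $(x,t)\mapsto(\lambda x,\lambda^2 t)$ (which preserves the ellipticity of $A_{||}$ since $A$ depends only on $x$), or equivalently from the Lax--Milgram argument in the proof of Lemma~\ref{parahodge+} applied directly to the coercive form $(u,\phi)\mapsto\int u\bar\phi + \lambda^2 B(u,\phi)$ on $\bar{\mathbb H}_\lambda\times\bar{\mathbb H}_\lambda$.

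With the scaled estimate in hand, the first three operators in $(i)$ are immediate: take $F = f\in L^2$ and use $\|f\|_{\bar{\mathbb H}_\lambda^\ast} \leq \|f\|_2$, which is just $\|g\|_2\leq\|g\|_{\bar{\mathbb H}_\lambda}$. For the remaining three operators in $(i)$, which have $D_{1/2}^t$ on the right, I would take $F = D_{1/2}^t f$ interpreted as an element of $\bar{\mathbb H}_\lambda^\ast$ via the pairing
$$|\langle D_{1/2}^t f, g\rangle| = |\langle f, D_{1/2}^t g\rangle| \leq \|f\|_2\|D_{1/2}^t g\|_2 \leq \lambda^{-1}\|f\|_2\|g\|_{\bar{\mathbb H}_\lambda},$$
so that $\|D_{1/2}^t f\|_{\bar{\mathbb H}_\lambda^\ast} \leq \lambda^{-1}\|f\|_2$. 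Substituting into the scaled estimate gives $\|\mathcal{E}_\lambda D_{1/2}^t f\|_2 + \|\lambda\nabla_{||}\mathcal{E}_\lambda D_{1/2}^t f\|_2 + \|\lambda D_{1/2}^t\mathcal{E}_\lambda D_{1/2}^t f\|_2 \leq c\lambda^{-1}\|f\|_2$, which rearranged yields the required bounds for $\lambda\mathcal{E}_\lambda D_{1/2}^t$, $\lambda^2\nabla_{||}\mathcal{E}_\lambda D_{1/2}^t$, and $\lambda^2 D_{1/2}^t\mathcal{E}_\lambda D_{1/2}^t$ with the correct powers of $\lambda$.

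The estimates in $(ii)$ are handled analogously by taking $F = \div_{||}\mathbf{f}$ for $\mathbf{f}\in L^2(\mathbb R^{n+1},\mathbb C^n)$. Integration by parts in the pairing against $g\in\bar{\mathbb H}_\lambda$ yields
$$|\langle\div_{||}\mathbf{f}, g\rangle| = \biggl|\int_{\mathbb R^{n+1}}\mathbf{f}\cdot\nabla_{||}\bar g\, dxdt\biggr| \leq \|\mathbf{f}\|_2\|\nabla_{||}g\|_2 \leq \lambda^{-1}\|\mathbf{f}\|_2\|g\|_{\bar{\mathbb H}_\lambda},$$
so $\|\div_{||}\mathbf{f}\|_{\bar{\mathbb H}_\lambda^\ast} \leq \lambda^{-1}\|\mathbf{f}\|_2$, and the scaled estimate applied with this $F$ gives all three bounds for $\lambda\mathcal{E}_\lambda\div_{||}$, $\lambda^2\nabla_{||}\mathcal{E}_\lambda\div_{||}$, and $\lambda^2 D_{1/2}^t\mathcal{E}_\lambda\div_{||}$.

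The main subtlety, and essentially the only non-trivial point, is the scaled version of Lemma~\ref{parahodge+}: as stated in the excerpt the lemma uses the unscaled $\bar{\mathbb H}^\ast$ norm on the right, which would not give the correct dependence on $\lambda$ when $F$ is a distribution such as $D_{1/2}^t f$ or $\div_{||}\mathbf{f}$. One must either re-examine the proof of Lemma~\ref{parahodge+} to observe that it produces the $\lambda$-uniform version above, or carry out the parabolic rescaling argument noted above; neither is difficult once the $\lambda$-weighted bilinear form is made explicit. Interpretation of $\mathcal{E}_\lambda D_{1/2}^t f$ and $\mathcal{E}_\lambda\div_{||}\mathbf{f}$ is then automatic, since both $D_{1/2}^t f$ and $\div_{||}\mathbf{f}$ belong to $\bar{\mathbb H}^\ast$ by the distributional computations above, and Lemma~\ref{parahodge+} therefore provides unique weak solutions.
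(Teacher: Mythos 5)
Your proposal is correct. The paper itself gives no argument here --- it simply cites Lemma 2.11 of \cite{N} --- and what you have written is the standard (and, in substance, the cited) proof: the $\lambda$-uniform Lax--Milgram estimate for the form $\int u\bar\phi + \lambda^2 B(u,\phi)$ in the $\lambda$-weighted norm, combined with the dual bounds $\|D_{1/2}^t f\|_{\bar{\mathbb H}_\lambda^\ast}\leq \lambda^{-1}\|f\|_2$ and $\|\div_{||}\mathbf{f}\|_{\bar{\mathbb H}_\lambda^\ast}\leq \lambda^{-1}\|\mathbf{f}\|_2$, which yield all nine operators with the correct powers of $\lambda$. One small point of wording: the form $\int u\bar\phi+\lambda^2B(u,\phi)$ is not itself coercive (the half-order time term in $B(u,u)$ has purely imaginary contribution), so the coercivity must be extracted via the twisted form $B_\delta$ with test functions $(I+\delta H_t)\phi$ exactly as in the proof of Lemma \ref{parahodge+}; since you explicitly invoke that argument, this is a cosmetic rather than substantive gap.
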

\begin{proof}  This is Lemma 2.11 in \cite{N}.\end{proof}

\begin{lemma} \label{ilem2--a} Let $\lambda>0$ be given. Consider the operator $\mathcal{H}_{||}=\partial_t-\div_{||} A_{||}\nabla_{||}$ and assume that $A$ satisfies \eqref{eq3}, \eqref{eq4}. Let  $A_{n+1}^{||}:=(A_{1,n+1},...,A_{n,n+1})$,
     \begin{eqnarray*}
\mathcal{U}_\lambda :=\lambda\mathcal{E}_\lambda \div_{||},
\end{eqnarray*}
and let
     \begin{eqnarray*}
\mathcal{R}_\lambda :=\mathcal{U}_\lambda A_{n+1}^{||}-(\mathcal{U}_\lambda A_{n+1}^{||})\P_\lambda,
\end{eqnarray*}
where $\P_\lambda$ be a parabolic approximation of the identity. Then there exists a constant $c$, depending only on $n$, $\Lambda$, such that
        \begin{eqnarray*}
 ||\mathcal{R}_\lambda f||_2\leq c(||\lambda \nabla f||_2+||\lambda^2 \partial_tf||_2),
       \end{eqnarray*}
      whenever $f\in C_0^\infty(\mathbb R^{n+1},\mathbb C)$.
     \end{lemma}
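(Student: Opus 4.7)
The plan is to split $f = \P_\lambda f + (f-\P_\lambda f)$ and write
\begin{eqnarray*}
\mathcal{R}_\lambda f = T_1 + T_2,
\end{eqnarray*}
where $T_1 := \mathcal{U}_\lambda(A_{n+1}^{||}(f-\P_\lambda f))$ and $T_2 := \mathcal{U}_\lambda(A_{n+1}^{||}\P_\lambda f) - (\mathcal{U}_\lambda A_{n+1}^{||})\P_\lambda f$. For $T_1$, applying Lemma \ref{le8-} $(ii)$ with $\tilde{\Theta}_\lambda = \mathcal{U}_\lambda$ and using $\|A_{n+1}^{||}\|_\infty \leq \Lambda$ gives $\|T_1\|_2 \leq c\Lambda \|f-\P_\lambda f\|_2$, and a first-order Taylor expansion, combined with $\int \P = 1$ and $\supp \P \subset Q_1(0)$, yields
\begin{eqnarray*}
\|f-\P_\lambda f\|_2 \leq c\bigl(\|\lambda\nabla f\|_2 + \|\lambda^2 \partial_t f\|_2\bigr),
\end{eqnarray*}
which disposes of $T_1$.

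For $T_2$, set $g := \P_\lambda f$; this is smooth, and since $\P_\lambda$ commutes with $\nabla$ and $\partial_t$ and is a contraction on $L^2$, we have $\|\nabla g\|_2 \leq \|\nabla f\|_2$ and $\|\partial_t g\|_2 \leq \|\partial_t f\|_2$. The distributional Leibniz rule $\div_{||}(A_{n+1}^{||}g) = g\,\div_{||} A_{n+1}^{||} + A_{n+1}^{||}\cdot\nabla_{||} g$ lets me split
\begin{eqnarray*}
T_2 = \lambda \mathcal{E}_\lambda\bigl(A_{n+1}^{||}\cdot\nabla_{||} g\bigr) + \lambda\bigl[\mathcal{E}_\lambda, g\bigr]\bigl(\div_{||} A_{n+1}^{||}\bigr).
\end{eqnarray*}
The first summand is controlled by $c\lambda\|\nabla_{||} g\|_2 \leq c\|\lambda\nabla f\|_2$ via Lemma \ref{le8-} $(i)$. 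For the commutator term I would invoke the resolvent identity
\begin{eqnarray*}
[\mathcal{E}_\lambda, g] = -\lambda^2 \mathcal{E}_\lambda[\mathcal{H}_{||}, g]\mathcal{E}_\lambda,
\end{eqnarray*}
obtained by sandwiching $(I+\lambda^2\mathcal{H}_{||}) g - g(I+\lambda^2\mathcal{H}_{||}) = \lambda^2[\mathcal{H}_{||},g]$ between two copies of $\mathcal{E}_\lambda$. Together with the identity $\lambda\mathcal{E}_\lambda(\div_{||} A_{n+1}^{||}) = \mathcal{U}_\lambda A_{n+1}^{||}$, this reduces the commutator contribution to $-\lambda^2\mathcal{E}_\lambda[\mathcal{H}_{||},g](\mathcal{U}_\lambda A_{n+1}^{||})$, in which $[\mathcal{H}_{||}, g]h$ is a first-order differential operator in $h$ whose coefficients are $\partial_t g$, $\nabla_{||} g$, $\mathcal{L}_{||}g$.

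The main obstacle is the rigorous $L^2$-justification of this commutator manipulation, since $A_{n+1}^{||} \in L^\infty$ only, so both $\mathcal{U}_\lambda A_{n+1}^{||}$ and $\mathcal{E}_\lambda(\div_{||} A_{n+1}^{||})$ must be interpreted in the weak sense of Definitions \ref{de1}--\ref{de2}. The cleanest route is to pair $T_2$ with a test function $\phi \in L^2$ and move the analysis to the adjoint side via
\begin{eqnarray*}
\langle T_2, \phi\rangle = -\lambda\int A_{n+1}^{||}\cdot \bigl(g\,\nabla_{||}\mathcal{E}_\lambda^\ast\phi - \nabla_{||}\mathcal{E}_\lambda^\ast(g\phi)\bigr)\, dxdt,
\end{eqnarray*}
and then redistribute the derivatives by integration by parts so that each surviving factor falls under one of the uniformly $L^2$-bounded operators of the form $\mathcal{E}_\lambda^\ast$, $\lambda\nabla_{||}\mathcal{E}_\lambda^\ast$, $\lambda D_{1/2}^t\mathcal{E}_\lambda^\ast$, $\lambda\mathcal{E}_\lambda^\ast\div_{||}$, supplied by the adjoint version of Lemma \ref{le8-}, while the remaining derivatives of $g$ are absorbed into $\|\lambda\nabla f\|_2$ and $\|\lambda^2\partial_t f\|_2$. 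This redistribution is the delicate part of the argument: it parallels the Kato-style commutator bookkeeping of \cite{AHLMcT} in the elliptic setting, now with the additional parabolic complication of tracking the half-order time derivative $D_{1/2}^t$, following the scheme developed in \cite{N}.
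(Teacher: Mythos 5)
The paper itself does not prove this lemma: it simply cites Lemma 2.27 of \cite{N}, whose proof follows the standard principal-part-approximation scheme of the Kato literature. One works on a dyadic grid of parabolic cubes $Q$ of side $\approx\lambda$, replaces $f$ near each $Q$ by a constant $c_Q$, and combines (a) parabolic off-diagonal (Gaffney-type) estimates for $\mathcal{U}_\lambda$ (Lemma 2.17 in \cite{N}, invoked in Remark \ref{lloc}), (b) a parabolic Poincar\'e inequality $\|f-c_Q\|_{L^2(2^kQ)}\lesssim 2^k\lambda\|\nabla f\|_{L^2(2^{k+1}Q)}+(2^k\lambda)^2\|\partial_tf\|_{L^2(2^{k+1}Q)}$ (the source of both terms on the right-hand side of the lemma), and (c) the local bound $\|\mathcal{U}_\lambda A_{n+1}^{||}\|_{L^2(Q)}\lesssim|Q|^{1/2}$ of Remark \ref{lloc} together with $\|\P_\lambda f-c_Q\|_{L^\infty(Q)}\lesssim\lambda^{-(n+2)/2}(\|\lambda\nabla f\|_{L^2(cQ)}+\|\lambda^2\partial_tf\|_{L^2(cQ)})$. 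Your treatment of $T_1$ and of the summand $\lambda\mathcal{E}_\lambda(A_{n+1}^{||}\cdot\nabla_{||}g)$ is correct, but those are the easy pieces.

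The genuine gap is the commutator term $\lambda[\mathcal{E}_\lambda,g](\div_{||}A_{n+1}^{||})$, which carries essentially all of the content of the lemma, and the route you propose for it does not close. After inserting $[\mathcal{E}_\lambda,g]=-\lambda^2\mathcal{E}_\lambda[\mathcal{H}_{||},g]\mathcal{E}_\lambda$, the operator $[\mathcal{H}_{||},g]$ applied to $h$ produces $h\,\partial_tg$, $\div_{||}(hA_{||}\nabla_{||}g)$ and $A_{||}\nabla_{||}g\cdot\nabla_{||}h$: genuinely bilinear expressions in which one factor is a derivative of $g=\P_\lambda f$ and the other is $h=\lambda\mathcal{E}_\lambda(\div_{||}A_{n+1}^{||})=\mathcal{U}_\lambda A_{n+1}^{||}$ (or, on the dual side, $h=\mathcal{E}_\lambda^\ast\phi$). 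You control $\lambda\nabla_{||}g$ and $\lambda^2\partial_tg$ only in $L^2$, and $h$ only in $L^2$ (indeed $\mathcal{U}_\lambda A_{n+1}^{||}$ is merely $L^2_{\mathrm{loc}}$ with no global bound at this stage); the product of two $L^2$ functions lies only in $L^1$, while every operator supplied by Lemma \ref{le8-} is bounded only on $L^2$. No redistribution of derivatives by integration by parts changes this mismatch, since integration by parts either lands a derivative on $A_{n+1}^{||}\in L^\infty$ (not allowed) or leaves you needing an $L^1\to L^1$ bound for a resolvent-type operator that you do not have. This is precisely why the identity $[\mathcal{E}_\lambda,g]=-\lambda^2\mathcal{E}_\lambda[\mathcal{H}_{||},g]\mathcal{E}_\lambda$ is used in this circle of ideas only with weights $g=e^{\alpha\eta}$ satisfying $\|\lambda\nabla g\|_\infty\lesssim1$ (namely, to prove the off-diagonal estimates themselves), and why the principal part approximation is proved by the local Poincar\'e/off-diagonal argument instead. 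Repairing your argument forces you to localize to cubes of scale $\lambda$ and use the pointwise bound on $\nabla\P_\lambda f$ and the local $L^2$ bound on $\mathcal{U}_\lambda A_{n+1}^{||}$, at which point you have reproduced the standard proof and the commutator identity is superfluous.
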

     \begin{proof}  The lemma is a consequence of Lemma 2.27  in \cite{N}.\end{proof}

\begin{lemma} \label{ilem2--} Let $\lambda>0$ be given. Consider the operator $\mathcal{H}_{||}=\partial_t-\div_{||} A_{||}\nabla_{||}$ and assume that $A$ satisfies \eqref{eq3}, \eqref{eq4}. Let  $A_{n+1}^{||}:=(A_{1,n+1},...,A_{n,n+1})$,
     \begin{eqnarray*}
\mathcal{U}_\lambda :=\lambda\mathcal{E}_\lambda \div_{||},
\end{eqnarray*}
and consider $\mathcal{U}_\lambda A_{n+1}^{||}$. Then there exists a constant $c$, depending only on $n$, $\Lambda$, such that
\begin{eqnarray*}\label{crucacar+}\int_0^{l(Q)}\int_Q|\mathcal{U}_\lambda  A_{n+1}^{||}|^2\frac {dxdtd\lambda}\lambda\leq c|Q|,
\end{eqnarray*}
for all cubes $Q\subset\mathbb R^{n+1}$.
     \end{lemma}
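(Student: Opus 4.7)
My plan is to reduce the Carleson measure estimate via Carleson's embedding theorem to a genuine square function bound for the operator family $\mathcal{U}_\lambda$, and then invoke the parabolic Kato-type square function estimate developed in \cite{N}. Specifically, by Carleson's embedding theorem, the desired bound
$\int_0^{l(Q)}\int_Q|\mathcal{U}_\lambda A_{n+1}^{||}|^2\,dxdtd\lambda/\lambda\le c|Q|$ for every parabolic cube $Q$ is equivalent to
\[
\iint_{\mathbb R^{n+2}_+}|(\mathcal{U}_\lambda A_{n+1}^{||})\,\P_\lambda f|^2\,\frac{dxdtd\lambda}{\lambda}\le c\|f\|_2^2
\]
for every $f\in L^2(\mathbb R^{n+1},\mathbb C)$, where $\P_\lambda$ is the parabolic approximation of the identity from Section \ref{littleth}. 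This reformulation converts the Carleson testing condition into a square function bound amenable to Littlewood--Paley and commutator techniques.

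Next I would apply Lemma \ref{ilem2--a} to decompose
$(\mathcal{U}_\lambda A_{n+1}^{||})\P_\lambda f=\mathcal{U}_\lambda(A_{n+1}^{||}f)-\mathcal{R}_\lambda f$. For the commutator remainder $\mathcal{R}_\lambda$, Lemma \ref{ilem2--a} provides $\|\mathcal{R}_\lambda f\|_2\le c(\|\lambda\nabla f\|_2+\|\lambda^2\partial_t f\|_2)$, while the uniform $L^2$-bound $\|\mathcal{R}_\lambda f\|_2\le c\|f\|_2$ follows from Lemma \ref{le8-}(ii) (applied to the vector field $A_{n+1}^{||}f$) together with boundedness of $A$ and of $\P_\lambda$. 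Interpolating between these two bounds via the Littlewood--Paley resolution $f=\int_0^\infty \mathcal{Q}_\sigma^2 f\,d\sigma/\sigma$, exactly as in the orthogonality argument used in the proof of Lemma \ref{little3}, then yields
\[
\iint_{\mathbb R^{n+2}_+}|\mathcal{R}_\lambda f|^2\,\frac{dxdtd\lambda}{\lambda}\le c\|f\|_2^2.
\]

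For the main term $\mathcal{U}_\lambda(A_{n+1}^{||}f)=\lambda\mathcal{E}_\lambda\div_{||}(A_{n+1}^{||}f)$, I would invoke the parabolic square function bound
\[
\iint_{\mathbb R^{n+2}_+}|\lambda\mathcal{E}_\lambda\div_{||}{\bf g}|^2\,\frac{dxdtd\lambda}{\lambda}\le c\|{\bf g}\|_2^2
\]
applied to ${\bf g}=A_{n+1}^{||}f\in L^2(\mathbb R^{n+1},\mathbb C^n)$, whose $L^2$-norm satisfies $\|A_{n+1}^{||}f\|_2\le \Lambda\|f\|_2$. Combining the two estimates through the triangle inequality closes the Carleson bound.

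The main obstacle is precisely this last square function estimate, which is the parabolic analog of the Kato square root estimate for $\mathcal{H}_{||}$ and forms the principal technical content of \cite{N}. The upgrade from the uniform-in-$\lambda$ $L^2$-bound of Lemma \ref{le8-}(ii) to a genuine square function estimate is not automatic: it requires the parabolic $Tb$-theorem for square functions adapted to the half-order time derivative framework built on $H_t$, $D_{1/2}^t$, and the Sobolev space $\mathbb H$, together with a stopping-time/accretivity construction producing suitable pseudo-accretive test functions from $\mathcal{E}_\lambda$. Thus Lemma \ref{ilem2--} is best viewed as a direct corollary of the parabolic Kato estimate established in \cite{N}, in the same spirit that Lemma \ref{ilem2--a} is a consequence of Lemma 2.27 there.
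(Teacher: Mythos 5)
The paper does not actually prove this lemma: its ``proof'' is a one-line citation to Lemma 3.1 of \cite{N}, so your proposal is necessarily an independent reconstruction. Your overall strategy is viable and, importantly, non-circular: the genuine square function bound $|||\lambda\mathcal{E}_\lambda\div_{||}{\bf g}|||\leq c||{\bf g}||_2$ for ${\bf g}\in L^2(\mathbb R^{n+1},\mathbb C^n)$ does follow from the material quoted from \cite{N}, namely by combining the parabolic Hodge decomposition of Lemma \ref{parahodge} (note $\div_{||}{\bf g}\in\mathbb H^\ast$ with norm $\leq||{\bf g}||_2$, so one can write $\div_{||}{\bf g}=\mathcal{H}_{||}u$ with $||\mathbb Du||_2\leq c||{\bf g}||_2$) with the estimate \eqref{ea1} of Theorem \ref{thm1}; this is exactly the reduction the paper itself uses to prove Theorem \ref{thm1} $(iii)$, and since \eqref{ea1} concerns only the block $A_{||}$ while $A_{n+1}^{||}$ never enters $\mathcal{H}_{||}$, there is no circularity. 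You should make that Hodge-decomposition step explicit rather than appealing loosely to ``the parabolic Kato estimate''.

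The genuine gap is in your treatment of $\mathcal{R}_\lambda$. The two bounds you have, the uniform bound $\sup_\lambda||\mathcal{R}_\lambda||_{2\to2}\leq c$ and $||\mathcal{R}_\lambda f||_2\leq c(||\lambda\nabla f||_2+||\lambda^2\partial_tf||_2)$ from Lemma \ref{ilem2--a}, only give $||\mathcal{R}_\lambda\mathcal{Q}_\sigma||_{2\to2}\leq c\min\{1,\lambda/\sigma\}$, which decays only when $\lambda\lesssim\sigma$; since $\int_0^\lambda 1\,{d\sigma}/{\sigma}=\infty$, the Schur/Cauchy--Schwarz summation of Lemma \ref{little3} does not close, and the missing decay $||\mathcal{R}_\lambda\mathcal{Q}_\sigma||_{2\to2}\leq c(\sigma/\lambda)^\delta$ cannot be extracted from $\mathcal{R}_\lambda 1=0$ alone, because the kernel of $\mathcal{U}_\lambda(A_{n+1}^{||}\,\cdot)$ has no regularity in $(y,s)$ when $A$ is merely bounded and measurable. (A smaller point: uniform $L^2$-boundedness of $g\mapsto(\mathcal{U}_\lambda A_{n+1}^{||})\P_\lambda g$ needs the local $L^2$ normalization of $\mathcal{U}_\lambda A_{n+1}^{||}$ from Remark \ref{lloc}, not just boundedness of $A$ and $\P_\lambda$, since $\mathcal{U}_\lambda A_{n+1}^{||}$ is not known to be in $L^\infty$.) Fortunately the entire Carleson-embedding/$\P_\lambda$/$\mathcal{R}_\lambda$ detour is unnecessary: once the square function bound for $\mathcal{U}_\lambda$ on $L^2$ is in hand, fix $Q$, split $A_{n+1}^{||}=A_{n+1}^{||}1_{4Q}+A_{n+1}^{||}1_{\mathbb R^{n+1}\setminus 4Q}$, bound the local piece by $c||A_{n+1}^{||}1_{4Q}||_2^2\leq c|Q|$, and bound the far piece on $Q\times(0,l(Q))$ by the off-diagonal estimates for $\mathcal{U}_\lambda$ already invoked in Remark \ref{lloc}. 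With that replacement your argument closes.
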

     \begin{proof}  This is Lemma 3.1 in \cite{N}.\end{proof}

     \begin{remark}\label{lloc} For the details of the proof of Lemma \ref{ilem2--a} and Lemma \ref{ilem2--} we refer to \cite{N}. We here simply note that for   $\lambda$ fixed,
$(\mathcal{U}_\lambda A_{n+1}^{||})$ (and $\R_\lambda 1$) exists as an element in $L^2_{\mbox{loc}}(\mathbb R^{n+1},\mathbb C)$. Indeed, let $Q_R$ be the parabolic cube on $\mathbb R^{n+1}$ with center at $(0,0)$ and with size determined by $R$. Writing
$$\mathcal{U}_\lambda A_{n+1}^{||}=\mathcal{U}_\lambda A_{n+1}^{||}1_{2Q_R}+ \mathcal{U}_\lambda A_{n+1}^{||}1_{\mathbb R^{n+1}\setminus 2Q_R}, $$
and using Lemma \ref{le8-}  we see that
$$||\mathcal{U}_\lambda (A_{n+1}^{||}1_{2Q_R})1_{Q_R}||_2\leq c||A||_\infty R^{(n+2)/2}.$$
Furthermore, by the off-diagonal estimates for $\mathcal{U}_\lambda$ proved in Lemma 2.17 in \cite{N} it follows that also
$$||\mathcal{U}_\lambda (A_{n+1}^{||}1_{\mathbb R^{n+1}\setminus 2Q_R})1_{Q_R}||_2\leq c||A||_\infty R^{(n+2)/2}.$$

     \end{remark}

     \begin{theorem}\label{thm1}  Consider the operators $\mathcal{H}_{||}=\partial_t+\mathcal{L}_{||}=\partial_t-\div_{||} A_{||}\nabla_{||}$, $\mathcal{H}_{||}^\ast=-\partial_t+\mathcal{L}_{||}^\ast=-\partial_t-\div_{||} A_{||}^\ast\nabla_{||}$, and assume that $A$ satisfies \eqref{eq3}, \eqref{eq4}.
Then there exists a constant $c$, $1\leq c<\infty$, depending only on $n$, $\Lambda$, such that
        \begin{eqnarray}\label{ea1}
      |||\lambda \mathcal{E}_\lambda \mathcal{H}_{||}f|||+|||\lambda \mathcal{E}_\lambda^\ast\mathcal{H}_{||}^\ast f|||\leq c||\mathbb D f||_2,
      \end{eqnarray}
      and
      \begin{eqnarray}\label{ea2}
               (i)&&|||\partial_\lambda\mathcal{E}_\lambda f|||+|||\partial_\lambda\mathcal{E}_\lambda^\ast f|||\leq c||\mathbb Df||_2,\notag\\
               (ii)&&|||\lambda\partial_t\mathcal{E}_\lambda f|||+|||\lambda\partial_t\mathcal{E}_\lambda^\ast f|||\leq c||\mathbb Df||_2,\notag\\
       (iii)&&|||\lambda\mathcal{E}_\lambda \mathcal{L}_{||} f|||+|||\lambda\mathcal{E}_\lambda^\ast \mathcal{L}_{||}^\ast f|||\leq c||\mathbb Df||_2,\notag\\
       (iv)&&|||\lambda \mathcal{L}_{||}\mathcal{E}_\lambda  f|||+|||\lambda \mathcal{L}_{||}^\ast\mathcal{E}_\lambda^\ast f|||\leq c||\mathbb Df||_2,
      \end{eqnarray}
      whenever $f\in\mathbb H(\mathbb R^{n+1},\mathbb C)$. \end{theorem}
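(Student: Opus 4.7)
The plan is to reduce the four estimates \eqref{ea2}$(i)$--$(iv)$ to the single square-function bound \eqref{ea1}, and then to prove the latter by a parabolic $Tb$-argument. The key preliminary observation is that, for $f\in\mathbb{H}(\mathbb R^{n+1},\mathbb C)$, the commutation \eqref{cla10} together with the resolvent equation in Definition \ref{de2} produces the algebraic identity
\begin{equation*}
\mathcal{E}_\lambda\mathcal{H}_{||}f=\mathcal{H}_{||}\mathcal{E}_\lambda f=\lambda^{-2}(I-\mathcal{E}_\lambda)f,
\end{equation*}
so that \eqref{ea1} is equivalent to the parabolic Kato-type estimate $|||\lambda^{-1}(I-\mathcal{E}_\lambda)f|||\leq c\|\mathbb D f\|_2$. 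This is the quantitative heart of the theorem, and all the work is concentrated here.

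Granting \eqref{ea1}, the bounds in \eqref{ea2} follow without much further effort. For $(i)$, formula \eqref{cla12} rewrites $\partial_\lambda\mathcal{E}_\lambda f=-2\mathcal{E}_\lambda(\lambda\mathcal{E}_\lambda\mathcal{H}_{||}f)$, so Lemma \ref{le8-}$(i)$ (uniform $L^2$-boundedness of $\mathcal{E}_\lambda$) and integration against $d\lambda/\lambda$ give $(i)$ directly from \eqref{ea1}. The commutation \eqref{cla11} identifies $(iii)$ with $(iv)$, and writing $\mathcal{L}_{||}=\mathcal{H}_{||}-\partial_t$ yields $\lambda\mathcal{E}_\lambda\mathcal{L}_{||}f=\lambda\mathcal{E}_\lambda\mathcal{H}_{||}f-\lambda\mathcal{E}_\lambda\partial_tf$, so $(iii)$, $(iv)$ reduce to \eqref{ea1} together with $(ii)$. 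For $(ii)$, the time-independence of $A$ makes $\partial_t$, $D_{1/2}^t$, and $H_t$ all commute with $\mathcal{E}_\lambda$ (again by \eqref{cla11}), so
\begin{equation*}
\lambda\partial_t\mathcal{E}_\lambda f=H_t\bigl(\lambda\mathcal{E}_\lambda D_{1/2}^t\bigr)(D_{1/2}^tf);
\end{equation*}
since $\|D_{1/2}^tf\|_2\leq c\|\mathbb D f\|_2$ by \eqref{uau} and $H_t$ is $L^2$-bounded, $(ii)$ reduces to the auxiliary resolvent square-function bound $|||\lambda\mathcal{E}_\lambda D_{1/2}^tg|||\leq c\|g\|_2$ for $g\in L^2(\mathbb R^{n+1},\mathbb C)$, which is a standard vertical Littlewood--Paley estimate for the parabolic resolvent and is proved in \cite{N} by the same $Tb$-machinery used for \eqref{ea1}.

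To prove the master bound $|||\lambda^{-1}(I-\mathcal{E}_\lambda)f|||\leq c\|\mathbb D f\|_2$ I would follow a parabolic $Tb$-strategy, adapted from \cite{AHLMcT} as in \cite{N}. Insert the parabolic approximation to the identity $\P_\lambda$ from Section \ref{littleth} and split
\begin{equation*}
\lambda^{-1}(I-\mathcal{E}_\lambda)f=\lambda^{-1}(I-\mathcal{E}_\lambda)(I-\P_\lambda)f+\lambda^{-1}(I-\mathcal{E}_\lambda)\P_\lambda f.
\end{equation*}
The first piece is handled by the uniform $L^2$-boundedness of $\lambda^{-1}(I-\mathcal{E}_\lambda)=\lambda\mathcal{E}_\lambda\mathcal{L}_{||}+\lambda\mathcal{E}_\lambda\partial_t$ (a consequence of Lemma \ref{le8-} for the operators $\lambda\mathcal{E}_\lambda\div_{||}$ and $\lambda\mathcal{E}_\lambda D_{1/2}^t$) combined with the Littlewood--Paley orthogonality estimate $|||\mathbb D(I-\P_\lambda)f|||\leq c\|\mathbb D f\|_2$ supplied by Lemmas \ref{little2} and \ref{little3}. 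The second, smoothed piece is reduced by a Kato-style testing argument against the coordinate functions to the parabolic Carleson measure estimate for $\mathcal{U}_\lambda A_{n+1}^{||}$ (Lemma \ref{ilem2--}) together with the commutator bound of Lemma \ref{ilem2--a}, which together supply the crucial non-trivial input for the $Tb$-paradigm in the parabolic setting.

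The principal obstacle is unquestionably the master estimate, and within it the handling of the first-order time derivative in $\mathcal{H}_{||}$: the parabolic scaling forces one to work with the half-order operator $D_{1/2}^t$, its Hilbert transform $H_t$, and their interactions with $\mathcal{E}_\lambda$, phenomena absent in the elliptic case of \cite{AHLMcT}. Once the parabolic $Tb$ and Carleson-measure apparatus of \cite{N} is in place, Theorem \ref{thm1} as a whole is a formal consequence of the algebraic identities \eqref{cla10}--\eqref{cla12} and the uniform resolvent estimates of Lemma \ref{le8-}.
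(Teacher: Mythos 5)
Your overall architecture---treat \eqref{ea1} as the master estimate (it is Theorem 1.17 in \cite{N}, and the paper likewise imports it from there rather than reproving it) and derive \eqref{ea2} from it via the identities \eqref{cla9}--\eqref{cla12} and Lemma \ref{le8-}---matches the paper, and your treatments of $(i)$ and of the equivalence $(iii)\Leftrightarrow(iv)$ coincide with the paper's. The gap is in how you link $(ii)$ and $(iii)$. You reduce $(iii)$ to $(ii)$, and then rest $(ii)$ on the square function bound $|||\lambda\mathcal{E}_\lambda D_{1/2}^t g|||\le c\|g\|_2$ for arbitrary $g\in L^2(\mathbb R^{n+1},\mathbb C)$, which you describe as a standard vertical Littlewood--Paley estimate. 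It is not: Lemma \ref{le8-} gives only the uniform-in-$\lambda$ bound $\sup_\lambda\|\lambda\mathcal{E}_\lambda D_{1/2}^t g\|_2\le c\|g\|_2$, and integrating a uniform bound against $d\lambda/\lambda$ diverges. For variable $A_{||}$ the estimate you need is a genuine quadratic estimate for the non-self-adjoint operator $\partial_t+\mathcal{L}_{||}$; it can be obtained from the bounded $H^\infty$ calculus of $\partial_t+\mathcal{L}_{||}$ together with both directions of the Kato equivalence \eqref{ea1kkato}, by writing $\lambda\mathcal{E}_\lambda D_{1/2}^t=\psi\bigl(\lambda^2(\partial_t+\mathcal{L}_{||})\bigr)\circ\bigl[(\partial_t+\mathcal{L}_{||})^{-1/2}D_{1/2}^t\bigr]$ with $\psi(z)=z^{1/2}(1+z)^{-1}$, but those facts are themselves presented downstream of Theorem \ref{thm1} in Section \ref{kato}. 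As written, your argument therefore imports an unproved and decidedly non-elementary input at exactly the point where the theorem has content.

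The paper runs the reduction in the opposite direction and avoids this entirely: it proves $(iii)$ directly from \eqref{ea1} by the Hodge-type Lemma \ref{parahodge}. Given $f\in\mathbb H$, set $g=A_{||}\nabla_{||}f$ and solve $\mathcal{H}_{||}u=\div_{||}g$ with $\|u\|_{\mathbb H}\le c\|g\|_2\le c\|\mathbb Df\|_2$; then $\lambda\mathcal{E}_\lambda\mathcal{L}_{||}f=\lambda\mathcal{E}_\lambda\mathcal{H}_{||}u$ and \eqref{ea1} applies. With $(iii)$ in hand, $(ii)$ follows from $\lambda\mathcal{E}_\lambda\partial_tf=\lambda\mathcal{E}_\lambda\mathcal{H}_{||}f-\lambda\mathcal{E}_\lambda\mathcal{L}_{||}f$ and \eqref{ea1}, and $(iv)$ from the commutation \eqref{cla11}. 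To repair your proof, replace your treatment of $(ii)$ by this Hodge-decomposition step; otherwise you must actually establish $|||\lambda\mathcal{E}_\lambda D_{1/2}^t g|||\le c\|g\|_2$, which is comparable in difficulty to the estimates you are trying to derive. Your closing sketch of a $Tb$-proof of \eqref{ea1} itself is broadly in the spirit of \cite{N} but is only a sketch; since the paper simply cites \cite{N} for \eqref{ea1}, that part is neither required nor, at the level of detail given, a proof.
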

      \begin{proof}  \eqref{ea1} is Theorem 1.17 in \cite{N}, \eqref{ea2} $(i)-(iv)$ is Corollary 1.18 in \cite{N}. However, as the proof of Corollary 1.18 in \cite{N} is presented in a slightly formal manner we here include the proof of the inequalities in \eqref{ea2} clarifying details. We only supply the proof in the case of $ \mathcal{H}_{||}$. To prove $(i)$ we note that $\partial_\lambda\mathcal{E}_\lambda f$ is defined as in \eqref{cla8+} and that we have, using \eqref{cla12}, $\partial_\lambda\mathcal{E}_\lambda f=-2\lambda\mathcal{E}_\lambda^2 \mathcal{H}_{||} f$
    in the sense of \eqref{cla8}. Hence $(i)$ follows from \eqref{ea1}. To prove $(ii)$ we note that $\partial_t$ and $\mathcal{E}_\lambda$ commute in the sense discussed above, see \eqref{cla11}, and that
               \begin{eqnarray*}\label{ea1}
 \lambda\mathcal{E}_\lambda \partial_t f=\lambda\mathcal{E}_\lambda  \mathcal{H}_{||} f-\lambda\mathcal{E}_\lambda \mathcal{L}_{||}f. \end{eqnarray*}
              Hence, using \eqref{ea1} we see that
                          \begin{eqnarray*}\label{ea1}
         |||\lambda\partial_t\mathcal{E}_\lambda f|||\leq c||\mathbb Df||_2+|||\lambda\mathcal{E}_\lambda \mathcal{L}_{||}f|||.
      \end{eqnarray*}
      Therefore, to prove $(ii)$ it suffices to prove $(iii)$. To prove $(iii)$, we let $ f\in \mathbb H(\mathbb R^{n+1},\mathbb C)$ and put $g=A_{||}\nabla_{||} f$. Using Lemma \ref{parahodge}  we then see that there exists a weak solution $u$ to the equation
     \begin{eqnarray}\label{ea1uua}\mbox{$\div_{||}(g)= \mathcal{H}_{||}u$ such that $||u||_{\mathbb H}\leq c||g||_2$}.
      \end{eqnarray}
In particular,
           \begin{eqnarray}\label{ea1}
           \lambda\mathcal{E}_\lambda \mathcal{L}_{||}f= \lambda \mathcal{E}_\lambda  \mathcal{H}_{||}u.
      \end{eqnarray}
      Hence, again using Theorem \ref{thm1} we see that
                    \begin{eqnarray}\label{ea1uub}
        |||\lambda\mathcal{E}_\lambda \mathcal{L}_{||}f|||\leq c||\mathbb D u||_2.
      \end{eqnarray}
      $(iii)$ now follows by combining \eqref{ea1uua} and \eqref{ea1uub}. To prove $(iv)$ we simply note that $\mathcal{L}$ and $\mathcal{E}_\lambda$ commute in the sense of \eqref{cla11}, and hence $(iv)$ follows from the argument in $(iii)$. This completes the proof of \eqref{ea2} $(i)-(iv)$.
       \end{proof}

       \subsection{Remark on the Kato problem for parabolic equations}
       \label{kato}
       In Section 5 in \cite{N} implications of  two of the results proved in \cite{N}, Theorem 1.17 and Theorem 1.19 in \cite{N}, for  Kato square root problems related to  the operator $\partial_t+\mathcal{L}_{||}$ (in \cite{N} this operator is denoted $\partial_t+\mathcal{L}$), as well as generalization of the generalizations of these results to  operators $\partial_t-\div A(x,t)\nabla $, i.e., to operators with time-dependent coefficients, are discussed. The discussion in the section is essentially flawless but author neglects to properly state that the  Kato square root problem for the operator $\partial_t+\mathcal{L}_{||}$ is in fact solved in \cite{N}. Indeed, the core of the approach in \cite{N} is the observation that
       $\partial_t+\mathcal{L}_{||}$ can be realized as an operator $\mathbb H\to \mathbb H^*$ via the sesquilinear form $B(u,\psi)$ introduced in \eqref{form2-}:
\begin{eqnarray*}
 \langle (\partial_t+\mathcal{L}_{||}) u, \psi \rangle := B(u,\psi),\ u,\psi \in \mathbb H.
\end{eqnarray*}
By the arguments in \cite{N} it follows, see also Lemma \ref{parahodge+} above, that if $\theta \in \mathbb C$ with $ \mbox{Re }\theta > 0$, then $$\theta + \partial_t+\mathcal{L}_{||}: \mathcal{D}(\partial_t+\mathcal{L}_{||}) \to L^2(\mathbb R^{n+1},\mathbb C)$$ is bijective and the resolvent satisfies the estimate $$\|(\theta + (\partial_t+\mathcal{L}_{||}))^{-1} f\|_2 \leq \frac 1{\mbox{Re } \theta }\|f\|_2.$$ In particular,  $\partial_t+\mathcal{L}_{||}$, with maximal domain $\mathcal{D}(\partial_t+\mathcal{L}_{||}) = \{u \in \mathbb H : (\partial_t+\mathcal{L}_{||}) u \in L^2(\mathbb R^{n+1},\mathbb C) \}$ in $L^2(\mathbb R^{n+1},\mathbb C)$, is maximal accretive and, see also the discussion in Section 5 in \cite{N},
$\partial_t+\mathcal{L}_{||}$ is sectorial of angle $<{\pi}/{2}$ and there is a square root $\sqrt{\partial_t+\mathcal{L}_{||}}$ abstractly defined by functional calculus. Furthermore, $\partial_t+\mathcal{L}_{||}$ has a bounded $H^\infty$ calculus. This is an other way of formulating the discussion in Section 5 in \cite{N} up to display (5.4) in \cite{N}. Furthermore, the inequality \begin{eqnarray}\label{est1apafin+}
       \quad||\sqrt{\partial_t+\mathcal{L}_{||}}f||_2^2\leq c\int_0^\infty \int_{\mathbb R^{n+1}}|(I+\lambda^2(\partial_t+\mathcal{L}_{||}))^{-1}\lambda (\partial_t+\mathcal{L}_{||})f|^2\, \frac {dxdtd\lambda}\lambda,
       \end{eqnarray}
       does hold for all  $f\in  C_0^\infty(\mathbb R^{n+1},\mathbb{C})$. In particular, the inequality in display (5.5) in \cite{N} is valid and this was the only point left open in \cite{N}. Based on this  we can conclude, using the main result proved in \cite{N},  that there exists a constant $c$, $1\leq c<\infty$, depending only on $n$, $\Lambda$, such that
        \begin{eqnarray}\label{ea1kkato}
      c^{-1}||\mathbb D f||_2\leq ||\sqrt{\partial_t+\mathcal{L}_{||}}f||_2\leq c||\mathbb D f||_2,
      \end{eqnarray}
      whenever $f\in\mathbb H$.

\section{Estimates in parabolic Sobolev spaces}\label{sec6}
Throughout this section we assume that $\mathcal{H}$, $\mathcal{H}^\ast$ satisfy \eqref{eq3}-\eqref{eq4} as well as \eqref{eq14+}-\eqref{eq14++}.
Using the estimates established and stated in Section \ref{sec4} and Section \ref{sec5} we in this section prove the following three lemmas.

\begin{lemma}\label{lemsl1c} Let $\Phi(f)$ be defined as in \eqref{keyestint-ex+}. Assume that $\Phi(f)<\infty$ whenever $f\in L^2(\mathbb R^{n+1},\mathbb C)$.  Then there exists a constant $c$, depending at most
     on $n$, $\Lambda$, and the De Giorgi-Moser-Nash constants, such that
\begin{eqnarray*}
||\nabla_{||} \mathcal{S}_{\lambda _0}f||_{2}
	\ \leq \  c(\Phi(f)+||f||_2+||N_{\ast\ast}(\partial_\lambda \mathcal{S}_{\lambda}f)||_2),
\end{eqnarray*}
whenever $f\in L^2(\mathbb R^{n+1},\mathbb C)$,  $\lambda _0>0$.
\end{lemma}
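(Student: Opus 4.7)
By density I reduce to $f\in C_0^\infty(\mathbb R^{n+1},\mathbb C)$, so that Lemma \ref{appf} gives $u:=\mathcal S_{\lambda_0}f\in\mathbb H\cap L^2$. Setting $w_\lambda:=\partial_\lambda\mathcal S_\lambda f$, the hypothesis $\Phi(f)<\infty$ yields the uniform pointwise bound $\|w_\lambda\|_2\leq \Phi(f)$ and the square function bound $|||\lambda\partial_\lambda w_\lambda|||\leq \Phi(f)$. The energy estimates Lemma \ref{le1--} and Lemma \ref{le1} applied to both $u$ and $\partial_\lambda u$ guarantee the regularity hypothesis \eqref{block++apa} in a neighborhood of $\lambda_0$, so \eqref{weak+} yields the slice identity $B(u,\psi)=\langle F_{\lambda_0},\psi\rangle$ for $\psi\in \mathbb H$, where
\[
\langle F_{\lambda_0},\psi\rangle := \sum_{j=1}^{n+1}\int A_{n+1,j}\,\partial_{x_j}w_{\lambda_0}\,\bar\psi\,dxdt -\sum_{i=1}^{n}\int A_{i,n+1}\,w_{\lambda_0}\,\partial_{x_i}\bar\psi\,dxdt.
\]

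Following the Lax--Milgram argument behind Lemma \ref{parahodge}, I pick $\delta=\delta(n,\Lambda)\in(0,1)$ so small that $B_\delta$ is coercive on $\mathbb H\times\mathbb H$ and test with $\psi=(I+\delta H_t)u$, which together with \eqref{uau}$(i)$ gives
\[
\|\nabla_{||}\mathcal S_{\lambda_0}f\|_2^2 \leq c\|u\|_{\mathbb H}^2 \leq c\,|\langle F_{\lambda_0},(I+\delta H_t)u\rangle|.
\]
It suffices to bound the right-hand side by $c\,(\Phi(f)+\|f\|_2+\|N_{\ast\ast}(w_\lambda)\|_2)\,\|u\|_{\mathbb H}$ and then absorb $\|u\|_{\mathbb H}$. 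The $i$-sum is immediate from Cauchy--Schwarz and $\|w_{\lambda_0}\|_2\leq\Phi(f)$, giving $c\,\Phi(f)\,\|u\|_{\mathbb H}$.

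The $j$-sum is the main obstacle because the derivative sits on $w_{\lambda_0}$ and cannot be transferred to the test function without differentiating $A$. To handle the terms with $1\leq j\leq n$ I insert a resolvent on the test side via $(I+\delta H_t)u=\mathcal E_{\lambda_0}(I+\delta H_t)u+\lambda_0^2\mathcal E_{\lambda_0}\mathcal H_{||}(I+\delta H_t)u$ and use the operator $\mathcal U_\lambda=\lambda\mathcal E_\lambda\div_{||}$ together with the commutator Lemma \ref{ilem2--a}: this produces a main divergence-form piece estimated by $c\,\|w_{\lambda_0}\|_2\,\|u\|_{\mathbb H}\leq c\,\Phi(f)\,\|u\|_{\mathbb H}$, plus a commutator remainder bounded by $\|\lambda_0\nabla w_{\lambda_0}\|_2+\|\lambda_0^2\partial_t w_{\lambda_0}\|_2$, which is itself $\leq c\,\Phi(f)$ by Caccioppoli (Lemma \ref{le1}), Lemma \ref{le1a}, and the resolvent bounds of Lemma \ref{le8-}.

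The hardest step will be the $j=n+1$ contribution $\int a\,\partial_\lambda^2u|_{\lambda_0}\,\overline{(I+\delta H_t)u}\,dxdt$, since $\partial_\lambda^2u|_{\lambda_0}$ carries only square-function control and no uniform pointwise-in-$\lambda$ $L^2$ control. My plan is a weighted fundamental theorem of calculus in $\lambda$ over $(\lambda_0,\infty)$, integrating by parts with weight $\lambda$ so that the boundary value $\partial_\lambda^2u|_{\lambda_0}$ is traded for a $\lambda$-weighted integral of $\partial_\lambda^3 u=\partial_\lambda w\cdot\partial_\lambda$, whose triple-norm is controlled by Lemma \ref{lemsl1} with $m=0,\,l=-1$, giving $c(\Phi(f)+\|f\|_2)$; the $\|f\|_2$ piece also absorbs contributions from the $\lambda\to 0$ adjustment together with Lemma \ref{le5}, and the decay at $\lambda\to\infty$ needed to justify the integration by parts holds for $f\in C_0^\infty$ by the Gaussian estimates of Lemma \ref{le2+}. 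Finally $\|N_{\ast\ast}(w_\lambda)\|_2$ enters through the cone-covering argument when one replaces a pointwise-in-$\lambda$ test function evaluation by a non-tangential average, so that the test function appears only through its $\mathbb H$-norm. Combining these estimates and absorbing $\|u\|_{\mathbb H}$ on the left completes the proof.
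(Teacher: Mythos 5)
Your overall framework (reduce to a bilinear pairing $B(u,\cdot)$ via coercivity of $B_\delta$; the $i$-sum is harmless; the $j$-sum is the problem) is a legitimate dual formulation of what the paper does, and your use of the slice identity \eqref{weak+} is consistent with how the paper itself invokes it. But the two steps you identify as the hard ones do not close, and for structural reasons. First, for $1\leq j\leq n$: after the resolvent insertion and Lemma \ref{ilem2--a}, the ``main piece'' is governed by the multiplication operator $(\mathcal{U}_{\lambda_0}A_{n+1}^{||})\P_{\lambda_0}$ \emph{at the single scale} $\lambda_0$. The only control the theory provides on $\mathcal{U}_\lambda A_{n+1}^{||}$ is the Carleson measure estimate of Lemma \ref{ilem2--}, i.e.\ a bound on $\int_0^{l(Q)}\int_Q|\mathcal{U}_\lambda A_{n+1}^{||}|^2\,\frac{dxdtd\lambda}{\lambda}$; for a fixed $\lambda_0$ one only knows $\mathcal{U}_{\lambda_0}A_{n+1}^{||}\in L^2_{loc}$ with a bound growing like $R^{(n+2)/2}$ (Remark \ref{lloc}), which is useless here. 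A Carleson measure can only be exploited by integrating in $\lambda$ against a non-tangentially controlled family — this is exactly where $\|N_{\ast\ast}(\partial_\lambda\mathcal{S}_\lambda f)\|_2$ enters in the paper, via $\tilde J_2\leq c\|N_\ast(\P_\lambda(\partial_\lambda\mathcal{S}_\lambda f))\|_2^2$ — and your fixed-$\lambda_0$ pairing offers no such integral. Your closing remark about a ``cone-covering argument'' does not supply the missing mechanism.

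Second, the $j=n+1$ term. The fundamental theorem of calculus in $\lambda$ gives at best $\|\partial_\lambda^2\mathcal{S}_{\lambda_0}f\|_2\leq c\lambda_0^{-1}|||\lambda^2\partial_\lambda^3\mathcal{S}_\lambda f|||\leq c\lambda_0^{-1}(\Phi(f)+\|f\|_2)$, and pairing this against $(I+\delta H_t)u$ costs $\|u\|_2$, which is not controlled by the homogeneous norm $\|u\|_{\mathbb H}$; neither the divergent factor $\lambda_0^{-1}$ nor the $L^2$-norm of the test function can be absorbed. Writing the boundary value as $-\int_{\lambda_0}^\infty\partial_\lambda^3 u\,d\lambda$ does not help because your test function is independent of $\lambda$: there is no second square function to pair against. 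This is precisely why the paper mollifies \emph{both} sides by $\lambda$-dependent resolvents, writing $I_1=-\int_0^\infty\partial_\lambda\bigl(\int A_{||}\nabla_{||}\mathcal{E}_\lambda\mathcal{S}_{\lambda+\lambda_0}f\cdot\overline{\nabla_{||}\mathcal{E}_\lambda^\ast v}\bigr)d\lambda$, so that when $\partial_\lambda$ falls on the test side it produces $\partial_\lambda\mathcal{E}_\lambda^\ast v$, whose square function is bounded by $\|v\|_{\mathbb H}$ via Theorem \ref{thm1}. Any correct proof along your lines would have to reintroduce this two-sided $\lambda$-integration, at which point it becomes the paper's argument; as written, the proposal has a genuine gap at both of its critical steps.
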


\begin{lemma}\label{lemsl1+} Let $\Phi(f)$ be defined as in \eqref{keyestint-ex+}. Assume that $\Phi(f)<\infty$ whenever $f\in L^2(\mathbb R^{n+1},\mathbb C)$.   Then there exists a constant $c$, depending at most
     on $n$, $\Lambda$, and the De Giorgi-Moser-Nash constants, such that
\begin{eqnarray*}
||\mathbb D_{n+1}\mathcal{S}_{\lambda _0}f||_{2}^2
	\ \leq \  c(\Phi(f)+||f||_2),
\end{eqnarray*}
whenever $f\in L^2(\mathbb R^{n+1},\mathbb C)$,  $\lambda _0> 0$.
\end{lemma}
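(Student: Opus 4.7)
Plan: Set $g := \mathcal{S}_{\lambda_0} f$. From \eqref{uau}$(ii)$ we have $\|\mathbb D_{n+1} g\|_2 \leq c \|D_{1/2}^t g\|_2$, so it is enough to bound $\|D_{1/2}^t g\|_2^2$. Plancherel, together with the identity $|\tau| = -i\tau\cdot i\,\mathrm{sgn}(\tau)$ on the Fourier side, gives
\begin{equation*}
\|D_{1/2}^t g\|_2^2 \;=\; \langle \partial_t g,\, H_t g\rangle,
\end{equation*}
a duality pairing $\mathbb H^{\ast}\times \mathbb H$ that makes sense because Lemma \ref{appf} places $g$ in $\mathbb H\cap L^2$. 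To estimate this pairing I would substitute the weak block identity satisfied by $g$ on the cross-section $\lambda = \lambda_0$ (a reorganization of \eqref{weak+}),
\begin{equation*}
\partial_t g \;=\; \operatorname{div}_{||}\!\bigl(A_{||}\nabla_{||} g + \mathbf{b}\,\partial_\lambda g\bigr) + \mathbf{c}\cdot \nabla_{||}\partial_\lambda g + A_{n+1,n+1}\,\partial_\lambda^2 g,
\end{equation*}
where $\mathbf{b} := (A_{i,n+1})_{i=1}^n$ and $\mathbf{c} := (A_{n+1,j})_{j=1}^n$, and then integrate by parts in $x$ on the $\operatorname{div}_{||}$-piece, which is legal because $H_t$ commutes with $\nabla_{||}$. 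The result is that $\|D_{1/2}^t g\|_2^2$ splits into four pairings.

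Two of the pairings are bounded immediately by $c\|\nabla_{||} g\|_2(\|\nabla_{||} g\|_2 + \|\partial_\lambda g\|_2)$ via the $L^2$-boundedness of $H_t$, with $\|\partial_\lambda g\|_2 \leq \Phi(f)$. The other two --- the ones pairing $H_t g$ against $\mathbf{c}\cdot\nabla_{||}\partial_\lambda g$ and against $A_{n+1,n+1}\,\partial_\lambda^2 g$ --- are the substantive ones: a direct Cauchy--Schwarz on either of them would introduce a $\|g\|_2 = \|\mathcal{S}_{\lambda_0} f\|_2$ factor that is not uniformly controllable in $\lambda_0$. My plan to sidestep this is to avoid testing against $g$ itself and use the fundamental-theorem representation
\begin{equation*}
g \;=\; -\int_{\lambda_0}^{\infty}\partial_\lambda \mathcal{S}_\lambda f\, d\lambda,
\end{equation*}
justified by $\mathcal{S}_\lambda f\to 0$ as $\lambda\to\infty$ (which follows from the Gaussian bounds in Lemma \ref{le2+}). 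I would then swap the $\lambda$-integration with the $(x,t)$-pairing by Fubini and integrate by parts once in $\lambda$ to shift $\partial_\lambda$ onto the $\mathcal{S}_\lambda f$-integrand; the two problematic pairings thereby convert to Carleson-type double integrals which Cauchy--Schwarz in $(x,t,\lambda)$-space controls by products of the square function norms $|||\lambda\nabla\partial_\lambda \mathcal{S}_\lambda f|||$, $|||\lambda\partial_\lambda^2\mathcal{S}_\lambda f|||$, and $|||\lambda\partial_t\mathcal{S}_\lambda f|||$, each dominated by $c(\Phi(f)+\|f\|_2)$ through Lemma \ref{lemsl1} and the definition of $\Phi$.

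The residual $\|\nabla_{||} g\|_2^2$ contribution is absorbed via Young's inequality in the final bootstrap that assembles Lemma \ref{lemsl1c}, the present Lemma \ref{lemsl1+}, and Lemma \ref{lemsl1+k} into \eqref{keyestint-ex+eed} in Section \ref{sec6}. The main obstacle is the avoidance of the raw $\|g\|_2$ factor, which the $\lambda$-integration argument above handles by trading it for square-function pairings controlled by $\Phi(f)+\|f\|_2$. A secondary complication is that $\mathbf{c}\cdot\nabla_{||}$ is not in divergence form in $x$ (one cannot legally move $\nabla_{||}$ onto the other factor since the coefficients $\mathbf{c}$ are only bounded measurable), forcing the use of the straight pairing $|\langle \mathbf{c}\cdot\nabla_{||}\partial_\lambda g,v\rangle| \leq \|\mathbf{c}\|_\infty\|\nabla_{||}\partial_\lambda g\|_2\|v\|_2$, with the resulting $\|\nabla_{||}\partial_\lambda g\|_2$ factor absorbed by the same $\lambda$-integration trick.
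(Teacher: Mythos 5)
Your opening reduction already points the argument in the wrong direction: \eqref{uau}$(ii)$ gives $\|\mathbb D_{n+1}g\|_2\leq c\|D_{1/2}^t g\|_2$, but the converse fails (the symbol $\tau/\|(\xi,\tau)\|$ is much smaller than $|\tau|^{1/2}$ when $|\xi|^2\gg |\tau|$), so you are replacing the quantity to be bounded by a strictly larger one --- indeed, in the paper the bound for $D_{1/2}^t\mathcal{S}_{\lambda_0}f$ is only recovered \emph{from} the present lemma together with Lemma \ref{lemsl1c}, via Lemma \ref{lemsl1+k}. Even granting that, your Rellich-type computation produces the term $c\|\nabla_{||}\mathcal{S}_{\lambda_0}f\|_2^2$ with a constant of size $\Lambda$; this is not present in the conclusion of the lemma and cannot be ``absorbed by Young's inequality'' --- it can only be handled by invoking Lemma \ref{lemsl1c}, whose right-hand side carries the additional term $\|N_{\ast\ast}(\partial_\lambda\mathcal{S}_\lambda f)\|_2$, so at best you prove a weaker statement. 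The more serious gap is in the two ``substantive'' pairings. After writing $H_t\mathcal{S}_{\lambda_0}f=-\int_{\lambda_0}^\infty H_t\partial_\sigma\mathcal{S}_\sigma f\,d\sigma$ (or after one integration by parts in $\sigma$), Cauchy--Schwarz in $(x,t,\sigma)$ inevitably leaves one slot occupied by a \emph{bare} $\partial_\sigma\mathcal{S}_\sigma f$ with no gain in $\sigma$, i.e.\ a factor $\bigl(\int_{\lambda_0}^\infty\|\partial_\sigma\mathcal{S}_\sigma f\|_2^2\,\frac{d\sigma}{\sigma}\bigr)^{1/2}$. The hypotheses only give $\sup_\sigma\|\partial_\sigma\mathcal{S}_\sigma f\|_2\leq\Phi(f)$, and $\int_{\lambda_0}^\infty d\sigma/\sigma=\infty$; the decay of $\|\partial_\sigma\mathcal{S}_\sigma f\|_2$ for large $\sigma$ depends on the support of $f$ and destroys uniformity of the constant. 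A square function can appear in \emph{both} slots only if the test slot carries a Littlewood--Paley/quasi-orthogonality structure, and that is precisely the device your plan omits.

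The paper's proof does not use the block equation \eqref{weak+} here at all. It dualizes $\mathbb D_{n+1}\mathcal{S}_{\lambda_0}f$ against $\P_\lambda\bar g$ for an approximate identity $\P_\lambda$, writes the pairing as $\int_0^\infty\partial_\lambda(\cdots)\,d\lambda$, and exploits two algebraic facts: $\partial_\lambda\P_\lambda=\mathbb D\mathcal{Q}_\lambda$ with $\mathcal{Q}_\lambda$ an approximation of the zero operator, and $\mathbb D_{n+1}=i\mathbb D^{-1}\partial_t$. Thus whenever a $\lambda$-derivative falls on the test slot, the operators $\mathbb D^{-1}$ and $\mathbb D$ cancel and the nonlocal $\mathbb D_{n+1}$ becomes the local operator $\partial_t$ acting on $\mathcal{S}_{\lambda+\lambda_0}f$ (controlled by Lemma \ref{lemsl1}), while the test slot retains $\mathcal{Q}_\lambda\bar g$ with its square-function bound \eqref{li}; the remaining term is handled by one more integration by parts in $\lambda$ and Lemma \ref{little2}. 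To salvage your approach you would have to insert $\P_\lambda$ (or the resolvents $\mathcal{E}_\lambda^\ast$) into the test slot before applying the fundamental theorem of calculus, which essentially reproduces the paper's proofs of Lemma \ref{lemsl1c} and Lemma \ref{lemsl1+}.
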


\begin{lemma}\label{lemsl1+k}  There exists a constant $c$, depending at most
     on $n$, such that
\begin{eqnarray*}
||H_tD_{1/2}^t\mathcal{S}_{\lambda _0}f||_{2}
	\ \leq \  c(||\mathbb D_{n+1}\mathcal{S}_{\lambda _0}f||_{2}+||\nabla_{||} \mathcal{S}_{\lambda _0}f||_{2}),
\end{eqnarray*}
 whenever $f\in L^2(\mathbb R^{n+1},\mathbb C)$,  $\lambda _0> 0$.
\end{lemma}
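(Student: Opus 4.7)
The lemma is a purely harmonic-analytic statement about the symbols of the operators $\mathbb D$, $\mathbb D_{n+1}$, $\nabla_{||}$, and $H_tD_{1/2}^t$, independent of the operator $\mathcal{H}$ altogether. The plan is to exploit an algebraic identity hidden in the very definition of the parabolic norm $\|(\xi,\tau)\|$. Multiplying the defining equation
$$\frac{\tau^{2}}{\|(\xi,\tau)\|^{4}}+\frac{|\xi|^{2}}{\|(\xi,\tau)\|^{2}}=1$$
through by $\|(\xi,\tau)\|^{2}$ yields the clean pointwise relation
$$\|(\xi,\tau)\|^{2}\;=\;|\xi|^{2}+\frac{\tau^{2}}{\|(\xi,\tau)\|^{2}},$$
which, when read on the Fourier side, is precisely $|\widehat{\mathbb D g}|^{2}=|\widehat{\nabla_{||}g}|^{2}+|\widehat{\mathbb D_{n+1}g}|^{2}$ pointwise in $(\xi,\tau)$.

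Integrating this pointwise identity and invoking Plancherel's theorem gives the exact $L^{2}$ identity
$$\|\mathbb D g\|_{2}^{2}\;=\;\|\nabla_{||}g\|_{2}^{2}+\|\mathbb D_{n+1}g\|_{2}^{2},$$
valid for any $g$ in a suitable class of test functions (for instance $g\in\mathbb H\cap L^{2}$). Combining this with the bound $\|H_tD_{1/2}^tg\|_{2}\leq c\|\mathbb D g\|_{2}$ already recorded in \eqref{uau}(i), with $c$ depending only on $n$, one immediately obtains
$$\|H_tD_{1/2}^tg\|_{2}\;\leq\;c\|\mathbb D g\|_{2}\;\leq\;c\bigl(\|\nabla_{||}g\|_{2}+\|\mathbb D_{n+1}g\|_{2}\bigr).$$

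To close out the argument I would apply this inequality with $g=\mathcal{S}_{\lambda_{0}}f$. For $f\in C_{0}^{\infty}(\mathbb R^{n+1},\mathbb C)$, Lemma \ref{appf} guarantees that $\mathcal{S}_{\lambda_{0}}f\in\mathbb H\cap L^{2}$, so Plancherel's theorem applies in the strong sense needed above; the general case $f\in L^{2}$ then follows by a routine density argument, or by simply noting that the inequality is vacuous whenever its right-hand side is infinite. The main (and really only) obstacle here is the algebraic recognition of the identity $\rho^{2}=|\xi|^{2}+\tau^{2}/\rho^{2}$; no off-diagonal, energy, or square-function estimate is needed for this particular lemma, which is in stark contrast to Lemma \ref{lemsl1c} and Lemma \ref{lemsl1+}.
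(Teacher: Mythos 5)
Your proof is correct, and it reaches the conclusion by a genuinely different, more elementary multiplier argument than the paper's. The paper splits the symbol $|\tau|^{1/2}$ with a smooth frequency cutoff $\phi(\tau/\|(\xi,\tau)\|^{2})$ into a piece living where $|\tau|\gtrsim\|(\xi,\tau)\|^{2}$ and a piece living where $|\xi|^{2}\gtrsim\|(\xi,\tau)\|^{2}$, and factors each piece as a bounded multiplier times the symbol of $\mathbb D_{n+1}$ or of $\partial_{x_j}$; this produces the operator identity $D_{1/2}^{t}=L_{1}\ast\mathbb D_{n+1}+c\sum_{j}L_{2,j}\ast\partial_{x_j}$ with $L_{1}$, $L_{2,j}$ bounded on $L^{2}$ once the parameter $K$ is chosen large. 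You instead read the defining equation of the parabolic norm as the exact pointwise identity $\|(\xi,\tau)\|^{2}=|\xi|^{2}+\tau^{2}/\|(\xi,\tau)\|^{2}$, deduce by Plancherel the identity $\|\mathbb D g\|_{2}^{2}=\|\nabla_{||}g\|_{2}^{2}+\|\mathbb D_{n+1}g\|_{2}^{2}$, and combine it with $\|H_tD_{1/2}^tg\|_2\le c\|\mathbb D g\|_2$, which is one half of \eqref{uau} $(i)$ and follows from $|\tau|\le\|(\xi,\tau)\|^{2}$. Your route is shorter, dispenses with the cutoff and the degree of freedom $K$, and yields the inequality with an explicit absolute constant; what the paper's version buys is an operator-level decomposition of $D_{1/2}^{t}$ through bounded convolution operators, which is more robust (for instance it would survive in an $L^{p}$ setting under Mikhlin-type bounds, whereas the quadratic Plancherel identity is strictly an $L^{2}$ device). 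The justification of applying these Fourier identities to $g=\mathcal{S}_{\lambda_{0}}f$ via Lemma \ref{appf} and density is the same in both arguments, and your handling of it is adequate.
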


The proofs of Lemma \ref{lemsl1c}-Lemma \ref{lemsl1+k} are given below.

\subsection{Proof of Lemma \ref{lemsl1c}} Throughout the proof we can, without loss of generality, assume that
$f\in C_0^\infty(\mathbb R^{n+1},\mathbb C)$.  Let $\lambda _0>0$ be fixed. To prove the lemma it suffices to estimate
 \begin{eqnarray*}
I:=\int_{\mathbb R^{n+1}}\bar {\bf g}\cdot\nabla_{||} {\mathcal{S}_{\lambda_0}f}\, dxdt,
\end{eqnarray*}
where ${\bf g}\in C_0^\infty(\mathbb R^{n+1},\mathbb C^n)$ and $||{\bf  g}||_2=1$. Given $f\in C_0^\infty(\mathbb R^{n+1},\mathbb C)$, we note, see Lemma \ref{appf}, that $\mathcal{S}_{\lambda_0}f\in {\mathbb H}(\mathbb R^{n+1},\mathbb C)\cap L^2(\mathbb R^{n+1},\mathbb C)$. Hence, using Lemma \ref{parahodge},
 \begin{eqnarray*}
I&=&\int_{\mathbb R^{n+1}}A_{||}\nabla_{||} \mathcal{S}_{\lambda_0}f\cdot\overline{\nabla_{||}v}\, dxdt+\int_{\mathbb R^{n+1}}H_tD_{1/2}^t(\mathcal{S}_{\lambda_0}f)\overline{D_{1/2}^t(v)}\, dxdt,
\end{eqnarray*}
for a function $v\in\mathbb H=\mathbb H(\mathbb R^{n+1},\mathbb C)$ which satisfies
 \begin{eqnarray*}
 ||v||_{\mathbb H}\leq c||{\bf g}||_2,
 \end{eqnarray*}
for some constant $c$ depending only on $n$ and $\Lambda$. Let
 \begin{eqnarray*}
I_1&:=&\int_{\mathbb R^{n+1}}A_{||}\nabla_{||} \mathcal{S}_{\lambda_0}f\cdot\overline{\nabla_{||}v}\, dxdt,\notag\\
I_2&:=&\int_{\mathbb R^{n+1}}H_tD_{1/2}^t(\mathcal{S}_{\lambda_0}f)\overline{D_{1/2}^t(v)}\, dxdt.
\end{eqnarray*}
As $C_0^\infty(\mathbb R^{n+1},\mathbb C)$
is dense in $\mathbb{H}(\mathbb{R}^{n+1},\mathbb{C})$ we can in the following also assume, without loss of generality,
  that $v \in C_0^\infty(\mathbb{R}^{n+1},\mathbb{C})$. This reduction allows us to handle several boundary terms which appear when we integrate
by parts.

We first estimate $I_1$. Recall the resolvents, $\mathcal{E}_\lambda=(I+\lambda^2\mathcal{H}_{||})^{-1}$ and $\mathcal{E}_\lambda^\ast=(I+\lambda^2\mathcal{H}_{||}^\ast)^{-1}$,  introduced in Section \ref{sec5}. To start the estimate of $I_1$ we first note, applying Lemma \ref{le8-}, that
\begin{eqnarray}\label{pint1-}
&&\biggl |\int_{\mathbb R^{n+1}}A_{||}\nabla_{||} \mathcal{E}_\lambda \mathcal{S}_{\lambda+\lambda_0}f\cdot\overline{\nabla_{||}\mathcal{E}_\lambda^\ast v}\, dxdt\biggr |\leq \frac c{\lambda^2}||\mathcal{S}_{\lambda+\lambda_0}f||_2||v||_2.
\end{eqnarray}
Hence, using that
\begin{eqnarray}\label{pint1a}
	\mathcal{S}_{\lambda+\lambda_0}f-\mathcal{S}_{\lambda_0}f
		=\int_{\lambda_0}^{\lambda + \lambda_0}\partial_\sigma\mathcal{S}_{\sigma}f\, d\sigma,
\end{eqnarray}
the fact that $\Phi(f)<\infty$, Lemma \ref{appf} and that $f, v\in  C_0^\infty(\mathbb R^{n+1},\mathbb C)$, we can use
\eqref{pint1-} to conclude that
\begin{eqnarray}\label{pint1}
&&\biggl |\int_{\mathbb R^{n+1}}A_{||}\nabla_{||} \mathcal{E}_\lambda \mathcal{S}_{\lambda+\lambda_0}f\cdot\overline{\nabla_{||}\mathcal{E}_\lambda^\ast v}\, dxdt\biggr | \longrightarrow 0 \quad \mbox{ as $\lambda\to \infty$}.
\end{eqnarray}
Hence,
\begin{eqnarray}\label{pint2}
I_1&=&-\int_{0}^\infty\partial_\lambda \biggl (\int_{\mathbb R^{n+1}}A_{||}\nabla_{||} \mathcal{E}_\lambda \mathcal{S}_{\lambda+\lambda_0}f\cdot\overline{\nabla_{||}\mathcal{E}_\lambda^\ast v}\, dxdt\biggr )d\lambda.
\end{eqnarray}
Consider $\lambda>0$, $\lambda_0>0$ fixed, let $|h|\ll \min\{\lambda_0,\lambda\}$. Then
\begin{eqnarray}\label{pint2rig}
 &&\int_{\mathbb R^{n+1}}A_{||}\nabla_{||} \mathcal{E}_{\lambda+h} \mathcal{S}_{\lambda+\lambda_0+h}f\cdot\overline{\nabla_{||}\mathcal{E}_{\lambda+h}^\ast v}\, dxdt\notag\\
 &&-\int_{\mathbb R^{n+1}}A_{||}\nabla_{||} \mathcal{E}_{\lambda} \mathcal{S}_{\lambda+\lambda_0}f\cdot\overline{\nabla_{||}\mathcal{E}_{\lambda+h}^\ast v}\, dxdt=T_1^h+T_2^h+T_3^h,
\end{eqnarray}
where
\begin{eqnarray}\label{pint2rig}
T_1^h&:=&\int_{\mathbb R^{n+1}}A_{||}\nabla_{||}\bigl( \mathcal{E}_{\lambda+h}-\mathcal{E}_{\lambda}\bigr ) \mathcal{S}_{\lambda+\lambda_0}f\cdot\overline{\nabla_{||}\mathcal{E}_{\lambda+h}^\ast v}\, dxdt,\notag\\
T_2^h&:=&\int_{\mathbb R^{n+1}}A_{||}\nabla_{||} \mathcal{E}_{\lambda}\mathcal{S}_{\lambda+\lambda_0}f\cdot\overline{\nabla_{||}\bigl(\mathcal{E}_{\lambda+h}^\ast v-\mathcal{E}_{\lambda}^\ast v}\bigr )\, dxdt,\notag\\
T_3^h&:=&\int_{\mathbb R^{n+1}}A_{||}\nabla_{||} \mathcal{E}_{\lambda+h} \bigl(\mathcal{S}_{\lambda+\lambda_0+h}f-\mathcal{S}_{\lambda+\lambda_0}f\bigr )\cdot\overline{\nabla_{||}\mathcal{E}_{\lambda+h}^\ast v}\, dxdt.
\end{eqnarray}
Using \eqref{cla1}-\eqref{cla8+} we see that
\begin{eqnarray}\label{pint2rig++}
\lim_{h\to 0}h^{-1}T_1^h&=&\int_{\mathbb R^{n+1}}A_{||}\nabla_{||}\partial_\lambda\mathcal{E}_{\lambda} \mathcal{S}_{\lambda+\lambda_0}f\cdot\overline{\nabla_{||}\mathcal{E}_{\lambda}^\ast v}\, dxdt,\notag\\
\lim_{h\to 0}h^{-1}T_2^h&=&\int_{\mathbb R^{n+1}}A_{||}\nabla_{||} \mathcal{E}_{\lambda}\mathcal{S}_{\lambda+\lambda_0}f\cdot\overline{\nabla_{||}\partial_\lambda\mathcal{E}_{\lambda}^\ast v}\, dxdt,\notag\\
\lim_{h\to 0}h^{-1}T_3^h&=&\int_{\mathbb R^{n+1}}A_{||}\nabla_{||} \mathcal{E}_{\lambda} \partial_\lambda\mathcal{S}_{\lambda+\lambda_0}f\cdot\overline{\nabla_{||}\mathcal{E}_{\lambda}^\ast v}\, dxdt.
\end{eqnarray}
Using these deductions we can conclude that
 \begin{eqnarray*}
I_1&=&-\int_{0}^\infty\int_{\mathbb R^{n+1}}\bigl ((A_{||}\nabla_{||} \partial_\lambda\mathcal{E}_\lambda \mathcal{S}_{\lambda+\lambda_0}f)\cdot\overline{\nabla_{||}\mathcal{E}_\lambda^\ast v}\bigr )\, dxdtd\lambda\notag\\
&&-\int_{0}^\infty\int_{\mathbb R^{n+1}} \bigl ((A_{||}\nabla_{||} \mathcal{E}_\lambda \mathcal{S}_{\lambda+\lambda_0}f)\cdot\overline{\nabla_{||}\partial_\lambda\mathcal{E}_\lambda^\ast v}\bigr )\, dxdtd\lambda\notag\\
&&-\int_{0}^\infty\int_{\mathbb R^{n+1}}\bigl ((A_{||}\nabla_{||} \mathcal{E}_\lambda\partial_\lambda \mathcal{S}_{\lambda+\lambda_0}f)\cdot\overline{\nabla_{||}\mathcal{E}_\lambda^\ast v}\bigr )\, dxdtd\lambda\notag\\
&=:&I_{11}+I_{12}+I_{13},
\end{eqnarray*}
and we emphasize that by our assumptions, and  \eqref{cla1}-\eqref{cla8+}, $I_{11}-I_{13}$ are well defined. To proceed we first note that
 \begin{eqnarray*}
 I_{11}&=&-\int_{0}^\infty\langle \mathcal{L}_{||}^\ast\mathcal{E}_\lambda^\ast v, \partial_\lambda\mathcal{E}_\lambda \mathcal{S}_{\lambda+\lambda_0}f\rangle_{\bar{\mathbb H}^\ast}\, d\lambda=-\int_{0}^\infty\langle \mathcal{E}_\lambda^\ast\mathcal{L}_{||}^\ast v, \partial_\lambda\mathcal{E}_\lambda \mathcal{S}_{\lambda+\lambda_0}f\rangle_{\bar{\mathbb H}^\ast}\, d\lambda,\notag\\
 I_{12}&=&-\int_{0}^\infty\langle \mathcal{L}_{||}\mathcal{E}_\lambda \mathcal{S}_{\lambda+\lambda_0}f, \partial_\lambda\mathcal{E}_\lambda^\ast v \rangle_{\bar{\mathbb H}^\ast}\, d\lambda=-\int_{0}^\infty\langle \mathcal{E}_\lambda \mathcal{L}_{||} \mathcal{S}_{\lambda+\lambda_0}f, \partial_\lambda\mathcal{E}_\lambda^\ast v \rangle_{\bar{\mathbb H}^\ast}\, d\lambda,
\end{eqnarray*}
by \eqref{cla11}. Let
$$J:=\int_{0}^\infty\int_{\mathbb R^{n+1}} |\mathcal{E}_\lambda \mathcal{L}_{||}  \mathcal{S}_{\lambda+\lambda_0}f|^2\, \lambda dxdtd\lambda.$$
Then, using \eqref{cla9} , the $L^2$-boundedness of $\mathcal{E}_\lambda$ and $\mathcal{E}_\lambda^\ast$, Lemma \ref{le8-}, and the square function estimates , Theorem \ref{thm1}, we see that
\begin{eqnarray*}
|I_{11}|+|I_{12}|
	&\leq& c (|||\lambda\partial_t\mathcal{S}_{\lambda+\lambda_0}f|||+J^{1/2})||v||_{\mathbb H}\notag\\
&\leq&c(\Phi(f)+||f||_2+J^{1/2})||v||_{\mathbb H},
\end{eqnarray*}
where we on the last line have used Lemma \ref{lemsl1}. Next, referring to \eqref{weak+} we have
\begin{eqnarray*}
\mathcal{L}_{||}\mathcal{S}_{\lambda+\lambda_0}f&=&\sum_{j=1}^{n+1}A_{n+1,j}D_{n+1}D_j\mathcal{S}_{\lambda+\lambda_0}f\notag\\
&&+\sum_{i=1}^{n}D_i(A_{i,n+1}D_{n+1}\mathcal{S}_{\lambda+\lambda_0}f)+\partial_t\mathcal{S}_{\lambda+\lambda_0}f
\end{eqnarray*}
in a weak sense for almost every $\lambda$. Using this, and the $L^2$-boundedness of $\mathcal{E}_\lambda$, Lemma \ref{le8-}, we see that
\begin{eqnarray*}
J \ \leq \ c(|||\lambda\nabla\partial_\lambda \mathcal{S}_{\lambda+\lambda_0}f|||^2+|||\lambda\partial_t \mathcal{S}_{\lambda+\lambda_0}f|||^2+\tilde J),
\end{eqnarray*}
where
\begin{eqnarray*}
\tilde J:=\int_{0}^\infty\int_{\mathbb R^{n+1}} |\mathcal{E}_\lambda\sum_{i=1}^{n}D_i(A_{i,n+1}\partial_\lambda \mathcal{S}_{\lambda+\lambda_0}f)|^2\, \lambda dxdtd\lambda.
\end{eqnarray*}
In particular, again using Lemma \ref{lemsl1} we see that
\begin{eqnarray*}
J\leq c(\Phi(f)+||f||_2+\tilde J).
\end{eqnarray*}
To estimate $\tilde J$, let $A_{n+1}^{||}:=(A_{1,n+1},...,A_{n,n+1})$. Then
\begin{eqnarray*}
\tilde J&=&\int_{0}^\infty\int_{\mathbb R^{n+1}} |\mathcal{E}_\lambda \div_{||}(A_{n+1}^{||}\partial_\lambda \mathcal{S}_{\lambda+\lambda_0}f)|^2\, \lambda{dxdtd\lambda}\notag\\
&=&\int_{0}^\infty\int_{\mathbb R^{n+1}} |\mathcal{U}_\lambda (A_{n+1}^{||}\partial_\lambda \mathcal{S}_{\lambda+\lambda_0}f)|^2\, \frac{dxdtd\lambda}{\lambda},
\end{eqnarray*}
where $\mathcal{U}_\lambda:=\lambda\mathcal{E}_\lambda\div_{||}$. We write
\begin{eqnarray*}
\mathcal{U}_\lambda A_{n+1}^{||}&=&\mathcal{U}_\lambda A_{n+1}^{||}-(\mathcal{U}_\lambda A_{n+1}^{||})\P_\lambda+(\mathcal{U}_\lambda A_{n+1}^{||})\P_\lambda\notag\\
&=:&\mathcal{R}_\lambda+(\mathcal{U}_\lambda A_{n+1}^{||})\P_\lambda.
\end{eqnarray*}
Then
\begin{eqnarray*}
\tilde J\leq \tilde J_{1}+\tilde J_{2},
\end{eqnarray*}
where
\begin{eqnarray*}
\tilde J_{1}&:=&\int_{0}^\infty\int_{\mathbb R^{n+1}} |\R_\lambda\partial_\lambda \mathcal{S}_{\lambda+\lambda_0}f|^2\, \frac{dxdtd\lambda}{\lambda},\notag\\
\tilde J_{2}&:=&\int_{0}^\infty\int_{\mathbb R^{n+1}} |(\mathcal{U}_\lambda A_{n+1}^{||})\P_\lambda(\partial_\lambda \mathcal{S}_{\lambda+\lambda_0}f)|^2\, \frac{dxdtd\lambda}{\lambda}.
\end{eqnarray*}
 Using Lemma \ref{ilem2--a}, and Lemma \ref{lemsl1}, we see that
\begin{eqnarray*}
\tilde J_{1}&\leq& c\int_{0}^\infty\int_{\mathbb R^{n+1}} |\nabla\partial_\lambda \mathcal{S}_{\lambda+\lambda_0}f|^2\, \lambda{dxdtd\lambda}\notag\\
&&+c\int_{0}^\infty\int_{\mathbb R^{n+1}} |\partial_t\partial_\lambda \mathcal{S}_{\lambda+\lambda_0}f|^2\, \lambda^3{dxdtd\lambda}\notag\\
&\leq&c(\Phi(f)^2+||f||_2^2).
\end{eqnarray*}
Furthermore, by the Carleson measure estimate in Lemma \ref{ilem2--} we have
\begin{eqnarray*}
\tilde J_{2}\leq c||N_\ast(\P_\lambda(\partial_\lambda \mathcal{S}_{\lambda}f))||_2^2.
\end{eqnarray*}
Finally, we note that
\begin{eqnarray*}
||N_\ast(\P_\lambda(\partial_\lambda \mathcal{S}_{\lambda}f))||_2\leq c ||M(N_{\ast\ast}(\partial_\lambda \mathcal{S}_{\lambda}f))||_2\leq
c||N_{\ast\ast}(\partial_\lambda \mathcal{S}_{\lambda}f)||_2
\end{eqnarray*}
where $M$ is the  parabolic Hardy-Littlewood maximal function.
Putting all these estimates together we can conclude that
\begin{eqnarray*}
|I_{11}|+|I_{12}|\leq \bigl(\Phi(f)+||f||_2+||N_{\ast\ast}(\partial_\lambda \mathcal{S}_{\lambda}f)||_2\bigr)||v||_{\mathbb H},
\end{eqnarray*}
which completes the estimate of $|I_{11}|+|I_{12}|$. We next estimate $I_{13}$. Integrating by parts with respect to $\lambda$ we deduce, by repeating the argument above, that
\begin{eqnarray*}
I_{13}&=&-\int_{0}^\infty\int_{\mathbb R^{n+1}}\bigl (A_{||}\nabla_{||} \mathcal{E}_\lambda\partial_\lambda \mathcal{S}_{\lambda+\lambda_0}f\cdot\overline{\nabla_{||}\mathcal{E}_\lambda^\ast v}\bigr )\, dxdtd\lambda\notag\\
&=&\int_{0}^\infty\int_{\mathbb R^{n+1}}\partial_\lambda\bigl (A_{||}\nabla_{||} \mathcal{E}_\lambda\partial_\lambda \mathcal{S}_{\lambda+\lambda_0}f\cdot\overline{\nabla_{||}\mathcal{E}_\lambda^\ast v}\bigr )\, \lambda dxdtd\lambda\notag\\
&=&\int_{0}^\infty\int_{\mathbb R^{n+1}}\bigl ((A_{||}\nabla_{||} \partial_\lambda\mathcal{E}_\lambda \partial_\lambda\mathcal{S}_{\lambda+\lambda_0}f)\cdot\overline{\nabla_{||}\mathcal{E}_\lambda^\ast v}\bigr )\, \lambda dxdtd\lambda\notag\\
&&+\int_{0}^\infty\int_{\mathbb R^{n+1}} \bigl ((A_{||}\nabla_{||} \mathcal{E}_\lambda  \partial_\lambda\mathcal{S}_{\lambda+\lambda_0}f)\cdot\overline{\nabla_{||}\partial_\lambda\mathcal{E}_\lambda^\ast v}\bigr )\, \lambda dxdtd\lambda\notag\\
&&+\int_{0}^\infty\int_{\mathbb R^{n+1}}\bigl ((A_{||}\nabla_{||} \mathcal{E}_\lambda\partial_\lambda^2 \mathcal{S}_{\lambda+\lambda_0}f)\cdot\overline{\nabla_{||}\mathcal{E}_\lambda^\ast v}\bigr )\, \lambda dxdtd\lambda\notag\\
&=:&I_{131}+I_{132}+I_{133}.
\end{eqnarray*}
By repeating the estimates above used to control $|I_{11}|+|I_{12}|$ , we see that
\begin{eqnarray*}
(|I_{131}|+|I_{132}|)^2&\leq &c\int_{0}^\infty\int_{\mathbb R^{n+1}} |\nabla\partial_\lambda^2 \mathcal{S}_{\lambda+\lambda_0}f|^2\, \lambda^3 dxdtd\lambda,\notag\\
&&+c\int_{0}^\infty\int_{\mathbb R^{n+1}} |\partial_t \partial_\lambda\mathcal{S}_{\lambda+\lambda_0}f|^2\, \lambda^3 dxdtd\lambda\notag\\
&&+c\int_{0}^\infty\int_{\mathbb R^{n+1}} |\partial_t \partial_\lambda^2 \mathcal{S}_{\lambda+\lambda_0}f|^2\, \lambda^5 dxdtd\lambda+c||N_\ast(\P_\lambda(\lambda\partial_\lambda^2 \mathcal{S}_{\lambda}f))||_2^2.
\end{eqnarray*}
Furthermore,
 \begin{eqnarray*}
I_{133}&=&\int_{0}^\infty\int_{\mathbb R^{n+1}}\bigl (A_{||}\nabla_{||} \mathcal{E}_\lambda\partial_\lambda^2 \mathcal{S}_{\lambda+\lambda_0}f\cdot\overline{\nabla_{||}\mathcal{E}_\lambda^\ast v}\bigr )\, \lambda dxdtd\lambda\notag\\
&=&-\int_{0}^\infty\int_{\mathbb R^{n+1}}\mathcal{E}_\lambda\partial_\lambda^2 \mathcal{S}_{\lambda+\lambda_0}f\overline{\mathcal{E}_\lambda^\ast \mathcal{L}_{||}^\ast v}\, \lambda dxdtd\lambda,
\end{eqnarray*}
by previous arguments. Using the $L^2$-boundedness of $\mathcal{E}_\lambda$, Lemma \ref{le8-} and the square function estimate for $\mathcal{E}_\lambda^\ast \mathcal{L}_{||}^\ast$, Theorem \ref{thm1}, we can conclude that
 \begin{eqnarray*}
|I_{133}| &\leq &c\biggl (\int_{0}^\infty\int_{\mathbb R^{n+1}} |\partial_\lambda^2 \mathcal{S}_{\lambda+\lambda_0}f|^2\, \lambda dxdtd\lambda\biggr )^{1/2}||v||_{\mathbb H}.
\end{eqnarray*}
Hence, again using Lemma \ref{lemsl1} we see that
\begin{eqnarray*}
|I_{13}|\leq c\bigl(\Phi(f)+||f||_2+||N_\ast(\P_\lambda(\lambda\partial_\lambda^2 \mathcal{S}_{\lambda}f))||_2\bigr)||v||_{\mathbb H},
\end{eqnarray*}
Again
\begin{eqnarray*}
||N_\ast(\P_\lambda(\lambda\partial_\lambda^2 \mathcal{S}_{\lambda}f))||_2\leq c||M(N_{\ast\ast}(\lambda\partial_\lambda^2 \mathcal{S}_{\lambda}f))||_2\leq
c||N_{\ast\ast}(\lambda\partial_\lambda^2 \mathcal{S}_{\lambda}f)||_2,
\end{eqnarray*}
and using  \eqref{eq14+} and Lemma \ref{le1--} we see that
\begin{eqnarray*}
||N_{\ast\ast}(\lambda\partial_\lambda^2 \mathcal{S}_{\lambda}f)||_2\leq c||N_{\ast\ast}(\partial_\lambda \mathcal{S}_{\lambda}f)||_2,
\end{eqnarray*}
after a slight redefinition of the non-tangential maximal function on the right hand side. This completes the proof of $I_1$.

We next estimate $I_2$. To start the estimate of $I_2$ we first deduce, by arguing along the lines of \eqref{pint1}-\eqref{pint2rig++}, that
\begin{eqnarray*}
I_2&=&-\int_{0}^\infty\int_{\mathbb R^{n+1}}\partial_\lambda \bigl (H_tD_{1/2}^t\mathcal{E}_\lambda \mathcal{S}_{\lambda+\lambda_0}f\cdot\overline{D_{1/2}^t\mathcal{E}_\lambda^\ast v}\bigr )\, dxdtd\lambda\notag\\
&=&-\int_{0}^\infty\int_{\mathbb R^{n+1}}(H_tD_{1/2}^t\partial_\lambda\mathcal{E}_\lambda\mathcal{S}_{\lambda+\lambda_0}f)\cdot\overline{D_{1/2}^t\mathcal{E}_\lambda^\ast v}\, dxdtd\lambda\notag\\
&&-\int_{0}^\infty\int_{\mathbb R^{n+1}} (H_tD_{1/2}^t \mathcal{E}_\lambda \mathcal{S}_{\lambda+\lambda_0}f)\cdot\overline{D_{1/2}^t\partial_\lambda\mathcal{E}_\lambda^\ast v}\, dxdtd\lambda\notag\\
&&-\int_{0}^\infty\int_{\mathbb R^{n+1}}(H_tD_{1/2}^t\mathcal{E}_\lambda\partial_\lambda \mathcal{S}_{\lambda+\lambda_0}f)\cdot\overline{D_{1/2}^t\mathcal{E}_\lambda^\ast v}\, dxdtd\lambda\notag\\
&=:&I_{21}+I_{22}+I_{23}.
\end{eqnarray*}
Using the $L^2$-boundedness of $\mathcal{E}_\lambda$ and $\mathcal{E}_\lambda^\ast$, Lemma \ref{le8-}, and the square function estimates, Theorem \ref{thm1}, that $\mathcal{H}_{||}$ commutes with $\mathcal{E}_\lambda$, $D_{1/2}^t$, and $H_tD_{1/2}^t$,  and that
$\mathcal{H}_{||}^\ast$ commutes with $\mathcal{E}_\lambda^\ast$, $D_{1/2}^t$, and $H_tD_{1/2}^t$, in both cases in the sense described above, we can as in the estimate of $|I_{11}|+|I_{12}|$ deduce that
\begin{eqnarray}\label{kau}
|I_{22}|&\leq& c|||\lambda\partial_t\mathcal{S}_{\lambda+\lambda_0}f||| \, ||v||_{\mathbb H}\leq c(\Phi(f)+||f||_2)||v||_{\mathbb H}.
\end{eqnarray}
At the final step of this deduction we have also used Lemma \ref{lemsl1}.  Integrating by parts with respect to $\lambda$ in $I_{23}$, and repeating the arguments used in the estimates  of
$|I_{21}|$ and $|I_{22}|$, it is easily seen, using Lemma \ref{lemsl1}, that
\begin{eqnarray*}
|I_{23}|&\leq& c(\Phi(f)+||f||_2)||v||_{\mathbb H}+|\tilde I_{23}|,
\end{eqnarray*}
where
\begin{eqnarray*}
\tilde I_{23}=\int_{0}^\infty\int_{\mathbb R^{n+1}}\bigl ((H_tD_{1/2}^t\mathcal{E}_\lambda\partial_\lambda^2 \mathcal{S}_{\lambda+\lambda_0}f)\cdot\overline{D_{1/2}^t\mathcal{E}_\lambda^\ast v}\bigr )\, \lambda dxdtd\lambda.
\end{eqnarray*}
However, again using Lemma \ref{le8-} and Theorem \ref{thm1}
\begin{eqnarray*}
|\tilde I_{23}|\leq |||\lambda \partial_\lambda^2 \mathcal{S}_{\lambda+\lambda_0}f||| \,
 |||\lambda\partial_t\mathcal{E}_\lambda^\ast v |||\leq c\Phi(f)||v||_{\mathbb H}.
\end{eqnarray*}
This completes the proof of the lemma.

\subsection{Proof of Lemma \ref{lemsl1+}}
To prove Lemma \ref{lemsl1+} it suffices to estimate
\begin{eqnarray*}
 \int_{\mathbb R^{n+1}} (\mathbb D_{n+1}\mathcal{S}_{\lambda _0}f)\bar g\, dxdt
\end{eqnarray*}
when $f, g\in C_0^\infty(\mathbb  R^{n+1},\mathbb C)$, $||g||_2=1$. Let in the following $\P_\lambda$ be a parabolic approximation of the identity. Then, using \eqref{uau} $(ii)$  we see that
\begin{eqnarray*}
\biggl |\int_{\mathbb R^{n+1}}(\mathbb D_{n+1}\mathcal{S}_{\lambda+\lambda _0}f) \P_\lambda \bar g\, dxdt\biggr |
	&\leq& c|| D_{1/2}^t\mathcal{S}_{\lambda+\lambda _0}f||_2|| \P_\lambda \bar g||_2 \notag\\
	&\leq& \frac{c}{\lambda^{n/2+1}} \| \partial_t \mathcal{S}_{\lambda + \lambda_0}f \|_2
	     \| \mathcal{S}_{\lambda + \lambda_0}f \|_2.
\end{eqnarray*}
Again using \eqref{pint1a}, H{\"o}lder's inequality, the fact that $\Phi(f)<\infty$,
Lemma \ref{le5} and Lemma \ref{appf}  we deduce that
\begin{eqnarray*}
\biggl |\int_{\mathbb R^{n+1}}(\mathbb D_{n+1}\mathcal{S}_{\lambda+\lambda _0}f) \P_\lambda \bar g\, dxdt\biggr |
\longrightarrow 0 \quad \mbox{ as $\lambda\to\infty$}.
\end{eqnarray*}
Hence,
\begin{eqnarray*}
 -\int_{\mathbb R^{n+1}} (\mathbb D_{n+1}\mathcal{S}_{\lambda _0}f)\bar g\, dxdt
 &=&\int_0^\infty \int_{\mathbb R^{n+1}}
 \partial_\lambda((\mathbb D_{n+1}\mathcal{S}_{\lambda+\lambda _0}f) \P_\lambda \bar g)\, dxdtd\lambda\notag\\
 &=&\int_0^\infty \int_{\mathbb R^{n+1}}(\mathbb D_{n+1}\partial_\lambda \mathcal{S}_{\lambda+\lambda _0}f) \P_\lambda \bar g\, dxdtd\lambda\notag\\
 &&+\int_0^\infty \int_{\mathbb R^{n+1}} (\mathbb D_{n+1}\mathcal{S}_{\lambda+\lambda _0}f)\partial_\lambda(\P_\lambda \bar g)\, dxdtd\lambda\notag\\
 &=:&I+II.
 \end{eqnarray*}
Note that  $\mathbb D_{n+1}=i\mathbb D^{-1}\partial_t$  and that $\partial_\lambda \P_\lambda=\mathbb D \mathcal{Q}_\lambda$ where
$\mathcal{Q}_\lambda$ is an approximation of the zero operator. To
 prove this one can use that the kernel of $\partial_\lambda \P_\lambda$ has not only zero mean
 but also first order vanishing moments if $\P$ is an even function (see also \cite[p. 366]{HL}). Using this we see that
 \begin{eqnarray*}
|II|^2&\leq&\biggl |\int_0^\infty \int_{\mathbb R^{n+1}} (\partial_t \mathcal{S}_{\lambda+\lambda _0}f) \mathcal{Q}_\lambda \bar g\, dxdtd\lambda\biggr |^2\notag\\
&\leq &c\int_{0}^\infty\int_{\mathbb R^{n+1}}|\partial_t\mathcal{S}_{\lambda  +\lambda_0}f|^2\,  \lambda{ dxdtd\lambda}\leq c(\Phi(f)+||f||_2)^2,
 \end{eqnarray*}
 by \eqref{li} and  Lemma \ref{lemsl1}. To handle $I$ we again integrate by parts with respect to $\lambda$,
 \begin{eqnarray*}
 -I&=&\int_0^\infty \int_{\mathbb R^{n+1}}(\mathbb D_{n+1}\partial_\lambda^2 \mathcal{S}_{\lambda+\lambda _0}f) \P_\lambda \bar g\, \lambda dxdtd\lambda\notag\\
 &&+\int_0^\infty \int_{\mathbb R^{n+1}}(\mathbb D_{n+1}\partial_\lambda \mathcal{S}_{\lambda+\lambda _0}f)\partial_\lambda(\P_\lambda \bar g)\, \lambda dxdtd\lambda\notag\\
 &=:&I_1+I_2.
 \end{eqnarray*}
 Arguing as above we immediately see that
  \begin{eqnarray*}
|I_2|^2&\leq&c\int_{0}^\infty\int_{\mathbb R^{n+1}}|\partial_t\partial_\lambda\mathcal{S}_{\lambda  +\lambda_0}f|^2\,  \lambda^3{ dxdtd\lambda}\leq c(\Phi(f)+||f||_2)^2.
 \end{eqnarray*}
 Focusing on $I_1$,  Lemma \ref{little2} implies
 $$|I_1|
      \leq ||| \lambda \partial_\lambda^2 \mathcal{S}_{\lambda + \lambda_0}f ||| \ ||| \lambda \mathbb{D}_{n+1} \mathcal{P}_\lambda g  |||
      \leq c ||| \lambda \partial_\lambda^2 \mathcal{S}_{\lambda + \lambda_0}f ||| \ ||| \lambda \mathbb{D} \mathcal{P}_\lambda g  |||
      \leq c \Phi(f),$$
 and the proof of the lemma is complete.

\subsection{Proof of Lemma \ref{lemsl1+k}}
 Let $K\gg 2$ be a degree of freedom and let $\phi\in C_0^\infty(\mathbb R)$ be an even function with $\phi=1$ on $(-3/2,-2/K)\cup (2/K,3/2)$ and with support in $(-2,-1/K)\cup(1/K,2)$.  Recall that the multiplier defining
$D_{1/2}^t$ is $|\tau|^{1/2}$. We write
\begin{eqnarray*}
|\tau|^{1/2}&=&|\tau|^{1/2}\phi(\tau/||(\xi,\tau)||^2)+ |\tau|^{1/2}(1-\phi)(\tau/||(\xi,\tau)||^2)\notag\\
&=&\mbox{sgn}(\tau)\frac {||(\xi,\tau)||}{|\tau|^{1/2}}\phi(\tau/||(\xi,\tau)||^2)\frac {\tau}{||(\xi,\tau)||}\notag\\
&&-\sum_{j=1}^n|\tau|^{1/2}\frac {i\xi_j}{|\xi|^2}(1-\phi)(\tau/||(\xi,\tau)||^2)i\xi_j.
\end{eqnarray*}
Hence, introducing the multipliers
\begin{eqnarray*}
m_1(\xi,\tau)&=&\mbox{sgn}(\tau)\frac {||(\xi,\tau)||}{|\tau|^{1/2}}\phi(\tau/||(\xi,\tau)||^2),\notag\\
m_{2,j}(\xi,\tau)&=&-|\tau|^{1/2}\frac {i\xi_j}{|\xi|^2}(1-\phi)(\tau/||(\xi,\tau)||^2),
\end{eqnarray*}
for $j\in\{1,...,n\}$ we can conclude the existence of kernels $L_1$, $L_{2,j}$, corresponding to $m_1$, $m_{2,j}$, such that
\begin{eqnarray*}
D_{1/2}^t=L_1\ast\mathbb D_{n+1}+c\sum_{j=1}^n L_{2,j}\ast\partial_{x_j},
\end{eqnarray*}
where $\ast$ denotes convolution. Choosing $K=K(n)$ large enough we see  that the multipliers $m_1$ and $m_{2,j}$ are bounded, and hence $L_1$ and $L_{2,j}$ are bounded operators
on $L^2(\mathbb R^{n+1},\mathbb C)$. This completes the proof of Lemma \ref{lemsl1+k}.

\section{Proof of  Theorem \ref{th0}}\label{sec7}

Assume that $\mathcal{H}$,  $\mathcal{H}^\ast$,   satisfy \eqref{eq3}-\eqref{eq4} as well as the De Giorgi-Moser-Nash estimates stated in \eqref{eq14+}-\eqref{eq14++}. Assume also that there exists a constant $C $ such that \eqref{keyestint-}
     holds whenever $f\in L^2(\mathbb R^{n+1},\mathbb C)$. To prove Theorem \ref{th0} we need to prove that there exists a constant $c$, depending at most
     on $n$, $\Lambda$,  the De Giorgi-Moser-Nash constants and $C $, such that the inequalities in \eqref{keyestint+a} $(i)-(iv)$ hold. Again, we only have to prove \eqref{keyestint+a} $(i)-(iv)$ for
      $\mathcal{S}_\lambda^{\mathcal{H}}$ as the corresponding results for $\mathcal{S}_\lambda^{\mathcal{H}^\ast}$ follow by analogy. To start the proof, we first note that
     \eqref{keyestint+a} $(i)$ is an immediate consequence of Lemma \ref{lemsl1++} $(i)$ and the assumption in \eqref{keyestint-} $(i)$. Using Lemma
     \ref{lemsl1c}, Lemma \ref{lemsl1+}, and Lemma \ref{lemsl1+k},  we see that \eqref{keyestint+a} $(i)$ and the assumptions in \eqref{keyestint-} imply that
\begin{eqnarray*}
 \sup_{\lambda>0}||\mathbb D\mathcal{S}_{\lambda}^{\mathcal{H}}f||_{2}\leq c||f||_2.
\end{eqnarray*}
This proves \eqref{keyestint+a} $(ii)$. \eqref{keyestint+a} $(iii)$, $(iv)$, now follows immediately form these estimates and Lemma \ref{lemsl1++}.

\section{Proof of Theorem \ref{th2} and Theorem \ref{parabolicm}}\label{sec8}

Assume that $\mathcal{H}=\partial_t-\mbox{div }A\nabla$ satisfies \eqref{eq3}-\eqref{eq4}. Assume in addition that
 $A$ is real and symmetric. Then \eqref{eq14+} and \eqref{eq14++} hold. To prove Theorem \ref{th2} we have to prove that there exists a constant $C $, depending at most
     on $n$, $\Lambda$, such that  \eqref{keyestint-} holds with this $C $. We first focus on the estimate in \eqref{keyestint-} $(ii)$. Consider
 \begin{eqnarray}\label{ker1}
  \psi_{\lambda}(x,t,y,s):=\lambda K_{1,\lambda}(x,t,y,s)=\lambda\partial_\lambda^{2}\Gamma_\lambda(x,t,y,s).
 \end{eqnarray}
 Then, using Lemma \ref{le2+} we see that $\psi_{\lambda}(x,t,y,s)$ satisfies the Calderon-Zygmund bounds
 \begin{eqnarray}\label{CZ1}
|\psi_{\lambda}(x,t,y,s)|\leq c{|\lambda|}(d_\lambda(x,t,y,s))^{-n-3},
\end{eqnarray}
and
\begin{eqnarray}\label{CZ2}
|\mathbb D^h(\psi_{\lambda}(\cdot,\cdot,y,s))(x,t)|&\leq& c{|\lambda|||h||^\alpha}(d_\lambda(x,t,y,s))^{-n-3-\alpha}\notag\\
&\leq& c{||h||^\alpha}(d_\lambda(x,t,y,s))^{-n-2-\alpha},
\end{eqnarray}
for some $\alpha>0$, whenever $2||h||\leq (|x-y|+|t-s|^{1/2})$ or $2||h||\leq |\lambda|$. Our proof of Theorem \ref{th2} is based on the following two theorems proved below.
\begin{theorem}\label{ltb} Assume that $\psi_{\lambda}$  satisfies \eqref{CZ1} and  \eqref{CZ2}. Let
$$\theta_\lambda f(x,t):=\int_{\mathbb R^{n+1}}\psi_{\lambda}(x,t,y,s)f(y,s)\, dyds,$$
whenever $f\in L^2(\mathbb R^{n+1},\mathbb C)$. Suppose that there exists a system $\{b_Q\}$ of functions, $b_Q:\mathbb R^{n+1}\to\mathbb C$, index by parabolic cubes $Q\subseteq\mathbb R^{n+1}$, and a constant $c$, independent of $Q$, such that for each cube $Q$ the following is true.
\begin{eqnarray}\label{testf}
(i)&&\int_{\mathbb R^{n+1}}|b_Q(x,t)|^2\, dxdt\leq c|Q|,\notag\\
(ii)&&\int_0^{l(Q)}\int_{Q}|\theta_\lambda b_Q(x,t)|^2\, \frac {dxdtd\lambda}\lambda\leq c|Q|,\notag\\
(iii)&&c^{-1}|Q|\leq \mbox{Re }\int_{Q} b_Q(x,t)\, dxdt.
\end{eqnarray}
Then there exists a constant $c$ such that
\begin{eqnarray}\label{testres}
|||\theta_\lambda f|||=\biggl (\int_0^\infty\int_{\mathbb R^{n+1}}|\theta_\lambda f(x,t)|^2\, \frac{dxdtd\lambda}\lambda\biggr )^{1/2}\leq c||f||_2,
\end{eqnarray}
whenever $f\in L^2(\mathbb R^{n+1},\mathbb C)$.
\end{theorem}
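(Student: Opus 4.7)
The approach follows the local Tb-theorem paradigm for square functions (as in Auscher-Hofmann-Lacey-McIntosh-Tchamitchian), adapted to the parabolic setting. There are two main components: a reduction of the square function bound \eqref{testres} to a parabolic Carleson measure estimate for $|\theta_\lambda 1|^2\,dxdtd\lambda/\lambda$, and a stopping-time argument that uses the test functions $\{b_Q\}$ to establish that Carleson estimate.

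For the reduction, I plan to write $\theta_\lambda f = (\theta_\lambda 1)\P_\lambda f + R_\lambda f$ with $R_\lambda := \theta_\lambda - (\theta_\lambda 1)\P_\lambda$, where $\theta_\lambda 1$ is locally integrable thanks to the extra factor of $\lambda$ in $\psi_\lambda$ in \eqref{CZ1}. Since $R_\lambda 1 = 0$ formally, the parabolic H\"older estimate \eqref{CZ2} together with a Schur/Cotlar-Stein argument (modeled on the proof of Lemma \ref{little3}) yields $|||R_\lambda f||| \leq c||f||_2$. Granted the Carleson estimate for $|\theta_\lambda 1|^2\,dxdtd\lambda/\lambda$, Carleson's embedding then bounds $|||(\theta_\lambda 1)\P_\lambda f|||$ by $c||N_\ast(\P_\lambda f)||_2 \leq c||f||_2$, so \eqref{testres} follows.

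The Carleson estimate is the heart of the matter. Fix a parabolic cube $Q_0$, set $T_{Q_0} := Q_0 \times (0, l(Q_0))$, and aim for $\iint_{T_{Q_0}}|\theta_\lambda 1|^2\, dxdtd\lambda/\lambda \leq c|Q_0|$. Run a parabolic dyadic stopping time: let $\{Q_j\}$ be the maximal dyadic subcubes of $Q_0$ on which either $|\langle b_{Q_0}\rangle_{Q_j}| > K$ or $\mathrm{Re}\,\langle b_{Q_0}\rangle_{Q_j} < \eta$. A Chebyshev argument using (i) controls the total measure coming from the first condition, and (iii) (together with the $L^2$ bound) controls that of the second, giving $\sum_j |Q_j| \leq \delta|Q_0|$ with $\delta < 1$ once $K$ is large and $\eta$ small enough. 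On the sawtooth $F_{Q_0} := T_{Q_0}\setminus\bigcup_j T_{Q_j}$, the dyadic average $\mathcal{A}_\lambda^{Q_0}b_{Q_0}$ from \eqref{dy} satisfies $|\mathcal{A}_\lambda^{Q_0}b_{Q_0}| \leq K$ and $\mathrm{Re}\,\mathcal{A}_\lambda^{Q_0}b_{Q_0} \geq \eta$, hence $|\theta_\lambda 1| \leq \eta^{-1}|(\theta_\lambda 1)\mathcal{A}_\lambda^{Q_0}b_{Q_0}|$ there. Using the identity
\begin{equation*}
(\theta_\lambda 1)\mathcal{A}_\lambda^{Q_0}b_{Q_0} = \theta_\lambda b_{Q_0} - \bigl[\theta_\lambda - (\theta_\lambda 1)\mathcal{A}_\lambda^{Q_0}\bigr]b_{Q_0},
\end{equation*}
hypothesis (ii) bounds the contribution of $\theta_\lambda b_{Q_0}$ by $c|Q_0|$, while the orthogonality bound for $\theta_\lambda - (\theta_\lambda 1)\mathcal{A}_\lambda^{Q_0}$ (obtained from the bound on $R_\lambda$ combined with Lemma \ref{little3}) together with (i) controls the remainder also by $c|Q_0|$. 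A standard self-improvement argument, iterating the inequality $\iint_{T_{Q_0}}\leq c|Q_0| + \sum_j\iint_{T_{Q_j}}$ against $\sum_j|Q_j|\leq \delta|Q_0|$ after first truncating $\psi_\lambda$ in $\lambda$ to ensure a priori finiteness, then yields the full Carleson bound on $T_{Q_0}$.

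The principal obstacle is establishing the orthogonality bounds for $R_\lambda$ and for its dyadic variant $\theta_\lambda - (\theta_\lambda 1)\mathcal{A}_\lambda^{Q_0}$. In the parabolic setting these require carefully combining \eqref{CZ2} with the non-smoothness of the dyadic averaging operator $\mathcal{A}_\lambda^{Q_0}$: only an integrated (not pointwise) H\"older estimate on the difference kernel is available, exactly the subtlety addressed in the proof of Lemma \ref{little3}.
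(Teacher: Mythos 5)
Your proposal is correct and follows essentially the same route as the paper: reduction of \eqref{testres} to the Carleson measure estimate for $|\theta_\lambda 1|^2\,dxdtd\lambda/\lambda$ via the Christ--Journ\'e square-function $T1$ machinery, a stopping-time argument using \eqref{testf} $(i)$ and $(iii)$ to reduce to bounding $\int_0^{l(Q)}\int_Q|(\theta_\lambda 1)\mathcal{A}_\lambda^Q b_Q|\,dxdtd\lambda/\lambda$, and then the splitting into $\theta_\lambda b_Q$ (handled by $(ii)$) plus error terms controlled by quasi-orthogonality and Lemma \ref{little3}. The only cosmetic difference is that the paper interpolates the error term through $\P_\lambda$, writing it as $(\theta_\lambda 1)\mathcal{A}_\lambda^Q(\mathcal{A}_\lambda^Q-\P_\lambda)+((\theta_\lambda 1)\mathcal{A}_\lambda^Q\P_\lambda-\theta_\lambda)$, which is exactly the combination of Lemma \ref{little3} with the $R_\lambda$-type bound that you describe.
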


The proofs of Theorem \ref{ltb} and Theorem \ref{parabolicm} are given below. We here use Theorem \ref{ltb} and Theorem \ref{parabolicm} to complete the proof of
Theorem \ref{th2}.

\begin{proof}[Proof of \eqref{keyestint-} $(ii)$]
We simply have to produce, using Theorem \ref{ltb} and for $\theta_\lambda$ defined using the kernel in \eqref{ker1},  a system $\{b_Q\}$ of functions satisfying
\eqref{testf} $(i)-(iii)$. To do this we let
\begin{eqnarray*}
b_Q(y,s):=|Q|1_QK_-(A_Q^-,y,s),
\end{eqnarray*}
whenever $(y,s)\in\mathbb R^{n+1}$, where $1_Q$ is the indicator function for the cube $Q$ and where $K_-(A_Q^-,y,s)$ is the to
  $\mathcal{H}^\ast=-\partial_t+\mathcal{L}$ associated Poisson kernel, at $A_Q^-:=(x_Q,-l(Q),t_Q)$, defined with respect to $\mathbb R_-^{n+2}$.  Theorem \ref{parabolicm} applies to $K_-(A_Q^-,\cdot,\cdot)$ modulo trivial modifications. To verify that $b_Q$ satisfies
\eqref{testf} $(i)-(iii)$, we first note that $(i)$ is an immediate consequence of Theorem \ref{parabolicm}. Furthermore,
\begin{eqnarray*}
\int_{\mathbb R^{n+1}}b_Q(y,s)\, dyds=|Q|\omega_-^{A_Q^-}(Q)\geq c^{-1}|Q|,
\end{eqnarray*}
by elementary estimates and where $\omega_-^{A_Q^-}$ is the associated parabolic measure at $A_Q^-$ and defined with respect to $\mathbb R_-^{n+2}$. Hence $(iii)$ follows and it only remains to establish $(ii)$. Let $(x,t)\in Q$, $\lambda\in (0,l(Q))$ and note that
\begin{eqnarray*}
\theta_\lambda b_Q(x,t)&=&\int_{\mathbb R^{n+1}}\lambda\partial_\lambda^{2}\Gamma_\lambda(x,t,y,s)b_Q(y,s)\, dyds\notag\\
&=&\lambda|Q|\int_{Q}\partial_\lambda^{2}\Gamma_\lambda(x,t,y,s)K_-(A_Q^-,y,s)\, dyds\notag\\
&=&\lambda|Q|\bigl (\partial_\lambda^{2}\Gamma(x,t,\lambda, x_Q,t_Q,-l(Q))\bigr ),
\end{eqnarray*}
by the definition of ${A_Q^-}$, $K_-(A_Q^-,y,s)$, and as $\partial_\lambda^{2}\Gamma(x,t,\lambda, x_Q,t_Q,-l(Q))$ solves $\mathcal{H}^\ast u=0$ in $\mathbb R^{n+2}_-$.  Using this, and \eqref{CZ1}, we see that $(ii)$ follows by elementary manipulations. Hence, using Theorem \ref{ltb} we can conclude the validity of \eqref{keyestint-} $(ii)$.
\end{proof}

\begin{proof}[Proof of \eqref{keyestint-} $(i)$]
We first note, that we can throughout the proof assume, without loss of generality, that $f\in C_0^\infty(\mathbb R^{n+1},\mathbb R)$. Second, using  Theorem \ref{parabolicm} and the fact that if $\mathcal{H}=\partial_t-\mbox{div }A\nabla$ satisfies \eqref{eq3}-\eqref{eq4}, and if $A$ is real and symmetric, then  the estimates of the non-tangential maximal function by the square function established in \cite{B2} for the heat equation, remain valid for solutions to $\mathcal{H}u=0$. In particular, let $f\in C_0^\infty(\mathbb R^{n+1},\mathbb R)$ and consider $\lambda>0$ fixed. We let $R$ and $r$ be such that
$\lambda\ll r\ll R$ and such  that the support of
$f$ is contained in $Q_{R/4}(0,0)$. Then, using Theorem \ref{parabolicm} and \cite{B2} we see that
 \begin{eqnarray*}
||(\partial_\lambda \mathcal{S}_\lambda f)1_{Q_r(0,0)}||_2^2\leq c|||\lambda\nabla\partial_\lambda\mathcal{S}_\lambda f|||^2+cR^{n+2}|\partial_\lambda \mathcal{S}_{R/2} f(0,0)|^2,
\end{eqnarray*}
for a constant $c$ depending only on $n$, $\Lambda$. However,
 \begin{eqnarray*}
R^{n+2}|\partial_\lambda \mathcal{S}_{R/2} f(0,0)|^2\leq R^{-n-2}||f||_1^{2}.
\end{eqnarray*}
Hence, first letting $R\to\infty$ and then letting $r\to\infty$ we can conclude that
 \begin{eqnarray}\label{testf+cuusea}
||\partial_\lambda \mathcal{S}_\lambda f||_2\leq c|||\lambda\nabla\partial_\lambda\mathcal{S}_\lambda f|||.
\end{eqnarray}
Using \eqref{eq4.43} we see that
 \begin{eqnarray}\label{testf+cuua}
|||\lambda\nabla\partial_\lambda\mathcal{S}_\lambda f|||\leq c|||\lambda\partial_\lambda^2\mathcal{S}_\lambda f|||+c||f||_2.
\end{eqnarray}
\eqref{testf+cuusea}, \eqref{testf+cuua} and \eqref{keyestint-} $(ii)$ now prove \eqref{keyestint-} $(i)$.
\end{proof}

This completes the proof of Theorem \ref{th2} modulo Theorem \ref{ltb} and Theorem \ref{parabolicm}.

\subsection{Proof of Theorem \ref{ltb}}  Though there are several references for this type of argument, see \cite{CJ}, \cite{H1}, \cite{HMc} and the references therein, we will, for completion, include a sketch/proof of the argument in our context. To start with,  as $\psi_{\lambda}$  satisfies \eqref{CZ1} and   \eqref{CZ2} it is well-known, see \cite{CJ}, that to prove \eqref{testres} it suffices to prove the Carleson measure estimate
\begin{eqnarray}\label{testf+}
\sup_{Q\subset\mathbb R^{n+1}}\frac 1 {|Q|}\int_0^{l(Q)}\int_{Q}|\theta_\lambda 1|^2\, \frac {dxdtd\lambda}\lambda\leq c.
\end{eqnarray}
Using assumption $(iii)$ in the statement of Theorem \ref{ltb}, and a by now well-known stopping time argument, see \cite{H1}, one can conclude that
\begin{eqnarray*}
&&\sup_{Q\subset\mathbb R^{n+1}}\frac 1{|Q|}\int_0^{l(Q)}\int_Q|\theta_\lambda 1|^2\frac {dxdtd\lambda}\lambda
\ \leq \ c\sup_{Q\subset\mathbb R^{n+1}}\frac 1{|Q|}\int_0^{l(Q)}\int_Q|(\theta_\lambda 1 )\mathcal{A}_\lambda^Q b_{Q}|\frac {dxdtd\lambda}\lambda,
     \end{eqnarray*}
     where $\mathcal{A}_\lambda^Q$ denotes the dyadic averaging operator induced by $Q$ and introduced in \eqref{dy}. Hence, to prove \eqref{testf+} it suffices to prove that
\begin{eqnarray}\label{ff1}
\int_0^{l(Q)}\int_Q|(\theta_\lambda 1 )\mathcal{A}_\lambda^Q b_{Q}|\frac {dxdtd\lambda}\lambda\leq c|Q|,
\end{eqnarray}
for all $Q\subset\mathbb R^{n+1}$.  We write
\begin{eqnarray*}
(\theta_\lambda 1)\mathcal{A}_\lambda^Qb_{Q}=\mathcal{R}_\lambda^{(1)}b_{Q}+\mathcal{R}_\lambda^{(2)}b_{Q}+\theta_\lambda b_{Q},
\end{eqnarray*}
where
\begin{eqnarray*}
\mathcal{R}_\lambda^{(1)}b_{Q}&:=&(\theta_\lambda 1)(\mathcal{A}_\lambda^Q-\mathcal{A}_\lambda^Q \P_\lambda)b_{Q},\notag\\
\mathcal{R}_\lambda^{(2)}b_{Q}&:=&((\theta_\lambda 1)\mathcal{A}_\lambda^Q \P_\lambda-\theta_\lambda)b_{Q},
\end{eqnarray*}
and where $\P_\lambda$ is a  parabolic approximation of the identity. Using assumption $(ii)$ in the statement of Theorem \ref{ltb} we see that the contribution from the term $\theta_\lambda b_{Q}$ to the Carleson measure in \eqref{ff1} is controlled. Hence we focus on the contributions from
$\mathcal{R}_\lambda^{(1)}b_{Q}$ and $\mathcal{R}_\lambda^{(2)}b_{Q}$.  Note that
\begin{eqnarray*}
\mathcal{R}_\lambda^{(1)}&=&(\theta_\lambda 1)(\mathcal{A}_\lambda^Q-\mathcal{A}_\lambda^Q \P_\lambda)=(\theta_\lambda 1)\mathcal{A}_\lambda^Q(\mathcal{A}_\lambda^Q-\P_\lambda).
\end{eqnarray*}
Using \eqref{CZ1}, \eqref{CZ2}, and a version of Schur's lemma,  we see that
$$||(\theta_\lambda 1)\mathcal{A}_\lambda^Q||_{2\to 2}\leq c.$$
Thus, by Lemma \ref{little3},
\begin{eqnarray*}
\int _{0}^{l(Q)}\int_{Q}| \mathcal{R}_\lambda^{(1)}b_{Q}(x,t)|^2\, \frac {dxdtd\lambda}\lambda &\leq& c \int_0^\infty\int_{\mathbb R^{n+1}}|(\mathcal{A}_\lambda^Q-\P_\lambda)b_{Q}(x,t)|^2\, \frac {dxdtd\lambda}\lambda\notag\\
&\leq &c \int_{\mathbb R^{n+1}}|b_{Q}(x,t)|^2\, {dxdt}\leq c|Q|.
\end{eqnarray*}
It remains to estimate
\begin{eqnarray*}
\int _{0}^{l(Q)}\int_{Q}| \mathcal{R}_\lambda^{(2)}b_{Q}(x,t)|^2\, \frac {dxdtd\lambda}\lambda.
\end{eqnarray*}
However, using \eqref{CZ1}, \eqref{CZ2}, and that $\mathcal{R}_\lambda^{(2)}1=0$, it follows by a well known orthogonality argument, and
assumption $(i)$ in the statement of Theorem \ref{ltb}, that
\begin{eqnarray*}
\int _{0}^{l(Q)}\int_{Q}| \mathcal{R}_\lambda^{(2)}b_{Q}(x,t)|^2\, \frac {dxdtd\lambda}\lambda\leq \int_{\mathbb R^{n+1}}|b_Q(x,t)|^2\, dxdt\leq c|Q|.
\end{eqnarray*}
This completes the proof of Theorem \ref{ltb}.

\subsection{Proof of Theorem \ref{parabolicm}} We can throughout the proof assume, without loss of generality, that $A$ is smooth. Let $(X,t)=(x,x_{n+1},t)$, $(Y,s)=(y,y_{n+1},s)$. Let $f\in C(\mathbb R^{n+1})\cap L^\infty(\mathbb R^{n+1})$ be such that $f(x,t)\to 0$ as $||(x,t)||\to\infty$. Then there exists a unique weak solution to $\mathcal{H}u=(\partial_t+\mathcal{L})u=0$ in $\mathbb R^{n+2}_+$ such that $u\in C(\mathbb R^{n+1}\times[0,\infty))$, $u(x,0,t)=f(x,t)$ whenever $(x,t)\in \mathbb R^{n+1}$. Furthermore, $||u||_{L^\infty(\mathbb R^{n+2}_+)}\leq ||f||_{L^\infty(\mathbb R^{n+1})}$. This can be proved by exhausting $\mathbb R_+^{n+2}$ by bounded domains $\Omega_j=\{(x,x_{n+1},t):\ (x,t)\in Q_j(0,0),\ 0<x_{n+1}<j\}$, $j\in \mathbb Z_+$, and by constructing $u$ as the limit of $\{u_j\}$ where $\mathcal{H}u_j=0$ in $\Omega_j$ and for $u_j$ having appropriate boundary data on the parabolic boundary of $\Omega_j$. By the Riesz representation theorem there exists a family of regular Borel measures $\{\omega^{(X,t)}:\ (X,t)\in \mathbb R^{n+2}_+\}$ on $\mathbb R^{n+1}$, which we call $\mathcal{H}$-caloric or $\mathcal{H}$-parabolic measures, such that
               $$u(X,t)=\int_{\mathbb R^{n+1}}f(y,s)\, d\omega^{(X,t)}(y,s),$$
               whenever $(X,t)\in \mathbb R^{n+2}_+$. Let $G(X,t,Y,s)$ be the Green function associated to the operator $\mathcal{H}$ in $\mathbb R^{n+2}_+$. Then,
\begin{eqnarray*}
(\partial_t+\mathcal{L}_{X,t})G(X,t,Y,s)=\delta_{(0,0)}(X-Y,t-s),
\end{eqnarray*}
and
\begin{eqnarray}\label{G2--}(-\partial_s+\mathcal{L}_{Y,s})G(X,t,Y,s)=\delta_{(0,0)}(X-Y,t-s).
\end{eqnarray}
The following argument, and hence the proof of Theorem \ref{parabolicm}, relies on the assumption in  \eqref{eq4} in a crucial way. Using that $A$ is assumed smooth it follows that the solution to the Dirichlet problem $\mathcal{H}u=0$ in $\mathbb R^{n+2}_+$, $u=f$ on $\mathbb R^{n+1}$, equals
\begin{eqnarray*}
u(X,t)=\int_{\mathbb R^{n+1}} K(X,t,y,s)f(y,s)\, dyds,
\end{eqnarray*}
where
\begin{eqnarray*}
K(X,t,y,s)&:=&\langle \nabla_YG(X,t,Y,s),A(Y)e_{n+1}\rangle|_{y_{n+1}=0}\notag\\
&=&a_{n+1,n+1}(y)\partial_{y_{n+1}}G(X,t,Y,s)|_{y_{n+1}=0}.
\end{eqnarray*}
Using \eqref{eq3} we see that $a_{n+1,n+1}$ is uniformly bounded from below. Let $Q\subset\mathbb R^{n+1}$ be a parabolic cube and let
 $A_Q:=(X_Q,t_Q):=(x_Q,l(Q),t_Q)$, where $(x_Q,t_Q)$ is the center of the cube and $l(Q)$ defines its size.
 We write $Q=\hat Q\times (t_Q-l(Q)^2/2,t_Q+l(Q)^2/2)$ where $\hat Q\subset\mathbb R^{n}$ is a  (elliptic) cube in the space variables only. Then
   \begin{eqnarray}\label{G6}
  \int_{Q}(K(A_Q,y,s))^2\, dyds&=&\int_{t_Q-l(Q)^2/2}^{t_Q+l(Q)^2/2}\int_{\hat Q}(K(X_Q,t_Q,y,s))^2\, dyds\notag\\
  &=&\int_{-l(Q)^2/2}^{l(Q)^2/2}\int_{\hat Q}(K(X_Q,0,y,s))^2\, dyds\notag\\
  &=&\int_{-l(Q)^2/2}^{l(Q)^2/2}\int_{\hat Q}(K(X_Q,0,y,-s))^2\, dyds,
\end{eqnarray}
by the translation invariance in the time-variable due to \eqref{eq4}. Using the Harnack inequality we see that
  \begin{eqnarray}\label{G5}
  (K(X_Q,0,y,-s))^2\leq c K(X_Q,0,y,-s)K(X_Q,16l(Q)^2,y,s),
\end{eqnarray}
whenever $(y,s)\in \hat Q\times [-l(Q)^2/2,l(Q)^2/2]$.  Let $$\phi\in C_0^\infty
(\mathbb R^{n+2}\setminus\bigl(\{(X_Q,0)\}\cup\{(X_Q,16l(Q)^2)\}\bigr)$$ be such that
  \begin{eqnarray}\label{G5ap}
  \phi(y,y_{n+1},s)=1,
\end{eqnarray}
whenever $(y,y_{n+1},s)\in \hat Q\times [-l(Q)/16,l(Q)/16]\times [-l(Q)^2/2,l(Q)^2/2]$, and
  \begin{eqnarray}\label{G5ap+}
  \phi(y,y_{n+1},s)=0,
\end{eqnarray}
whenever $(y,y_{n+1},s)\in \mathbb R^{n+2}\setminus\bigl(2\hat Q\times [-l(Q)/8,l(Q)/8]\times [-l(Q)^2,l(Q)^2]\bigr)$. Furthermore, we choose $\phi$ so that
  \begin{eqnarray}\label{G5ap++}
   \ |\nabla_Y\phi(Y,s)|\leq cl(Q)^{-1},\ |\partial_s\phi(Y,s)|\leq cl(Q)^{-2},
\end{eqnarray}
whenever $(Y,s)\in  \mathbb R^{n+2}$. Let
$\Psi(Y,s):= \phi(Y,s)\partial_{y_{n+1}}v(Y,s)$, where $$v(Y,s):=G(X_Q,0,Y,-s),$$ and let $$\tilde v(Y,s):=G(X_Q,16l(Q)^2,Y,s).$$ Using \eqref{G2--}  we see that
\begin{eqnarray*}\label{G4a}
0&=&\int_{\mathbb R^{n+21}_+} \bigl ((-\partial_s+\mathcal{L}_{Y,s})G(X_Q,16l(Q)^2,Y,s)\bigr)\Psi(Y,s)\, dYds\notag\\
&=&\int_{\mathbb R^{n+2}_+} \bigl ((-\partial_s+\mathcal{L}_{Y,s})\tilde v(Y,s)\bigr)\Psi(Y,s)\, dYds.
\end{eqnarray*}
Using this identity, and integrating by parts, we see that
\begin{eqnarray}\label{G4b}
I&:=&\int_{\mathbb R^{n+1}} \Psi(Y,s)|_{y_{n+1}=0}K(X_Q,16l(Q)^2,y,s)\, dyds\notag\\
&=&\int_{\mathbb R^{n+2}_+} \bigl ((\partial_s+\mathcal{L}_{Y,s})\Psi(Y,s)\bigr )\tilde v(Y,s)\, dYds.
\end{eqnarray}
We will now use the identity in \eqref{G4b} to prove Theorem \ref{parabolicm}. Indeed,
\begin{eqnarray*}
(\partial_s+\mathcal{L}_{Y,s})\Psi&=&\partial_s\Psi-\div(A\nabla_Y\Psi)\notag\\
&=&\partial_{y_{n+1}}v\partial_s\phi-\div((\partial_{y_{n+1}}v)A\nabla_Y\phi)-A\nabla_Y \partial_{y_{n+1}}v\cdot\nabla_Y\phi\notag\\
&&+\phi(\partial_s\partial_{y_{n+1}}v-\div(A\nabla_Y\partial_{y_{n+1}}v)).
\end{eqnarray*}
The key observation is, as $A$ is independent of $y_{n+1}$, that
\begin{eqnarray*}
\partial_s\partial_{y_{n+1}}v-\div(A\nabla_Y\partial_{y_{n+1}}v)&=&\partial_{y_{n+1}}\bigl (\partial_sv-\div(A\nabla_Yv)\bigr )\notag\\
&=&\partial_{y_{n+1}}\bigl (((-\partial_s+\mathcal{L}_Y)G)(X_Q,0,Y,-s)\bigr )=0,
\end{eqnarray*}
on the support of $\phi$. This is due to the presence of the minus sign in front of $s$ in $G(X_Q,0,Y,-s)$. Hence, using \eqref{G4b} and elementary manipulations, we see that
\begin{eqnarray*}
I=I_1+I_2-I_3.
\end{eqnarray*}
where
\begin{eqnarray*}\label{G4f}
I_1&:=&\int_{\mathbb R^{n+2}_+} \partial_{y_{n+1}}G(X_Q,0,Y,-s)(\partial_s\phi(Y,s))G(X_Q,16l(Q)^2,Y,s)\, dYds,\notag\\
I_2&:=&\int_{\mathbb R^{n+2}_+} \partial_{y_{n+1}}G(X_Q,0,Y,-s)(A\nabla_Y\phi)\cdot\nabla_YG(X_Q,16l(Q)^2,Y,s)\, dYds,\notag\\
I_3&:=&\int_{\mathbb R^{n+2}_+} (A\nabla_Y \partial_{y_{n+1}}G(X_Q,0,Y,-s)\cdot\nabla_Y\phi)G(X_Q,16l(Q)^2,Y,s)\, dYds.
\end{eqnarray*}
Recall that $\phi$ satisfies \eqref{G5ap}-\eqref{G5ap++} and let $E=\mathbb R^{n+2}_+\cap\overline{\{(Y,s): \phi(Y,s)\neq 0\}}$. Using this,
\begin{eqnarray*}
|I_1|&\leq&cl(Q)^{-2}\int_{E} |\partial_{y_{n+1}}G(X_Q,0,Y,-s)||G(X_Q,16l(Q)^2,Y,s)|\, dYds,\notag\\
|I_2|&\leq&cl(Q)^{-1}\int_{E} |\partial_{y_{n+1}}G(X_Q,0,Y,-s)||\nabla_YG(X_Q,16l(Q)^2,Y,s)|\, dYds,\notag\\
|I_3|&\leq&cl(Q)^{-1}\int_{E} |\nabla_Y \partial_{y_{n+1}}G(X_Q,0,Y,-s)||G(X_Q,16l(Q)^2,Y,s)|\, dYds.
\end{eqnarray*}
Hence, using energy estimates and Gaussian bounds for the fundamental solution  we deduce
\begin{eqnarray*}
|I|\leq |I_1|+|I_2|+|I_3|\leq c|Q|^{-1}.
\end{eqnarray*}
Using this and \eqref{G4b} we see that
   \begin{eqnarray*}
\int_{-l(Q)^2/2}^{l(Q)^2/2}\int_{\hat Q} K(X_Q,0,y,-s)K(X_Q,16l(Q)^2,y,s)\, dyds\leq  c|Q|^{-1}.
\end{eqnarray*}
Hence, using  \eqref{G6} and \eqref{G5} we can conclude that   \begin{eqnarray}\label{G6ed}
  \int_{Q}(K(A_Q,y,s))^2\, dyds\leq  c|Q|^{-1},
\end{eqnarray}
whenever $Q\subset\mathbb R^{n+1}$ is a parabolic cube, for a constant $c\geq 1 $, depending only on $n$ and $\Lambda$.  Put together  Theorem \ref{parabolicm} follows.\\

     \noindent
{\it Acknowledgement}. The authors thank an anonymous referee for a very careful reading of the paper and for valuable suggestions.

\end{document}